\begin{document}

\newtheorem{theorem}{Theorem}[section]
\newtheorem{lemma}[theorem]{Lemma}
\newtheorem{lm}[theorem]{Lemma}
\newtheorem{corollary}[theorem]{Corollary}
\newtheorem{conjecture}[theorem]{Conjecture}
\newtheorem{cor}[theorem]{Corollary}
\newtheorem{proposition}[theorem]{Proposition}
\newtheorem{prop}[theorem]{Proposition}
\theoremstyle{definition}
\newtheorem{definition}[theorem]{Definition}
\newtheorem{example}[theorem]{Example}
\newtheorem{claim}[theorem]{Claim}
\newtheorem{remark}[theorem]{Remark}


\newcommand{\R}{\mathbb{R}}
\newcommand{\C}{\mathbb{C}}
\newcommand{\Z}{\mathbb{Z}}
\newcommand{\Q}{\mathbb{Q}}
\newcommand{\E}{\mathbb E}
\newcommand{\N}{\mathbb N}
\newcommand{\X}{\mathbf X}

\newcommand{\del}{\ensuremath{\partial}}
\newcommand{\Def}{\ensuremath{:=}}

\newcommand{\TODO}[1]{\textbf{TODO:}{#1}}


\renewcommand{\Pr}{\mathbb{P}}
\newcommand{\as}{\text{a.s.}}
\newcommand{\Prob}{\Pr}
\newcommand{\Exp}{\mathbb{E}}
\newcommand{\expect}{\Exp}
\newcommand{\1}{\mathbf{1}}
\newcommand{\prob}{\Pr}
\newcommand{\pr}{\Pr}
\newcommand{\filt}{\mathscr{F}}
\DeclareDocumentCommand \one { o }
{%
\IfNoValueTF {#1}
{\mathbf{1}  }
{\mathbf{1}\left\{ {#1} \right\} }%
}
\newcommand{\Bernoulli}{\operatorname{Bernoulli}}
\newcommand{\Binomial}{\operatorname{Binom}}
\newcommand{\Beta}{\operatorname{Beta}}
\newcommand{\Binom}{\Binomial}
\newcommand{\Poisson}{\operatorname{Poisson}}
\newcommand{\Exponential}{\operatorname{Exp}}
\newcommand{\Unif}{\operatorname{Unif}}
\newcommand{\lawequals}{\overset{\mathcal{L}}{=} }


\newcommand{\Deg}{\operatorname{deg}}
\DeclareDocumentCommand \vso { o }
{%
\IfNoValueTF {#1}
{\mathcal{V}  }
{\mathcal{V}\left( {#1} \right) }%
}

\DeclareDocumentCommand \deg { O{ } }
{ \operatorname{deg}_{ #1 }}
\newcommand{\oneE}[2]{\mathbf{1}_{#1 \leftrightarrow #2}}
\newcommand{\ebetween}[2]{{#1} \leftrightarrow {#2}}
\newcommand{\noebetween}[2]{{#1} \centernot{\leftrightarrow} {#2}}
\newcommand{\Gap}{\ensuremath{\tilde \lambda_2 \vee |\tilde \lambda_n|}}
\newcommand{\dset}[2]{\ensuremath{ e({#1},{#2})}}


\newcommand{\Htwo}{ \mathbb{H}}
\newcommand{\Etwo}{ \mathbb{E}}
\newcommand{\GWT}{ \mathcal{T}}
\newcommand{\HB}{ B_{\Htwo}}
\newcommand{\MB}{ B_{\mathbb{M}}}
\newcommand{\dH}{ d_{\Htwo}}
\newcommand{\dM}{ d_{\mathbb{M}}}
\newcommand{\diamH}{ \operatorname{diam}_{\Htwo} }
\newcommand{\dE}{d_{\Etwo}}
\DeclareDocumentCommand \Vol { O{G} }{ \operatorname{Vol}_{#1}}
\newcommand{\VolH}{ \operatorname{Vol}_{\Htwo}}
\newcommand{\VolE}{\operatorname{Vol}_{\Etwo}}
\newcommand{\VolV}{\operatorname{Vol}_{\HPV}}
\newcommand{\VolD}{\operatorname{Vol}_{\HPD}}
\newcommand{\VolM}{ \operatorname{Vol}_{\Htwo}}
\newcommand{\convH}{ \operatorname{conv}_{\Htwo}}
\newcommand{\HPV}{ \mathscr{V}^\lambda}
\newcommand{\HPD}{ \mathscr{D}^\lambda}
\newcommand{\GF}{\mathcal{G}_f}
\newcommand{\DeltaH}{\Delta_{\Htwo}}
\newcommand{\DeltaE}{\Delta_{\Etwo}}
\newcommand{\angleH}{\angle_{\Htwo}}
\newcommand{\angleE}{\angle_{\Etwo}}

\newcommand{\CDisc}{ \operatorname{CD}_{\Htwo}}
\newcommand{\CCtr}{ \operatorname{CC}_{\Htwo}}

\newcommand{\PPP}{ \Pi^\lambda }

\DeclareDocumentCommand \TFX { O{i} O{\pi_{\X}} O{f}  }
{ \Delta_{ {#2}, {#3} }({#1})}

\DeclareDocumentCommand \Filt { O{n} }
{ \mathscr{F}_{{#1}} }

\DeclareDocumentCommand \isol { O{i} O{S} }
{ \Delta_{#1} \left( {#2} \right) }

\newcommand{\aec}{i^*}
\DeclareDocumentCommand \islands {  O{i} }
{ \operatorname{IS}_{ {#1 }} }

\newcommand{\BHF}{ \mathcal{B}}

\title[Anchored expansion and Poisson Voronoi]{Anchored expansion, speed, and the hyperbolic Poisson Voronoi tessellation}
\author{Itai Benjamini}
\address{Department of Mathematics, Weizmann Institute of Science}
\email{itai.benjamini@weizmann.ac.il}
\author{Elliot Paquette}
\address{Department of Mathematics, Weizmann Institute of Science}
\email{paquette@weizmann.ac.il}
\author{Joshua Pfeffer}
\address{Harvard University}
\email{pfeffer@college.harvard.edu}
\thanks{
EP gratefully acknowledges the support of NSF Postdoctoral Fellowship DMS-1304057.
This work was largely performed while JP was a participant in Kupcinet-Getz Summer science school at WIS.
}
\date{\today}
\maketitle

\begin{abstract}
  We show that random walk on a stationary random graph with positive anchored expansion and exponential volume growth has positive speed.  We also show that two families of random triangulations of the hyperbolic plane, the hyperbolic Poisson Voronoi tessellation and the hyperbolic Poisson Delaunay triangulation, have $1$-skeletons with positive anchored expansion.  As a consequence, we show that the simple random walks on these graphs have positive speed.  We include a section of open problems and conjectures on the topics of stationary geometric random graphs and the hyperbolic Poisson Voronoi tessellation.
\end{abstract}

\section{Introduction}
\label{sec:anch}
A rooted, locally finite, unlabeled random graph $(G,\rho)$ is called \emph{stationary} if the distribution of $(G,\rho)$ is the same as the distribution of $(G,X_1)$ where $X_1$ is simple random walk on $G$ started from $\rho$ after $1$ step.  Such a graph is called \emph{reversible} if in addition the birooted equivalence class $(G, X_0, X_1)$ has the same distribution as $(G,X_1,X_0).$  Stationary random graphs enjoy many of the same properties of transitive graphs, which they generalize.  For example, in \cite{BenjaminiCurien} it is shown that a stationary random graph of subexponential growth is almost surely Liouville.  In \cite{Lasso}, the converse is shown under the additional assumption of ergodicity.

The property of reversibility is closely tied to the more familiar notion of unimodularity.  Let $P$ be the law of a unimodular random graph $(G,\rho)$ for which $\Exp \deg(\rho) < \infty.$  Let $Q$ be the law on rooted graphs which is absolutely continuous to $P$ and has Radon-Nikodym derivative $\frac{dQ}{dP} = \frac{\deg(\rho)}{\Exp \deg(\rho)}.$  Then it can be checked that $P$ is unimodular if and only if $Q$ is reversible (see \cite[Proposition 2.5]{BenjaminiCurien}).  Hence statements that hold almost surely for $P$ hold almost surely for $Q,$ and provided the degree of $\rho$ is almost surely positive, statements that hold almost surely for $Q$ hold almost surely for $P.$  

For a finite subset of vertices $S \subseteq \vso[G],$ let $\Vol(S)$ be the sum of degrees of the vertices of $S,$ and let $|\del S|$ denote the number of edges that have exactly one terminus in $S.$  A locally finite connected, rooted graph $(G,\rho)$ is said to have \emph{positive anchored expansion} if
\begin{equation}
	\aec(G) \Def \liminf_{\substack{|S| \to \infty \\ \rho \in S \\ G\vert_S \text{connected}}} \frac{|\del S|}{\Vol(S)} > 0.
	\label{eq:ae}
\end{equation}
Note that, in a connected graph, $\aec(G)$ is independent of the root chosen.  
This definition is a natural relaxation of the condition of having a positive edge isoperimetric constant, in which the $\liminf$ is replaced by the infimum over all finite $S.$ 

There are many examples of random graph processes that fail to have positive edge isoperimetric constant but do have positive anchored expansion, such as 
supercritical Galton-Watson trees conditioned on nonextinction~\cite{LyonsPemantlePeres}.
A key feature of positive anchored expansion is that it is stable under random perturbations: for $p$ sufficiently large, $p$-Bernoulli bond or site percolation on a graph with positive anchored expansion gives a graph with positive anchored expansion ~\cite{BenjaminiLyonsSchramm,BLPS,ChenPeres}.

There are perhaps three fundamental known consequences of positive anchored expansion.  The earliest follows from Thomassen~\cite{Thomassen}, who gives a general criterion on the isoperimetric profile of a graph that implies that simple random walk is transient; as a special case, his result shows that positive anchored expansion implies that simple random walk is transient.  The second, due to Vir\'ag~\cite{Virag}, is that under the additional assumption of bounded degree, simple random walk almost surely has positive liminf speed, i.e.,
\begin{equation*}
	\liminf_{k \to \infty} \frac{d(\rho,X_k)}{k} > 0, 
	\label{eq:positivespeed}
      \end{equation*}
where $d$ is the graph metric.  
The third result, also due to Vir\'ag~\cite{Virag}, is a heat kernel bound that states that, for every vertex $x$, there is an $N$ so that for all $n > N$ and all vertices $y$,
\begin{equation}
  \label{eq:hk}
	p^n(x,y) < e^{-\alpha n^{1/3}}.
      \end{equation}
Further consequences of positive anchored expansion, such as the existence of a phase transition in the Ising model with nonzero external field are shown in \cite{HaggstromSchonmannSteif}.

We will show that a stationary graph with positive anchored expansion and exponential growth has positive speed, thus effectively trading the bounded degree assumption of~\cite{Virag} for the assumptions of stationarity and exponential growth.
Let $B(\rho,r)$ denote the closed ball of radius $r$ around $\rho$ in the graph metric.
\begin{theorem}
  \label{thm:speed}
  Let $(G,\rho)$ be a stationary random graph so that:
  \begin{enumerate}
    \item $(G,\rho)$ has positive anchored expansion almost surely and
    \item $\limsup_{r \to \infty} |B(\rho,r)|^{1/r} < \infty$ almost surely.
  \end{enumerate}
  Then simple random walk $X_k$ started from $\rho$ has positive speed, i.e., the limit 
  \[
	s = \lim_{k \to \infty} \frac{d(\rho,X_k)}{k} 
  \]
  exists and $s >0$ almost surely.
\end{theorem}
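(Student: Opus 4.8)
The plan is to establish the two assertions separately: the limit $s$ exists using only stationarity, while positivity uses both hypotheses.

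\emph{Existence of the speed.} First I would note that since $(G,\rho)$ is stationary, the sequence of environments seen from the walk, $(G,X_n)_{n\ge 0}$, is a stationary process, and $f_n:=d(\rho,X_n)$ is a subadditive cocycle over the time shift $\theta$: by the triangle inequality $f_{m+n}\le f_m+f_n\circ\theta^m$, since $f_n\circ\theta^m=d(X_m,X_{m+n})$. As $0\le f_1\le 1$ and $0\le f_n\le n$, Kingman's subadditive ergodic theorem gives that $s:=\lim_n n^{-1}d(\rho,X_n)$ exists almost surely and in $L^1$, with $s$ measurable for the invariant $\sigma$-field (a constant if the system is ergodic). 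Neither hypothesis is needed here.

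\emph{Reducing positivity to positive entropy.} Let $H_n:=H(X_n\mid G)$ be the conditional Shannon entropy of $X_n$ given the graph; subadditivity $H_{m+n}\le H_m+H_n$ yields the Avez entropy $h:=\lim_n H_n/n$. The fundamental inequality $h\le s\,v$, where $v:=\limsup_r r^{-1}\log|B(\rho,r)|$, follows from $H_n\le H(d(\rho,X_n)\mid G)+\E[\log|B(\rho,d(\rho,X_n))|]\le\log(n+1)+(v+o(1))\,\E[d(\rho,X_n)]$ (after conditioning on the invariant field, or passing to an ergodic component, so that $v$ may be treated as constant). Hypothesis (2) says $v<\infty$ almost surely, so it suffices to prove $h>0$ almost surely, and then $s\ge h/v>0$.

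\emph{Positive entropy from anchored expansion.} This is where hypothesis (1) enters, and it is the step I expect to be the crux. Two consequences of anchored expansion are worth recording. Applying \eqref{eq:ae} to the balls, $|\del B(\rho,k)|\ge \aec(G)\,\Vol(B(\rho,k))$ for all large $k$; since $\del B(\rho,k)$ consists of edges joining the spheres $S(\rho,k)$ and $S(\rho,k+1)$, this forces $\Vol(B(\rho,k+1))\ge(1+\aec(G))\,\Vol(B(\rho,k))$, i.e. at-least-exponential volume growth, and (as in Thomassen's transience criterion, applied to the nested cutsets $\del B(\rho,k)$) it yields a lower bound on the escape probability $p_0:=\Pr_\rho(\text{SRW from }\rho\text{ never returns})$ depending only on $\aec(G)$ and $\operatorname{deg}(\rho)$; by stationarity the walk from any vertex escapes with probability bounded below, so $G(v,v)\le C$ uniformly and each vertex is visited $O(1)$ times in expectation. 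To deduce $h>0$ I would show the Poisson boundary is non-trivial: the uniform Green's-function bound forces the occupation times over the range $\mathcal R_N=\{X_0,\dots,X_N\}$ to be nearly flat, so $|\mathcal R_N|\asymp N$ and the walk's empirical measure on $\mathcal R_N$ is comparable to the degree measure $\Vol(\cdot)/\Vol(\mathcal R_N)$; since anchored expansion makes $|\del\mathcal R_N|\ge\aec(G)\,\Vol(\mathcal R_N)$, from its degree-typical location the walk crosses the current frontier at rate bounded below, which — iterated over scales — prevents the trajectory from becoming asymptotically independent of its start and yields a non-trivial tail $\sigma$-field. Alternatively, one may reduce to the ergodic case and invoke the converse Liouville theorem of \cite{Lasso}: anchored expansion and the bound in (2) are almost-sure properties, hence pass to almost every ergodic component, where the exponential volume growth forced by (1) precludes the Liouville property, so $h>0$ there as well.

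\emph{Conclusion and the main difficulty.} Combining $h>0$, $v<\infty$ and the fundamental inequality gives $s\ge h/v>0$ almost surely. The obstacle is precisely the step $h>0$: anchored expansion only constrains finite sets containing the root, while exponential volume growth is a priori an \emph{obstruction} rather than an aid — any estimate phrased in terms of metric balls is consistent with the walk spending time $N$ thoroughly exploring a ball of radius $\Theta(\log N)$. The resolution is to transfer the expansion estimate to the set of vertices the walk actually visits, where the occupation measure is controlled; making this rigorous, uniformly across scales and without a degree bound, is the heart of the argument.
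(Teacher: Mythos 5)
Your first step (existence of the limit via Kingman's theorem applied to the subadditive cocycle $d(\rho,X_n)$ over the environment seen from the walk) is correct and is exactly how the paper gets existence. The reduction of positivity to positive Avez entropy via $h\le s\,v$ is a legitimate strategy in principle, but the step you yourself flag as the crux --- $h>0$ --- is not established, and neither of your two routes to it works as stated. The occupation-measure route has a concrete false step at its base: stationarity gives that the escape probability from $X_n$ is \emph{distributed} like that from $\rho$, not that it is bounded below uniformly over vertices. The quantitative escape bound you extract from the cutsets $\del B(v,k)$ depends on how large a set must be before the $\liminf$ in \eqref{eq:ae} takes effect at $v$, which is vertex-dependent and admits no uniform control; so the uniform Green's function bound, and with it $|\mathcal{R}_N|\asymp N$ and the ``degree-typical location'' claim, do not follow. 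Even granting them, ``crosses the frontier at rate bounded below, iterated over scales'' is a heuristic, not an argument for a non-trivial tail $\sigma$-field. The fallback via \cite{Lasso} rests on a false implication: the result there is the equivalence of the Liouville property with \emph{zero entropy} for ergodic stationary random graphs, not with subexponential growth, and exponential volume growth does not preclude the Liouville property (lamplighter groups over $\Z$ are ergodic stationary random graphs of exponential growth that are Liouville). So that route cannot give $h>0$ either.

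For contrast, the paper avoids entropy entirely and works directly with displacement, following Vir\'ag: for small $i>0$ one removes the union $\islands$ of all isolated $i$-cores; what remains has a genuine (non-anchored) edge-isoperimetric constant at least $i$, so the induced walk $Y_k=X_{T_k}$ on $G\setminus\islands$ satisfies $\sup_y\Pr[Y_k=y]\le q^k$ with $q<1$. Stationarity enters exactly where bounded degree entered in Vir\'ag's argument: by the ergodic theorem the fraction of time the walk spends in $G\setminus\islands$ converges to $\Pr[\rho\in G\setminus\islands\mid\filt]$, which is shown to be positive (otherwise transience would force $\rho$ to lie in infinitely many connected isolated $i$-cores, contradicting Proposition~\ref{prop:basics}), so $T_n/n$ converges to a finite constant. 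The volume hypothesis is then used only for a union bound, $\Pr[d(Y_k,\rho)\le ck]\le M^{ck}q^k$, and Borel--Cantelli gives linear displacement along the times $T_k$, hence along all times. If you want to salvage your outline, some mechanism of this kind is needed in place of the entropy step.
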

\noindent Note that further assumptions are needed to say that $s$ is non-random.

For certain classes of graphs, positive speed is enough to ensure the existence of nonconstant bounded harmonic functions (e.g. planar, bounded degree graphs \cite{BenjaminiSchramm96}).  We show that a stationary random graph with nonconstant bounded harmonic functions must have an infinite-dimensional space of such functions.
\begin{theorem}
  A stationary random graph $(G,\rho)$ either has a $1$-dimensional space of bounded harmonic functions or an infinite-dimensional space of bounded harmonic functions.
	\label{thm:harmonics}
\end{theorem}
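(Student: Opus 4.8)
The plan is to rule out the intermediate case: I will show that $2\le \dim H^\infty(G)<\infty$ can occur only with probability zero, where $H^\infty(G)$ denotes the space of bounded harmonic functions on $G$. Since $H^\infty(G)$, and hence its dimension, is an isomorphism invariant of the unrooted graph, the events $E_d:=\{\dim H^\infty(G)=d\}$ are $\sigma(G)$-measurable and therefore invariant under moving the root; in particular, conditioning the law of $(G,\rho)$ on $E_d$ preserves stationarity. Fix $d\ge 2$ and condition on $E_d$, so that $\dim H^\infty(G)=d$ almost surely. By standard Poisson boundary theory, $H^\infty(G)$ is isomorphic to $L^\infty$ of the Poisson (equivalently, tail) boundary of simple random walk on $G$ with the harmonic measure from any vertex, so a $d$-dimensional $H^\infty(G)$ forces this boundary to consist of exactly $d$ atoms $B_1,\dots,B_d$. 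Let $\xi\in\{1,\dots,d\}$ be the (tail-measurable) label of the atom the walk from $\rho$ falls into, and set $h_i(v):=\Pr_v(\xi=i)$; these are nonnegative bounded harmonic functions, determined by $G$, with $\sum_i h_i\equiv 1$, and $\{h_1,\dots,h_d\}$ spans $H^\infty(G)$.

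The key step will be to use stationarity to show that each $h_i$ is unchanged by a single step of the walk. For each $i$, the process $M_n^{(i)}:=h_i(X_n)=\Pr(\xi=i\mid\mathscr{F}_n)$ is a bounded martingale, so its increments are orthogonal: $\Exp[(M_{n+1}^{(i)})^2]=\Exp[(M_n^{(i)})^2]+\Exp[(M_{n+1}^{(i)}-M_n^{(i)})^2]$. But $h_i(X_n)$ depends only on the rooted graph $(G,X_n)$, and iterating the stationarity of $(G,\rho)$ gives $(G,X_n)\lawequals(G,\rho)$, so $\Exp[(M_n^{(i)})^2]=\Exp[h_i(\rho)^2]\le 1$ is independent of $n$. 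Hence $\Exp[(M_{n+1}^{(i)}-M_n^{(i)})^2]=0$ for all $n$; taking $n=0$ yields $h_i(X_1)=h_i(\rho)$ almost surely, for each of the finitely many indices $i$.

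It then remains to upgrade this one-step identity to the statement that each $h_i$ is globally constant on $G$. Since $X_1$ is uniform on the neighbours of $\rho$, the identity above says that for a.e.\ $(G,\rho)$ and every $w\sim\rho$ one has $h_i(w)=h_i(\rho)$; equivalently $\Pr(\rho\in D_G)=0$, where $D_G:=\{v\in V(G): h_i(w)\ne h_i(v)\text{ for some }i\text{ and some }w\sim v\}$. I will spread this out: writing $\bar D_G^{(0)}:=D_G$ and $\bar D_G^{(n+1)}$ for the closed neighbourhood of $\bar D_G^{(n)}$, on the event $\{\rho\in\bar D_G^{(n+1)}\setminus\bar D_G^{(n)}\}$ the walk enters $\bar D_G^{(n)}$ in one step with conditional probability at least $1/\Deg\rho$, so $\Exp[\mathbf{1}_{\rho\in\bar D_G^{(n+1)}\setminus\bar D_G^{(n)}}/\Deg\rho]\le \Pr(X_1\in\bar D_G^{(n)})=\Pr(\rho\in\bar D_G^{(n)})$, again using $(G,X_1)\lawequals(G,\rho)$. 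Since $1/\Deg\rho>0$ always, an induction gives $\Pr(\rho\in\bar D_G^{(n)})=0$ for every $n$; but $\bigcup_n\bar D_G^{(n)}$ is closed under taking neighbours, hence a union of connected components of $G$, and so equals $V(G)$ on the event $\{D_G\ne\emptyset\}$ because $G$ is connected. Therefore $\Pr(D_G\ne\emptyset)\le\Pr(\rho\in\bigcup_n\bar D_G^{(n)})=0$, i.e.\ for a.e.\ $G$ every $h_i$ is constant; since the $h_i$ span $H^\infty(G)$, this forces $\dim H^\infty(G)\le 1$, contradicting $d\ge 2$. Hence $\Pr(E_d)=0$ for all $d\ge 2$, and $\Pr(\dim H^\infty(G)\in\{1,\infty\})=1$.

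The core probabilistic idea — a bounded martingale whose marginals all agree has vanishing increments, together with the environment viewed from the walk being stationary — is very short. I expect the main obstacles to be bookkeeping rather than conceptual: first, correctly invoking the Poisson-boundary identification $H^\infty(G)\cong L^\infty(\text{tail boundary})$ along with the measurability of $\dim H^\infty(G)$ and of the atomic decomposition $B_1,\dots,B_d$ as functions of $G$ (all standard, but to be stated with care); and second, the spreading argument above, where the possibility of unbounded degrees rules out a naive union bound and instead forces the weighting by $1/\Deg\rho$.
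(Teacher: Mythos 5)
Your argument is correct, and it reaches the conclusion by a genuinely different route from the paper. The paper's proof assumes $\dim\BHF(G)=d<\infty$, fixes a basis $f_1,\dots,f_d$, uses almost-sure convergence of the bounded martingales $f_j(X_k)$ to show that the $r$-step oscillation of every unit-ball harmonic function along the walk tends to $0$, and then invokes the ergodic theorem for the stationary sequence built from an equivariantly selected ``witness'' harmonic function $\varphi(X_k)$ to pull this asymptotic statement back to time $0$. You instead take the canonical spanning family $h_1,\dots,h_d$ coming from the atoms of the exit boundary and obtain the one-step identity $h_i(X_1)=h_i(\rho)$ \emph{exactly} at time $0$, from the Pythagorean identity for $L^2$ martingale increments together with $(G,X_n)\lawequals(G,\rho)$; you then globalize with the $1/\!\deg\rho$-weighted spreading argument, which correctly handles unbounded degrees. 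Your route dispenses with the ergodic theorem and with the paper's measurable-selection step (its choice of ``any arbitrary $h\in S$''), at the cost of importing the identification $\BHF(G)\cong L^\infty$ of the exit boundary and the measurability of $\{\dim\BHF(G)=d\}$, both of which you rightly flag as standard-but-to-be-checked. One bookkeeping point does need fixing: the labels of the atoms cannot in general be chosen equivariantly (an automorphism of $G$ may permute them), so $h_i(X_n)$ for a fixed $i$ is not a well-defined function of the rooted isomorphism class and the identity $\Exp[h_i(X_n)^2]=\Exp[h_i(\rho)^2]$ is not literally licensed by stationarity. The remedy is to run the second-moment computation on the symmetric quantity $\sum_{i=1}^d h_i(\cdot)^2$, which is isomorphism-invariant; summing the Pythagorean identities over $i$ and using that each increment term is nonnegative still forces every $\Exp[(h_i(X_1)-h_i(\rho))^2]$ to vanish, and your set $D_G$ is already permutation-invariant, so the spreading step goes through unchanged. (A minor imprecision: $\BHF(G)$ corresponds to the invariant/exit $\sigma$-field rather than the tail $\sigma$-field, which can differ for periodic walks; since your $\xi$ is shift-invariant this does not affect the proof.)
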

Work of~\cite{Lasso} also shows that under mild conditions on a stationary graph, positive speed implies the existence of an infinite-dimensional space of bounded harmonic functions.

\subsection*{Hyperbolic Poisson Voronoi tessellation}

Theorems~\ref{thm:speed} and \ref{thm:harmonics} are tailor-made for random graphs that arise as invariant random perturbations of homogeneous spaces, such as invariant percolation on non-amenable Cayley graphs.    We will show how this framework can be applied to a random discretization of Riemannian symmetric space.  Specifically, we will consider the hyperbolic plane $\Htwo.$

Let $\PPP$ denote a Poisson point process on $\Htwo$ with intensity given by a multiple $\lambda > 0$ of its invariant volume measure, which is conditioned to have a point at $0$.  The exact normalization for the area measure is given at the start of Section~\ref{sec:hpv}.  

The hyperbolic Poisson Voronoi tessellation of $\Htwo$ is a polygonal cell complex, with each cell containing exactly one point in $\PPP$; this point is referred to as the \emph{nucleus} of the cell. For a point $p_0 \in \PPP$, the cell with nucleus $p_0$ is given by 
\[
\left\{ z \in \Htwo : \dH(z,p_0) = \min_{p \in \PPP} \dH(z,p)\right\},
\]
where $\dH$ denotes distance in the hyperbolic metric.  

From the point process $\PPP$, we can also construct the Poisson Delaunay complex, a polygonal cell complex with vertices $\PPP$, and a hyperbolic geodesic between two vertices if and only if the corresponding Voronoi cells share a boundary edge.
See Figure~\ref{fig:simulation} for a simulation of these tessellations.

It is elementary to see that, in the Poisson Delaunay complex, all the faces are almost surely triangles; hence, we will refer to it as the Delaunay triangulation.  Further, it can be checked that a triple of points $\{x,y,z\}$ in $\PPP$ forms a triangle in the Delaunay triangulation if and only if all three points lie on the boundary of a finite hyperbolic disk in $\Htwo$ whose interior contains no points of $\PPP$.

\begin{figure}
  \begin{subfigure}[t]{0.45\textwidth}
    \includegraphics[width=\textwidth]{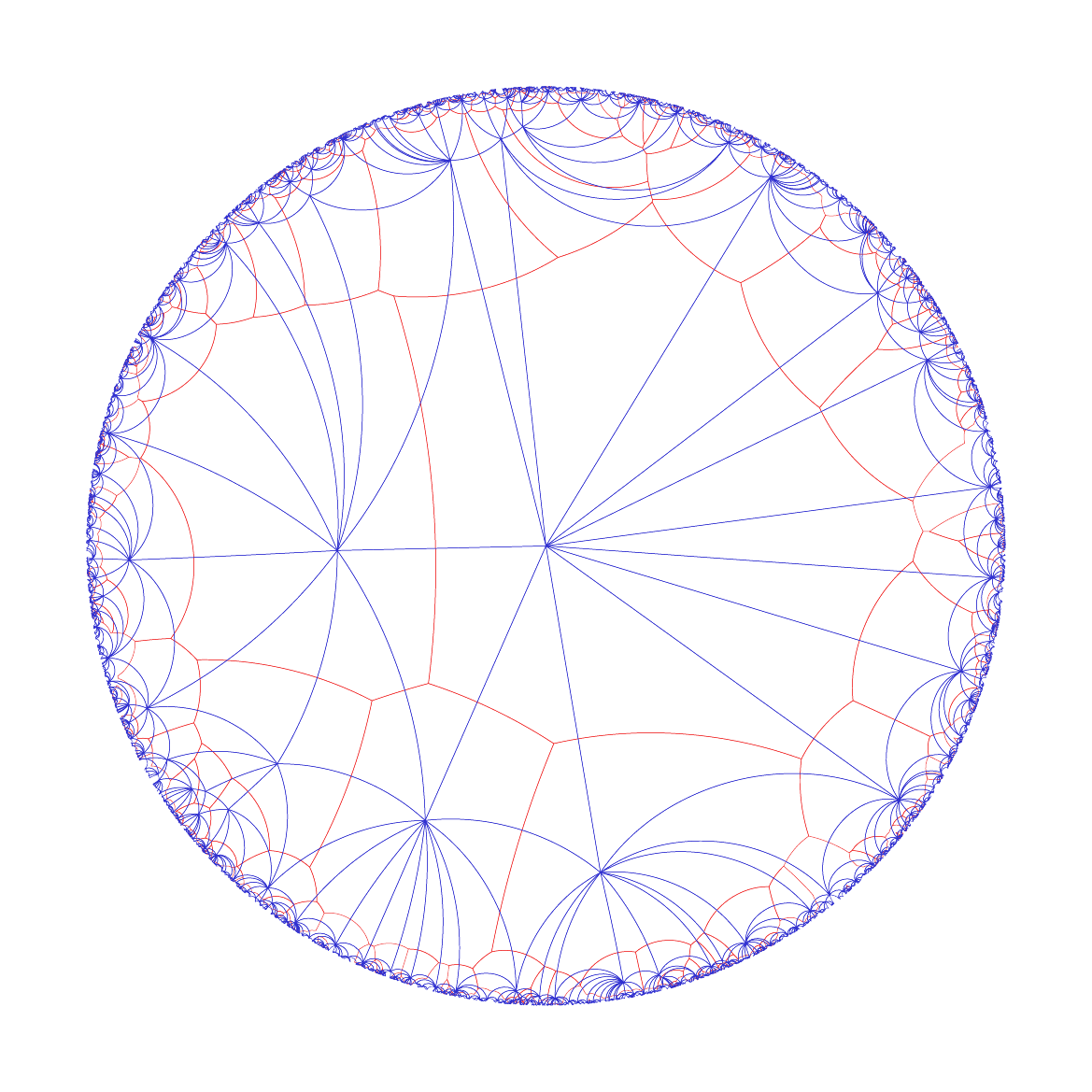}
    \caption{$\lambda=0.2$ and $r = 0.9995.$}
  \end{subfigure}
  \begin{subfigure}[t]{0.45\textwidth}
    \includegraphics[width=\textwidth]{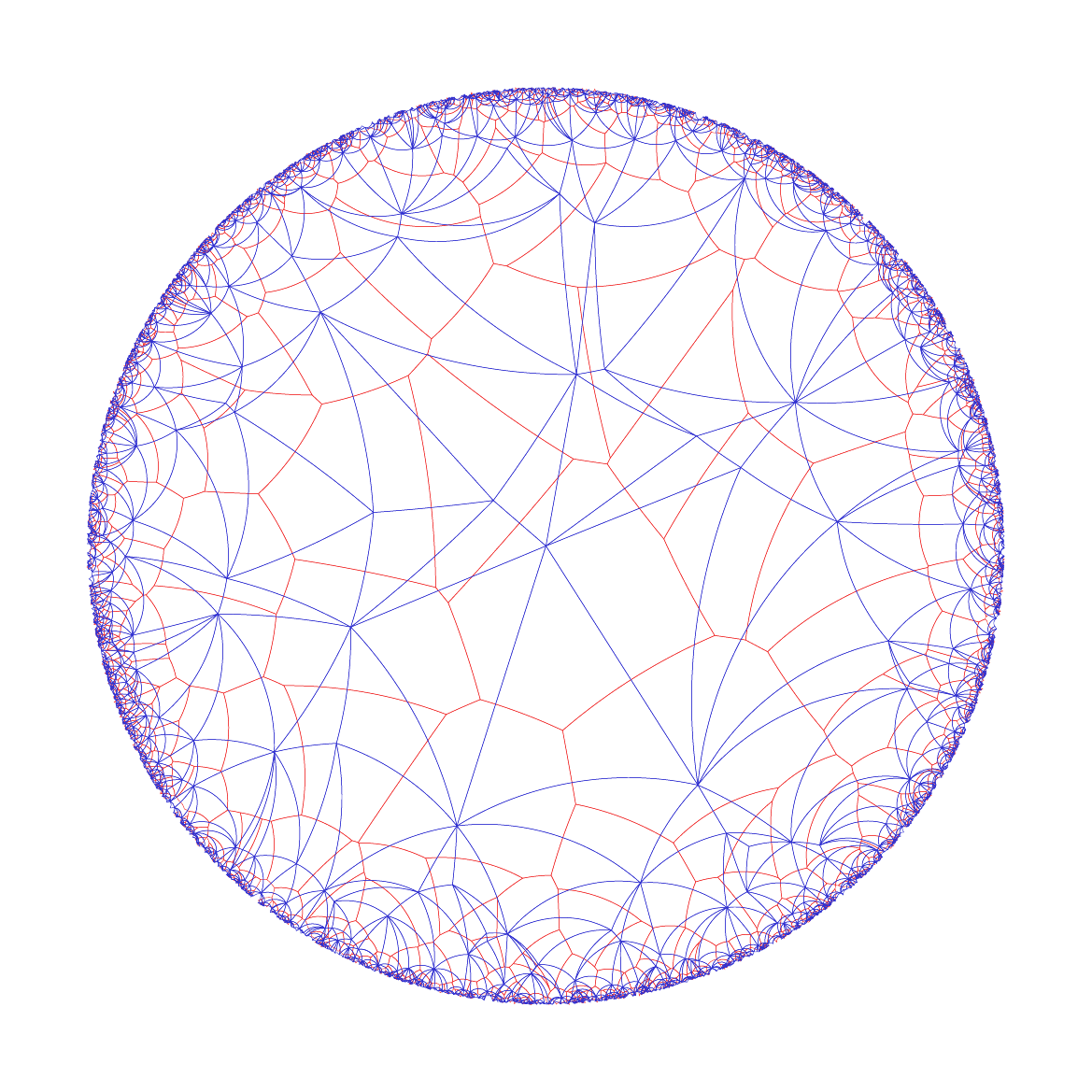}
    \caption{$\lambda = 1$ and $r=0.9975.$} 
  \end{subfigure}
  \caption{A rendering of the Poisson Voronoi tessellation (red) for various $\lambda$ with superimposed Delaunay triangulation (blue).  In each simulation, the process $\PPP_r$ on $\HB(0,r)$ is sampled with intensity $\lambda$ times hyperbolic area measure.  In electronic versions of this figure, it is possible to get more detail by zooming in on the picture.  Note that for smaller $\lambda$ the average degree increases.  
  }
  \label{fig:simulation}
\end{figure}

The probability that a disk in $\Htwo$ contains no points of $\PPP$ is exponentially small in the volume of the disk.  As the volume of a disk centered at a fixed point in $\Htwo$ grows exponentially in the radius, the maximal diameter of a Delaunay triangle incident to that point has a very sharp tail:
\begin{lemma}
	Let $S_0$ be the union of the Delaunay triangles with one vertex equal to $0.$  There is a constant $C>0$ so that for all $r > 0$
	\[
		\Pr\left[
			S_0 \not\subset \HB(0,r)	
		\right] \leq Ce^{3r/4 - \lambda \pi e^{r/4}},
	\]
	where $\HB(x,r)$ is the hyperbolic ball centered at $x$ of radius $r.$
	\label{lem:triangletail}
\end{lemma}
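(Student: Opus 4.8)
The plan is to invoke the stated characterization of Delaunay adjacency to reduce to a first-moment (Palm) computation, and then to estimate a single empty-disk probability. First I would reduce to the event that $0$ has a far Delaunay neighbour: every face of $S_0$ is a filled geodesic triangle, hence a convex subset of $\Htwo$, and $z\mapsto\dH(0,z)$ is convex along geodesics of $\Htwo$, so its maximum over each such face is attained at one of the three vertices. Thus $\{S_0\not\subset\HB(0,r)\}$ forces $0$ to be Delaunay-adjacent to some $v\in\PPP$ with $\dH(0,v)>r$; writing $0\sim v$ for this adjacency, the edge analogue of the triangle criterion above says $0\sim v$ holds exactly when there is an open hyperbolic disk carrying $0$ and $v$ on its boundary circle and containing no point of $\PPP$ in its interior. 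So it suffices to bound $\Pr[\exists v\in\PPP\setminus\{0\}:\dH(0,v)>r,\ 0\sim v]$.

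Next I would run Palm calculus. By Slivnyak's theorem $\PPP$ has the law of $\Pi_0\cup\{0\}$ for a non-conditioned Poisson process $\Pi_0$ of intensity $\lambda\VolH$, so this probability is at most the expected number of such $v$, which the Mecke equation evaluates as
\[
\lambda\int_{\dH(0,w)>r}\Pr\bigl[\,0\sim w\ \text{in}\ \mathrm{Del}(\Pi_0\cup\{0,w\})\,\bigr]\,d\VolH(w).
\]
For fixed $w$ with $\dH(0,w)=\rho$, let $m$ be the midpoint of $[0,w]$. If $0\sim w$, some $\Pi_0$-empty open disk $D=\HB(c,R)$ has $0,w\in\partial D$; then $c$ lies on the perpendicular bisector of $[0,w]$, the triangle $0mc$ is right-angled at $m$, and the hyperbolic Pythagorean theorem gives $\cosh R=\cosh(\rho/2)\cosh\dH(m,c)$. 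A short computation shows $\dH(m,c)+\rho/4\le R$ for every such $c$ once $\rho$ exceeds an absolute constant $\rho_0$ --- the inequality reduces to $\tanh\dH(m,c)\le\sinh(3\rho/8)/\cosh(\rho/8)$, whose right side exceeds $1$ for $\rho\ge\rho_0$. Hence $\HB(m,\rho/4)\subseteq D$ is $\Pi_0$-empty, so for $\rho\ge\rho_0$,
\[
\Pr[0\sim w]\le\exp\bigl(-\lambda\VolH(\HB(0,\rho/4))\bigr)\le e^{2\pi\lambda}e^{-\lambda\pi e^{\rho/4}},
\]
using $\VolH(\HB(0,s))=2\pi(\cosh s-1)\ge\pi e^{s}-2\pi$.

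Finally I would integrate. For $r\ge\rho_0$, in geodesic polar coordinates about $0$ the area element is $\sinh\rho\,d\rho\,d\theta\le\tfrac12 e^{\rho}\,d\rho\,d\theta$, and the substitution $u=e^{\rho/4}$ turns the bound into a constant multiple of $\int_{e^{r/4}}^{\infty}u^{3}e^{-\lambda\pi u}\,du$, which is $\le C_\lambda\,e^{3r/4}e^{-\lambda\pi e^{r/4}}$. For $r<\rho_0$ the claim is immediate after enlarging $C$, since the probability is at most $1$ while $e^{3r/4-\lambda\pi e^{r/4}}$ stays bounded below on $[0,\rho_0]$. I expect the geometric step in the second paragraph to be the main obstacle: one must show that \emph{every} disk witnessing $0\sim w$ contains one \emph{fixed} empty disk whose radius is a constant fraction of $\dH(0,w)$. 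The corresponding Euclidean statement is false for thin disks, and it is the exponential volume growth of $\Htwo$ --- manifested in $\sinh(3\rho/8)/\cosh(\rho/8)>1$ --- that makes it work here for all large $\rho$.
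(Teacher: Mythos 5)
Your argument is correct, and it reaches the stated bound by a genuinely different route from the paper's. The paper works deterministically with the circumdisk of an offending triangle: since that circumdisk has $0$ on its boundary and reaches outside $\HB(0,r)$, it contains an empty ball of radius $r/2$ centered on $\partial \HB(0,r/2)$; one then discretizes $\partial \HB(0,r/2)$ by a net of $O(e^{3r/4})$ points (sized via the hyperbolic law of cosines) and takes a union bound over the deterministic events that $\HB(s,r/4)$ is empty for $s$ in the net. You instead reduce to the existence of a far Delaunay neighbour (your convexity observation plays the role of the paper's ``the circumdisk reaches far'' step) and replace the deterministic net by a Mecke/Slivnyak first-moment computation over the random far point, supported by the per-pair geometric lemma that \emph{every} empty disk through $0$ and $w$ contains the fixed ball $\HB(m,\dH(0,w)/4)$. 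That lemma is right: with $t=\dH(m,c)$ and $\rho=\dH(0,w)$, the containment $\cosh(t+\rho/4)\le\cosh(\rho/2)\cosh t$ reduces exactly to $\tanh t\le \sinh(3\rho/8)/\cosh(\rho/8)$, and one can also see it directly since $R-t$ is decreasing in $t$ with limit $\log\cosh(\rho/2)\ge\rho/4$ once $\rho\ge 4\log 2$; so the step you flagged as the main obstacle does go through, with an explicit absolute $\rho_0$. It is worth noting that both proofs produce the prefactor $e^{3r/4}$ for different reasons: in the paper it is the (rather loose) cardinality bound on the net, while in yours it is the Jacobian $\sinh\rho\,d\rho$ after the substitution $u=e^{\rho/4}$. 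Your version avoids the net construction entirely and localizes the forced empty region at the deterministic midpoint $m$, which is arguably cleaner; the paper's version avoids Palm calculus and stays entirely within elementary union bounds.
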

Note that, by the transitivity of the automorphism group of $\Htwo$ and the invariance of the intensity of $\PPP,$ this lemma also holds if $0$ is replaced by any other fixed point in $\Htwo$ (see Remark~\ref{rem:same}). 

For both the Poisson Voronoi tessellation and the Poisson Delaunay triangulation, we consider the dual graph, in which each cell becomes a vertex of the dual graph, and two vertices of the dual graph are adjacent if and only if the corresponding cells of the tessellation share a boundary arc.  Let $\HPV$ and $\HPD$ denote the dual graphs of the hyperbolic Poisson Voronoi tessellation and the hyperbolic Poisson Delaunay triangulation, respectively.  Note that $\HPV$ is geometrically realized as the $1$-skeleton of the Delaunay complex, and $\HPD$ as the $1$-skeleton of the Voronoi tessellation. 
 Except where absolutely necessary, we will not distinguish between the abstract graphs $\HPV$ and $\HPD$ and their geometric realizations as $1$-skeletons.

Our main theorem about these tessellations is that both have positive anchored expansion almost surely.
\begin{theorem}
	For $G=\HPV$ or $\HPD$ there is a constant $c=c(\lambda) > 0$ so that 
	\(
		\aec(G) > c
	\)
	almost surely.
	\label{thm:hpv}
\end{theorem}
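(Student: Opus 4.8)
The plan is to transfer the linear isoperimetric inequality of the hyperbolic plane to $G$, handling $\HPV$ and $\HPD$ in parallel. Recall that $G$ is the $1$-skeleton of a tessellation of $\Htwo$ (the Delaunay triangulation if $G=\HPV$, the Voronoi tessellation if $G=\HPD$) dual to a tessellation $\mathcal T$ whose cells — Voronoi cells for $\HPV$, Delaunay triangles for $\HPD$ — are in bijection with the vertices of $G$, with adjacency of vertices corresponding to adjacency of the dual cells. To a finite connected vertex set $S\ni\rho$ I would associate the region $U=U(S)=\bigcup_{v\in S}(\text{cell of }\mathcal T\text{ dual to }v)\subset\Htwo$, which is connected; then $|\del S|$ is the number of edges of $\mathcal T$ lying on $\del U$, while $\Vol(S)$ counts incidences between the cells dual to $S$ and their edges. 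Filling the bounded complementary components of $U$ replaces $S$ by a larger connected set with no larger boundary and no smaller volume, hence no larger anchored-expansion ratio, so it suffices to prove the $\liminf$ bound along simply connected $U$.

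First I would invoke the linear isoperimetric inequality in $\Htwo$: there is a universal $\kappa>0$ with $\mathrm{length}(\del U)\ge\kappa\,\mathrm{area}(U)$. The second step is a bounded-geometry comparison valid away from exceptional cells: fixing a large radius $R$, call a cell of $\mathcal T$ \emph{$R$-good} if it lies in a hyperbolic ball of radius $R$ about one of its incident $\PPP$-points. If all cells meeting $S$ are $R$-good then every edge of $\mathcal T$ on $\del U$ has length at most $2R$, so $|\del S|\ge\mathrm{length}(\del U)/2R$; and a good cell with $d$ sides has area comparable to $d$ — at most $\pi d$ for any geodesic $d$-gon, and at least $c(R,\lambda)\,d$ via the fan (kite) decomposition of the cell together with lower bounds on the areas of its constituent triangles, using that once circumradii are bounded the successive vertices of the cell cannot accumulate near its center. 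Chaining these with the isoperimetric inequality gives
\[ |\del S|\;\gtrsim\;\mathrm{area}(U)\;=\;\sum_{v\in S}(\text{cell area})\;\gtrsim\;\Vol(S), \]
which is the theorem, modulo the exceptional cells.

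The third and hardest step is to show the exceptional cells are negligible: for almost every realization, every sufficiently large connected $S\ni\rho$ meets only a small fraction (measured in $\Vol$ and in contributed boundary length) of cells that are not $R$-good, and moreover $U(S)$ never contains anomalously many $\PPP$-points for its area — the latter being what could otherwise make $\sum(\text{cell area})$ fail to dominate $\Vol(S)$, especially in the $\HPV$ case where degrees are unbounded. A cell that fails to be $R$-good forces an empty hyperbolic disk of radius comparable to $R$ anchored near one of its vertices, so Lemma~\ref{lem:triangletail} together with the analogous anchored-empty-disk estimate bounds that event by $e^{-ce^{cR}}$; the point-density event has probability exponentially small in the area, with a rate that is large once $R$ is large. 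The obstacle is that $S$, hence $U(S)$, is chosen after seeing $\PPP$, so these pointwise tails must be promoted to a bound uniform over all large connected unions of cells at once. I would do this by a union bound over a fixed countable family of regions rich enough to approximate every $U(S)$ — for instance all finite unions of balls drawn from a fixed $\epsilon$-net of $\Htwo$ — noting that the number of such regions built from $n$ balls grows only exponentially in $n$, which is crushed by the doubly-exponential (respectively large-rate exponential) smallness of the bad events once $R$ is fixed large, and then applying Borel--Cantelli. Alternatively, the stationarity/unimodularity of $G$, which follows from the Palm calculus of $\PPP$ and is in any case needed to feed Theorem~\ref{thm:hpv} into Theorem~\ref{thm:speed}, permits a mass-transport argument bounding the density of bad cells. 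I expect essentially all the difficulty to lie in matching the notion of ``good cell'' and the granularity of the net so that the entropy of the net is beaten by the tail estimates; the geometric comparison itself is routine hyperbolic trigonometry.
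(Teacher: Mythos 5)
Your architecture (hyperbolic isoperimetry for the upper half, anti-clustering for the lower half) is reasonable, but two things break. First, the deterministic claim that an $R$-good cell with $d$ sides has area at least $c(R,\lambda)\,d$ is false: put a nucleus at $p$ with $d$ other nuclei equally spaced on $\partial \HB(p,\eta)$; the cell of $p$ is a regular $d$-gon of circumradius $O(\eta)$ and area $O(\eta^2)$, all incident circumradii are $O(\eta)$, and the cell is $R$-good for any $\eta<R$. Bounded circumradii \emph{above} do not stop Voronoi vertices from accumulating at the nucleus. This particular step is repairable — by planarity $\Vol[\HPV](S)\leq 6|S|+|\del S|$, so it suffices to compare $|\del S|$ with $|S|$, i.e.\ with the number of nuclei — but that repair funnels everything into your third step.

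That third step is the actual theorem, and your proposed union bound does not close. You need: a.s., every connected union $U$ of $\ell$ cells through the origin has area $\gtrsim \ell$ (equivalently, $\ell$ nuclei spanning a connected piece of the tessellation cannot sit in area $o(\ell)$); this is exactly Proposition~\ref{prop:strongarea}. Discretizing by an $\epsilon$-net, the probability that a prescribed family of $\ell$ net-balls each contains a point of $\PPP$ is $(\lambda\pi\epsilon^2)^{\ell}(1+o(1))$, while the entropy of choosing which net-balls are occupied within a connected region scales like $(C/\epsilon^{2})^{\ell}$ — the $\epsilon$'s cancel, so shrinking the net buys nothing, and the bad configurations you must exclude (e.g.\ $\ell$ nearly collinear nuclei, whose Delaunay triangles are thin slivers of tiny total area but large diameter) require covering $U$ by a number of net-balls governed by its diameter rather than by $\ell$, so the entropy is not beaten. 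A mass-transport bound on the \emph{density} of bad cells also does not suffice, since anchored expansion is a statement about every anchored connected set, not about typical cells. The paper escapes precisely here by exploiting the empty-circumdisk condition: Proposition~\ref{prop:geometry} shows that, given two vertices $x,y$, a third point forming a Delaunay triangle of area $\leq\theta$ is confined to a region of area $O(\theta/\dH(x,y))$ (the circumdisk requirement truncates the otherwise infinite thin-triangle locus), and the entropy is counted combinatorially over triangulation schemes ($(Ck)^k$, Lemma~\ref{lem:triangulationschemes}) against the $\VolH(\HB(0,r))^{-(k-2)}/(\ell-2)!$ density factor (Lemmas~\ref{lem:domination}--\ref{lem:Ztail}, \ref{lem:mule}). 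Your sketch treats this as bookkeeping, but it is where all the work lives; also note the paper's upper-bound half uses the convex-hull volume bound of Lemma~\ref{lem:BE} rather than boundary length, which avoids your extra ``all boundary edges are short'' exceptional event altogether.
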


Both graphs can be seen to satisfy the assumptions of Theorem~\ref{thm:speed} without much additional effort.  The fact that both are stationary follows from the extent of the automorphism group. 
 More generally, the following is true:
\begin{proposition}
  Let $\mathbb{M}$ be a Riemannian symmetric space, and fix $x \in \mathbb{M}$.  Let $\mathcal{X}$ be a Poisson point process conditioned to include $x$ and  whose intensity is invariant under symmetries of $\mathbb{M}$.  Denote by $\mathscr{G}$ be the dual graph of the Voronoi tessellation, and let $\rho$ be the vertex of $\mathscr{G}$ embedded at $x$.  Let $P$ be the distribution of $(\mathscr{G},\rho).$  Then
  \begin{enumerate}
    \item $\Exp \deg \rho < \infty$ and
    \item with $\frac{dQ}{dP} = \frac{\deg \rho}{\Exp \deg \rho},$ $(\mathscr{G},\rho)$ is reversible under $Q.$ 
  \end{enumerate}
  \label{prop:stationarity}
\end{proposition}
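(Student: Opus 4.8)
The plan is to treat the two assertions separately: (2) is the structural point and reduces to the mass-transport principle for the underlying point process, while (1) is a quantitative tail bound on the root degree in the spirit of Lemma~\ref{lem:triangletail}.

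Consider (2) first. Let $G=\operatorname{Isom}_{0}(\mathbb{M})$ be the identity component of the isometry group; it acts transitively on $\mathbb{M}$, preserves the Riemannian volume (which by transitivity is the unique invariant Radon measure up to scaling), and is unimodular — a Riemannian symmetric space is a product of Euclidean, compact-type, and noncompact-type factors, whose identity isometry groups are respectively of the form $\mathbb{R}^{n}\rtimes SO(n)$, compact, and semisimple, all unimodular, and products of unimodular groups are unimodular. Under $P$ the configuration $\mathcal{X}$ is the Palm version at $x$ of the $G$-invariant Poisson process $\Pi$ (so $\mathcal{X}$ has the law of $\Pi$ together with an added point at $x$), and the vertex set of $\mathscr{G}$ is literally $\mathcal{X}$, with adjacency an equivariant deterministic functional of $\mathcal{X}$: $\mathscr{G}(\phi\mathcal{X})=\phi\,\mathscr{G}(\mathcal{X})$ for every isometry $\phi$, since Voronoi cells transform equivariantly. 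Invariance of $\Pi$ under the unimodular group $G$ then gives the Palm form of the mass-transport principle: for every measurable $h\ge 0$ with $h(\phi\mathcal{X},\phi p,\phi q)=h(\mathcal{X},p,q)$,
\[
  \Exp_{x}\sum_{q\in\mathcal{X}}h(\mathcal{X},x,q)\;=\;\Exp_{x}\sum_{q\in\mathcal{X}}h(\mathcal{X},q,x),
\]
where $\Exp_{x}$ is expectation under the Palm measure, i.e.\ under $P$. Applying this with $h(\mathcal{X},p,q)=f(\mathscr{G}(\mathcal{X}),p,q)$ for an arbitrary isomorphism-invariant $f\ge 0$ on birooted graphs shows that $P$ is unimodular. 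Granting (1), reversibility of $(\mathscr{G},\rho)$ under $Q$ is then exactly the equivalence recalled in the introduction (\cite[Proposition~2.5]{BenjaminiCurien}).

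For (1), I would bound $\deg\rho$, the number of Voronoi cells adjacent to the cell $C_{x}$ of $x$, in terms of the size of $C_{x}$. If $C_{y}$ is adjacent to $C_{x}$, there is a point $z\in\overline{C_{x}}\cap\overline{C_{y}}$ with $d(x,z)=d(y,z)=\min_{p\in\mathcal{X}}d(z,p)$, so $d(x,y)\le 2\,d(x,z)$ and the nucleus $y$ lies within distance $2R_{x}$ of $x$, where $R_{x}\Def\sup_{z\in\overline{C_{x}}}d(x,z)$; hence $\deg\rho\le|\mathcal{X}\cap B(x,2R_{x})|$. It therefore suffices that $R_{x}$ has a light enough tail. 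By homogeneity of $\mathbb{M}$ one fixes once and for all a finite family of geodesic cones issuing from $x$ that jointly surround $x$, with the property that if $\mathcal{X}$ meets the slice of each cone lying at distance in $[s,2s]$ then $R_{x}\le cs$ for a constant $c$ depending only on $\mathbb{M}$; the probability that some slice fails to meet $\mathcal{X}$ decays exponentially in the volume of a single slice, which grows at least polynomially in $s$, and this is summable against the at-most-exponential growth of $\Exp|\mathcal{X}\cap B(x,2cs)|$. (When $\mathbb{M}$ is compact this is trivial: then $\overline{C_{x}}\subseteq\mathbb{M}$ is automatically bounded and $\deg\rho\le|\mathcal{X}|$ already has finite mean.) Assembling these estimates — after decoupling the point count on $B(x,2cs)$ from the emptiness events on disjoint annular slices, or, more cleanly, by running the computation through the Mecke identity, which expresses $\Exp\deg\rho$ as the integral over $y\in\mathbb{M}$ of $\Pr[\,C_{x}\sim C_{y}\text{ in }\Pi\cup\{x,y\}\,]$ against the intensity measure, then bounding $\Pr[C_{x}\sim C_{y}]$ by the probability that a suitable ball of radius $\gtrsim d(x,y)$ contains no point — yields $\Exp\deg\rho<\infty$, which is (1).

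The main obstacle is the quantitative estimate in (1): obtaining a tail bound on $R_{x}$ (equivalently, decay of $\Pr[C_{x}\sim C_{y}]$ in $d(x,y)$) that holds uniformly across all Riemannian symmetric spaces. This rests on the standard fact that symmetric spaces have at-most-exponential volume growth, together with empty-region probabilities decaying fast enough to compensate; the calculation is routine but delicate, and is essentially the general-geometry counterpart of the argument behind Lemma~\ref{lem:triangletail}. By contrast, the ingredients behind (2) — equivariance of the Voronoi construction, unimodularity of $\operatorname{Isom}_{0}(\mathbb{M})$, the Palm mass-transport principle, and the passage from unimodularity of $P$ to reversibility of $Q$ — are all standard.
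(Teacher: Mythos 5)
Your part~(2) is correct but follows a genuinely different route from the paper's. You pass through unimodularity of $\operatorname{Isom}_0(\mathbb{M})$ and the Palm mass-transport principle to conclude that $P$ is unimodular, then invoke the $P$--$Q$ equivalence from the introduction. The paper instead verifies reversibility of $Q$ directly: for each $y$ it uses the isometry $\tau_{x,y}$ interchanging $x$ and $y$ (the geodesic symmetry at their midpoint, which exists precisely because $\mathbb{M}$ is symmetric) to show that the two-point Palm law $\Pr_y$ is invariant under $\tau_{x,y}^*$ and that the connection event $\mathcal{C}_{x,y}$ is preserved, then integrates over $y$ after accounting for the degree factors coming from the random-walk step. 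Your route buys generality and brevity at the cost of importing Haar-measure facts and the Palm exchange formula; the paper's is self-contained, using only the single involution that the symmetric-space structure provides. Either is acceptable.

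For part~(1) your ``Mecke'' route is exactly the paper's ($\Exp\deg\rho=\lambda\int_{\mathbb{M}}\Pr_y[\mathcal{C}_{x,y}]\,dV(y)$), but two points need repair. First, the cone argument is false whenever $\mathbb{M}$ has a negatively curved factor: by the hyperbolic law of cosines, a point of $\mathcal{X}$ at distance $t\in[s,2s]$ from $x$ only excludes from $C_x$ the far-away directions within angle roughly $e^{-t/2}$ of itself, so the angular resolution needed to certify $R_x\le cs$ shrinks exponentially in $s$ and cannot be achieved by a finite, $s$-independent family of cones. (This is precisely why Lemma~\ref{lem:triangletail} uses a net of cardinality of order $e^{3r/4}$ rather than boundedly many sectors.) Second, in the Mecke route there is no single ``suitable ball'' whose emptiness is implied by $\mathcal{C}_{x,y}$: the empty ball witnessing adjacency is $\MB(u,\dM(u,x))$ for some point $u$ ranging over the entire (noncompact) bisector, with radius bounded below only by $\max(\dM(x,m),\dM(u,m))$ where $m$ is the midpoint. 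One must therefore union-bound over a $1$-net of the whole bisector, control the net's cardinality in annuli around $m$ using the volume-growth function $f$, and check that the at-most-exponential upper bound on $f$ is beaten by the $e^{-\lambda f(\cdot)}$ decay coming from the polynomial lower bound $f(r)\ge cr^d$ (geodesic convexity of balls on the nonpositively curved factor enters here, to guarantee the empty ball contains $m$). That estimate is the actual content of part~(1), and as written your sketch does not yet contain it.
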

\noindent Equivalently, $(\mathscr{G},\rho)$ under $P$ is unimodular.  We give the proof in Section~\ref{sec:other}.

\begin{remark}
  \label{rem:same}
  The same is also true for $\HPD$ if we condition $x$ to be a vertex of the $1$-skeleton of the Voronoi tessellation.  In addition, Theorem~\ref{thm:hpv} is easily seen to remain true under this conditioning or indeed under no conditioning.  Let $\Pi_0^\lambda$ be the unconditioned version of $\PPP,$ and let $\Pi_1^\lambda$ be distributed as $\Pi_0^\lambda$ conditioned to have a vertex of the $1$-skeleton of the Voronoi tessellation at $0.$  This is equivalent to requiring the smallest closed disk centered at $0$ that contains a point of $\Pi_0^\lambda$ to have three i.i.d points chosen uniformly from its boundary.  Hence, the processes $(\Pi_0^\lambda, \PPP, \Pi_1^\lambda)$ can be coupled in the following way.  Let $\PPP$ be given by $\Pi_0^\lambda \cup \{ 0 \}$ and let $\Pi_1^\lambda$ be given by $\Pi_0^\lambda \cup \{X,Y\}$ where $X,Y$ two additional random points uniformly distributed from the circle that contains the closest point of $\Pi_0^\lambda$ to $0.$  Since adding or deleting finitely many points does not change $\aec(G)$, Theorem~\ref{thm:hpv} is true for all three point processes.
\end{remark}

Observe that $\HPD$ trivially satisfies the exponential growth condition of Theorem~\ref{thm:speed} since it is $3$-regular.  Hence by Theorem~\ref{thm:speed} (or indeed the original theorem of~\cite{Virag}), simple random walk on $\HPD$ has positive speed.  Since $\HPD$ is bounded degree and planar, it is also non-Liouville \cite{BenjaminiSchramm96}; so, by Theorem~\ref{thm:harmonics}, it has an infinite-dimensional space of bounded harmonic functions.  Finally, in Proposition~\ref{prop:S1convergence}, we show that the random walk converges to a point on the ideal boundary of $\Htwo$ (effectively that as a sequence of points in the Poincar\'e disk, it converges to a point on the unit circle $S^1$).

For $\HPV,$ the exponential growth condition of Theorem~\ref{thm:speed} is given by Proposition~\ref{prop:expgrowth}.  Hence it too has positive speed.  The existence of an infinite-dimensional space of bounded harmonic functions follows from the entropy considerations of~\cite{Lasso} (the result of~\cite{Lasso} could also be applied to $\HPD$).  Like with $\HPD,$ random walk almost surely converges to a point on $S^1.$


\subsection*{Discussion}

We give a criterion for positive speed of simple random walk in terms of positive anchored expansion, which implies transience.  For many classes of stationary random graphs, there is an alternative between recurrence and positive speed.  For example, unimodular random trees of at most exponential growth and bounded degree planar graphs both have that random walk is almost surely recurrent or random walk has positive speed (this follows from combining results of \cite{BenjaminiSchramm96} and \cite{BenjaminiCurien}).  We would expect that bounded degree unimodular Gromov hyperbolic graphs are another such class.  Note that $\HPV$ in a sense barely fails to satisfy any of these conditions.

The hyperbolic Poisson Voronoi tessellation is first considered by~\cite{BenjaminiSchramm01}, in which Bernoulli percolation on $\HPV$ is studied.  There it is shown that Bernoulli percolation undergoes two phase transitions at values $p_c(\lambda) < p_u(\lambda)$ from having only finite clusters, to having infinitely many infinite clusters, to having a unique infinite cluster.  Further, an upper bound is given on $p_c(\lambda)$ that suggests (and led the first author and Oded Schramm to conjecture) that $p_c(\lambda) \to \tfrac 12$ as $\lambda \to \infty.$

Indeed, it is the case that as $\lambda \to \infty,$ all finite neighborhoods of $0$ in $\HPV$ will converge in law to those of the Euclidean Poisson Voronoi tessellation.  This is because increasing $\lambda$ is tantamount to decreasing the curvature.  This curvature effect can also be seen in how the average degree (appropriately defined) decreases as $\lambda$ increases (see Figure~\ref{fig:simulation} for an illustration).   

The hyperbolic Poisson Voronoi tessellation is one of many planar stochastic triangulations with hyperbolic characteristics.  The planar stochastic hyperbolic triangulation of~\cite{Curien} is formed by a peeling process that ensures a type of spatial Markov property. The Markovian hyperbolic triangulation~\cite{CurienWerner} gives a triangulation of the hyperbolic plane by infinite triangles. There are also half-plane versions~\cite{AngelRay,AngelNachmiasRay} with another type of Markov property which share many of the same features.

Though the hyperbolic Poisson Voronoi tessellation does not have a domain Markov property, the diameter of any given Voronoi cell has a subexponential tail (cf. Lemma~\ref{lem:triangletail}).  Hence, there is a very strong sense in which the graph is local, i.e., the law of a small neighborhood of a graph depends only on the portion of $\PPP$ in a small ball around that point.  This leads to rapid decorrelation between a neighborhood of any vertex of $\HPV$ and a neighborhood of a vertex that is far away.  

Consequently, it is straightforward to show that the hyperbolic Poisson Voronoi tessellation arises as a local limit of finite triangulations.
This was observed earlier by the first author and Oded Schramm before the notion of local limit was codified (see~\cite[``Hyperbolic Surfaces'' proof of Theorem 6.2]{BenjaminiSchramm01}); we also give a proof that $\HPV$ and $\HPD$ are local limits in Proposition~\ref{prop:sofic}. 

For approaches to finite random triangulations of the full plane that use enumerative techniques, this property is comparably quite difficult; indeed this question is open for the plane and half-plane stochastic hyperbolic triangulations.  However, the large scale behavior of all of these triangulation models should be similar, and it would be interesting to know if there is a specific sense in which the hyperbolic Poisson Voronoi tessellation is absolutely continuous with respect to the planar stochastic hyperbolic triangulation in the hyperbolic regime.

\subsection*{Open questions}

We believe that this analysis has only scratched the surface of what can be asked about the hyperbolic Poisson Voronoi tessellation.  First, the hyperbolic Poisson Voronoi tessellation generalizes immediately to higher-dimensional hyperbolic spaces, for which it should still be the case that various dual graphs have positive anchored expansion.  We believe the same approach used here could be adapted to that case with the principal missing component being the analogue of Proposition~\ref{prop:geometry}.
\begin{conjecture}
  Let $\mathbb{H}^d$ denote $d$-dimensional hyperbolic space, and let $\PPP$ be a Poisson process with invariant intensity measure.  The dual graph of the Voronoi tessellation of $\PPP$ has positive anchored expansion.
\end{conjecture}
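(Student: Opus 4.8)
Our plan is to follow the proof of Theorem~\ref{thm:hpv}, carrying over its argument to $\mathbb{H}^d$ once the two-dimensional geometric inputs are replaced by $d$-dimensional ones. The first step is the analogue of Lemma~\ref{lem:triangletail}. Since a hyperbolic ball of radius $r$ in $\mathbb{H}^d$ has volume of order $e^{(d-1)r}$, the probability that such a ball contains no point of $\PPP$ is $\exp(-\Theta(e^{(d-1)r}))$; a union bound over a covering of $B(0,r)$ by unit balls then shows that the union of Voronoi cells incident to $0$ fails to be contained in $B(0,Cr)$ with probability at most $\exp(c_1 r - c_2 e^{c_3 r})$. The same reasoning shows that, up to multiplicative constants, the event ``$\operatorname{Vor}(0)$ has more than $k$ facets'' is contained in the event that a ball of bounded radius about $0$ contains more than $k$ points of $\PPP$, which has probability of order $e^{-\Omega(k\log k)}$; so cells of large degree are as rare as cells of large diameter. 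Unimodularity of the rooted graph --- the input behind the stationarity hypotheses of the general machinery --- is already supplied by Proposition~\ref{prop:stationarity}, which is stated for an arbitrary Riemannian symmetric space and in particular covers $\mathbb{H}^d$.

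The heart of the matter is the $d$-dimensional version of Proposition~\ref{prop:geometry}. The geometric fact to exploit is that $\mathbb{H}^d$ satisfies a linear isoperimetric inequality, $\operatorname{Vol}_{d-1}(\partial A) \ge c_d \operatorname{Vol}_d(A)$ for every bounded region $A$ (with $c_d = d-1$ sharp), and hence, applying this to the super-level sets of $x \mapsto d(x,\partial A)$ via the co-area formula, the inner $L$-collar of $\partial A$ carries at least a $(1 - e^{-c_d L})$-fraction of the volume of $A$. One would like to transfer this to the dual graph $G$ of the Voronoi tessellation as follows. Let $S \ni \rho$ be finite and connected in $G$, and suppose for now that every cell $\operatorname{Vor}(v)$, $v \in S$, has diameter at most $L$ and at most $D$ facets; set $U = \bigcup_{v \in S}\operatorname{Vor}(v)$. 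The $(d-1)$-facets lying on $\partial U$ are exactly the Voronoi facets recorded by the edge boundary $\partial S$, so bounding $\operatorname{Vol}_{d-1}(\partial U)$ above by $|\partial S|$ times the maximal facet area of a diameter-$L$ cell gives $|\partial S| \gtrsim_{d,L} \operatorname{Vol}_d(U)$. Feeding in the collar estimate, one hopes to conclude that a definite fraction of the cells of $S$ meet the $L$-collar of $\partial U$ and therefore contribute to $\partial S$, so that $|\partial S| \gtrsim |S|$; combined with a bound $\sum_{v\in S}\deg[S](v) \le C_d |S|$ on internal degrees and with the facet cap $D$ on boundary cells, this gives $\operatorname{Vol}_G(S) \le C(d,\lambda,L,D)\,|\partial S|$. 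The rare ``bad'' cells --- those of diameter $> L$ or with more than $D$ facets --- are then removed exactly as in the $d=2$ proof: the doubly-exponential tails above make them so sparse that, almost surely, no large connected $S$ can draw more than a negligible fraction of its cells (or of its total degree) from them, and a Borel--Cantelli argument over the countable collection of connected vertex sets containing $\rho$ finishes the proof.

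The step I expect to be the genuine obstacle is making the transfer in the middle paragraph rigorous when $d \ge 3$, and this is precisely why Proposition~\ref{prop:geometry} has no immediate higher-dimensional analogue. Two features special to the plane disappear. First, in $d=2$ the dual graph $\HPV$ is planar, which is what forces $\sum_{v\in S}\deg[S](v) = O(|S|)$; for $d \ge 3$ the Delaunay $1$-skeleton on $n$ points can have $\Theta(n^2)$ edges, so even the internal degree sum of a connected set need not be linear in $|S|$, and one must instead argue probabilistically that, away from a sparse bad set, the empirical degree and co-degree statistics of $S$ are controlled. Second, Poisson--Voronoi cells have no lower volume bound, so ``a positive fraction of the cells of $S$ meet the collar of $\partial U$'' genuinely cannot be deduced from the volume statement --- it requires a combinatorial isoperimetric inequality for the tessellation itself (the true content of Proposition~\ref{prop:geometry}), presumably obtained by a covering/packing argument that trades volume for cell count at the cost of discarding the rare clusters of many small cells. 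Producing a $d$-dimensional collar estimate, a facet-counting form of the hyperbolic isoperimetric inequality valid uniformly over unions of bounded cells, and a sparseness statement for bad cells robust enough to survive the weaker combinatorial control available in higher dimensions is where the real work lies; the remaining Borel--Cantelli wrap-up over connected subsets of $G$ should go through as in the proof of Theorem~\ref{thm:hpv}.
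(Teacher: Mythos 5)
This statement is a \emph{conjecture} in the paper, not a theorem: the authors give no proof and explicitly identify the missing ingredient as a higher-dimensional analogue of Proposition~\ref{prop:geometry}. Your proposal does not close that gap --- you say so yourself in your final paragraph --- so what you have written is a plan rather than a proof, and the plan stalls at precisely the step the authors flag as open. The parts you do carry out (the analogue of Lemma~\ref{lem:triangletail} via empty-ball tails, and the appeal to Proposition~\ref{prop:stationarity} for unimodularity) are the routine parts and are not where the difficulty lies.

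Beyond that, the route sketched in your middle paragraph is not the one the paper uses for $d=2$, and it runs into an obstruction the paper itself points out. Passing from the linear isoperimetric inequality in $\mathbb{H}^d$ plus a collar estimate to $|\partial S|\gtrsim |S|$ is essentially the argument of Proposition~\ref{prop:prototype}, which works only when the nuclei are $1$-separated so that every cell has volume bounded below; for a Poisson process there is no such bound, arbitrarily many cells can crowd into a region of negligible volume, and $|\partial S|\gtrsim \VolH(U)$ then says nothing about $|S|$. You acknowledge this, but the fix you point to is mischaracterized: Proposition~\ref{prop:geometry} is not a ``combinatorial isoperimetric inequality for the tessellation''; it is a pointwise probabilistic estimate --- the probability that a uniform random point completes a given pair $x,y$ to a small-area triangle admitting a circumdisk is at most $C\theta/(\dH(x,y)|\HB(0,r)|)$ --- proved by an explicit computation in the Poincar\'e disk. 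The paper's actual strategy \emph{lower}-bounds the volume of a strongly connected union of $k$ Delaunay triangles by $ck$ (Proposition~\ref{prop:strongarea}), via stochastic domination by a tree-indexed product of uniforms (Lemma~\ref{lem:domination}) and a union bound over $(Ck)^k$ triangulation schemes (Lemma~\ref{lem:triangulationschemes}), and then \emph{upper}-bounds the volume of the convex hull of the boundary vertices by $4\pi$ times their number (Lemma~\ref{lem:BE}); no collar estimate appears. To run this in $\mathbb{H}^d$ one needs (i) the $d$-dimensional analogue of the circumdisk/area estimate of Section~\ref{sec:geometry}, (ii) a replacement for the planarity and Euler-formula bookkeeping relating the number of Delaunay simplices to $\VolV(S)$ (a genuine issue for $d\ge 3$, as you note, since the Delaunay complex can have superlinearly many simplices), and (iii) a count of $d$-dimensional triangulation schemes. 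Your proposal supplies none of these, so the conjecture remains open.
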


More could be said about the speed, $s(\lambda) = \lim_{k \to \infty} d_G(X_k,\rho)/k.$  Note that our result does not show that $s(\lambda)$ is deterministic, which we expect it should be.  As increasing $\lambda$ has the effect of decreasing curvature, we expect that $s(\lambda)$ should be continuous and strictly monotone decreasing with $s(0^+) = 1$ and $s(\infty^-) = 0.$  

By Theorem \ref{thm:harmonics} and \cite{Lasso}, we know that the space of bounded harmonic functions of the random walk on $\HPV$ is infinite-dimensional, but considerably more could be said.  For example, it would be nice to know that $\HPV$ has bounded harmonic functions with finite Dirichlet energy (c.f. \cite[Theorem 1.1]{BenjaminiSchramm96}) and that the wired uniform spanning forest is almost surely $1$-ended (c.f. \cite[Theorem 7.2]{AldousLyons}).

Recall that the space of bounded harmonic functions can be endowed with a measure to make it isomorphic to the Poisson boundary (see~\cite{Kaimanovich02} for relevant background).  
For simple random walk on $\HPV$ or $\HPD,$ we show that simple random walk considered as a sequence in the Poincar\'e disk converges almost surely to a point of $S^1$ in the topology of $\C$ (see Proposition~\ref{prop:S1convergence}).
There are many general results about boundary convergence of random walk that are nearly applicable to $\HPV$ and $\HPD.$
For example, \cite[Theorem 1.1]{BenjaminiSchramm96} or the recent work of \cite{Georgakopoulos, ABGGN} nearly apply to $\HPV.$   
This leads us to believe that these results could be extended to cover graphs like $\HPV$. For example in the latter work, is it possible to replace the bounded degree assumption by a stationary assumption as we have done here?
\begin{conjecture}
  Let $(G,\rho)$ be a transient stationary random graph that is almost surely planar.  
  Then $(G,\rho)$ embeds as a stationary subgraph of the Poincar\'e disk and 
  for almost every realization of $(G,\rho)$
  simple random walk almost surely converges to a point on $S^1$ in the topology of $\C.$
\end{conjecture}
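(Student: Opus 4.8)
\smallskip
\noindent\emph{Proof proposal.} The plan is to realize $(G,\rho)$ inside the Poincar\'e disk by discrete conformal uniformization and then adapt the boundary-convergence argument of \cite{BenjaminiSchramm96} to the stationary setting. After routine local modifications I would reduce to the case of a triangulation: add to the planar embedding of $G$ a diagonal-insertion rule that is a deterministic function of the embedded graph, hence commutes with rerooting, producing a simple unimodular random planar triangulation $(T,\rho)$ into which $G$ embeds as a subgraph. As in the perturbation arguments used throughout this paper, stationarity and transience pass from $G$ to $T$ (transience by Rayleigh monotonicity), and an embedding of $T$ together with a boundary-convergence statement for the walk on $T$ restricts to the corresponding statements for $G$; so it suffices to treat $T$.

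Next I would invoke the circle-packing dichotomy: $T$ carries a circle packing whose carrier is either the whole plane (parabolic type) or the open unit disk (hyperbolic type), and one must show that transience of simple random walk forces the hyperbolic alternative, the parabolic type being recurrent. This is the He--Schramm theorem in the bounded-degree case; here the content is to obtain both the dichotomy and the implication \emph{transient} $\Rightarrow$ \emph{disk type} under the stationarity hypothesis in place of a degree bound. Once $T$ is of disk type the packing is rigid --- unique up to M\"obius automorphisms of the disk --- so after the canonical normalization centering the circle of $\rho$ at the origin with a prescribed orientation, the packing is a measurable equivariant function of $(T,\rho)$. Combined with unimodularity of $(T,\rho)$ (equivalently, the reversibility of the degree-biased law, as in Proposition~\ref{prop:stationarity}), this exhibits $G\subset T$ as a stationary random subgraph of the Poincar\'e disk, giving the first assertion.

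For the second assertion, with the embedding fixed the packed circles have finite total Euclidean area, so along a transient trajectory the radii of the circles visited by the walk tend to $0$; a nested-neighborhood / Borel--Cantelli argument then traps the walk in arbitrarily small Euclidean balls and pins its limit to a single point of $S^1$ in the topology of $\C$, as in \cite{BenjaminiSchramm96}. It is transience, not merely escape to infinity in the graph metric, that forces the radii to vanish along the trajectory.

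The main obstacle is the absence of a degree bound. The ring lemma --- which bounds the relative sizes of adjacent circles and makes the graph metric quasi-isometric to the packing metric --- fails without it, and a single vertex of large degree can create a cluster of microscopic circles; consequently the type dichotomy, the rigidity, and the vanishing of radii along the walk all require new input. I expect this to be supplied by replacing pointwise geometric estimates with ones that hold only on average, extracted by mass transport from the stationarity of $(G,\rho)$, and I believe organizing this averaging so that it simultaneously controls the type and the dynamics is the technical heart of the conjecture. A variant more tolerant of large degrees is to run the same program with Benjamini--Schramm square tilings in place of circle packings.
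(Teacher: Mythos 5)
This statement is posed in the paper as an open \emph{conjecture}, not a theorem: the authors explicitly ask whether the bounded-degree hypotheses of \cite{BenjaminiSchramm96} and of Georgakopoulos and of Angel--Barlow--Gurel-Gurevich--Nachmias can be replaced by stationarity, and they offer no proof. So there is nothing in the paper to compare your argument against, and your proposal should be judged on its own terms. On those terms it is an outline of the standard circle-packing attack rather than a proof, and --- to your credit --- you say so yourself: the type dichotomy (transient $\Rightarrow$ disk type), the rigidity of the packing, and the decay of radii along the trajectory all rest on the Ring Lemma in the bounded-degree theory, and you explicitly defer the replacement of that input by ``averaged'' mass-transport estimates to future work. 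That replacement is precisely the content of the conjecture; a proposal that reduces the conjecture to itself is not a proof.

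Two further steps deserve more care than you give them. First, the reduction to a triangulation is not ``routine'': a ``diagonal-insertion rule that is a deterministic function of the embedded graph'' presupposes a canonical embedding, which is part of what is to be constructed; what one actually needs is a unimodular choice of combinatorial planar embedding, and that is itself a nontrivial statement for unimodular planar graphs. Moreover, stationarity does not simply ``pass from $G$ to $T$'' --- adding edges changes the walk, and what survives local modifications is unimodularity of the degree-biased law (as in Proposition~\ref{prop:stationarity}), which already requires $\Exp \deg\rho < \infty$, an assumption absent from the conjecture as stated. Second, even granting a disk-type packing, ``radii visited tend to $0$'' does not by itself pin the walk to a single point of $S^1$: one must rule out oscillation of the Euclidean trajectory between distant boundary arcs, which in \cite{BenjaminiSchramm96} is exactly where the Ring Lemma (or, in later work, harmonic-measure estimates) enters. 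Your square-tiling variant faces the same obstruction. In short, the proposal correctly identifies the natural strategy and the open technical heart, but it does not resolve the conjecture.
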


Using the convergence of simple random walk to a point on $S^1$, the Poisson boundary can be naturally identified with the unit circle $S^1,$ together with a measure $\nu_0$ which can be considered as the harmonic measure at $\infty$.  
Precisely, for Borel $A \subset S^1,$ $\nu_0(A)$ is the probability that random walk converges in the Euclidean metric to a point in $A$. 
For a reference point, the Poisson boundary of hyperbolic Brownian motion on the Poincar\'e disk started at $0$ is naturally identified with Lebesgue measure on $S^1$.  
Given the dimension drop phenomenon observed for harmonic measure on infinite supercritical Galton-Watson trees~\cite{LyonsPemantlePeres}, we expect this is not the case here:
\begin{conjecture}
For almost every realization of $\HPV,$ $\nu_0$ is singular with respect to Lebesgue measure on $S^1.$
\end{conjecture}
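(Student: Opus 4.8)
\noindent\emph{A possible approach.} The plan is to reduce the conjecture to the strict inequality $h < \ell$, where $h$ is the asymptotic entropy of the walk and $\ell$ its escape rate in the hyperbolic metric, and then to establish this strict inequality by a dimension-drop argument in the spirit of \cite{LyonsPemantlePeres}. First, $\nu_0$ is a well-defined random probability measure on $S^1$ by Proposition~\ref{prop:S1convergence}, and since the law of $\PPP$ --- hence of $\HPV$ and of $\nu_0$ --- is invariant under the rotations of the Poincar\'e disk fixing $0$, the mean measure $\Exp \nu_0$ is normalized Lebesgue measure; so, exactly as for harmonic measure on a Galton--Watson tree, $\nu_0$ is ``on average'' Lebesgue and the content of the conjecture is that it is nonetheless almost surely singular. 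The Avez entropy $h = -\lim_n \tfrac1n \log \mu^n(\rho,X_n)$ exists and is strictly positive because $\HPV$ is non-Liouville (a Shannon--McMillan--Breiman theorem for the environment seen from the particle, as in the entropy theory behind \cite{Lasso} and \cite{Kaimanovich02}); the hyperbolic escape rate $\ell = \lim_n \tfrac1n \dH(0,X_n)$ exists by Kingman's subadditive ergodic theorem, is finite because $\Exp \dH(0,X_1) < \infty$ by the cell-diameter tail of Lemma~\ref{lem:triangletail}, and is strictly positive because the walk has positive speed in the graph metric (Theorems~\ref{thm:speed} and~\ref{thm:hpv}) while the same lemma and a Borel--Cantelli argument show that $\dH$ dominates a random positive multiple of $d_G$ along hyperbolic geodesics.

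Next, $\dim_H \nu_0 \le h/\ell$ almost surely, by the standard shadow computation on $\Htwo$: the visual metric on $\partial\Htwo = S^1$ from $0$ is bi-Lipschitz to arc length, the shadow $\mathcal O_n$ of the unit hyperbolic ball about $X_n$ is an arc of Lebesgue measure comparable to $e^{-\dH(0,X_n)} = e^{-(\ell+o(1))n}$, while $\nu_0(\mathcal O_n) \ge c\,\mu^n(\rho,X_n) = e^{-(h+o(1))n}$ because after $n$ steps the walk sits at $X_n$ and, by the concentration of the harmonic measure seen from a distant point, escapes into its own radial shadow $\mathcal O_n$ with conditional probability bounded below --- the locality and rapid decorrelation of $\HPV$ stressed in the Discussion are what make this last estimate uniform. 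Feeding these into the Lebesgue differentiation theorem (an absolutely continuous part of $\nu_0$ would force lower local dimension at least $1$ on its support) gives $\dim_H \nu_0 \le h/\ell$; since every probability measure on $S^1$ has dimension at most $1$ this also recovers the ``fundamental inequality'' $h \le \ell$, and the conjecture becomes equivalent to the \emph{strict} inequality $h < \ell$.

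Establishing $h < \ell$ is the main obstacle, and I would attack it as in \cite{LyonsPemantlePeres}. Using the unimodularity of $\HPV$ (Proposition~\ref{prop:stationarity}) and the mass-transport principle in place of a transitive symmetry group, one expresses both $h$ and $\ell$ as expectations of a cocycle against the invariant law of the environment seen from the walk conditioned on its exit point $\xi_\infty$: for $h$ the cocycle is $\log\frac{d\nu_{X_1}}{d\nu_\rho}(\xi_\infty)$, the increment of the logarithm of the random-walk Martin kernel, and for $\ell$ it is the increment $-B_{\xi_\infty}(\rho,X_1)$ of the hyperbolic Busemann function. A harmonicity identity should make the ratio of the Martin kernel to the hyperbolic Poisson kernel integrate to a constant along the conditioned walk, so that $\ell - h$ is the expectation of $-\log$ of that ratio and is nonnegative by Jensen's inequality, \emph{with equality if and only if the ratio is almost surely constant}; the latter would force the Martin kernel of the walk to be a deterministic multiple of the hyperbolic Poisson kernel, which is incompatible with the genuine fluctuation of the Voronoi cells traversed near $\xi_\infty$ --- just as, in the Galton--Watson case, equality in the dimension bound holds only for the regular tree. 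The delicate points I expect are (i) justifying these Furstenberg-type formulas and the harmonicity identity in the absence of a group action, and (ii) upgrading ``the environment is not deterministic'' to ``the Martin kernel is not a deterministic multiple of the Poisson kernel,'' for which it should suffice to exhibit two local configurations, each of positive probability, that yield different Martin-kernel increments.
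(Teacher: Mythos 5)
This statement is one of the paper's open problems: it is stated as a conjecture and the authors give no proof of it, so there is nothing in the paper to compare your argument against except the heuristic motivation they offer (the dimension-drop phenomenon of \cite{LyonsPemantlePeres}). Your proposal follows exactly the route one would expect — identify the a.s.\ Hausdorff dimension of $\nu_0$ with the ratio $h/\ell$ of Avez entropy to hyperbolic escape rate, and then prove the strict fundamental inequality $h<\ell$ — and the soft ingredients you cite (rotation invariance forcing $\Exp\nu_0$ to be Lebesgue, positive speed from Theorems~\ref{thm:speed} and~\ref{thm:hpv}, finiteness of $\ell$ from Lemma~\ref{lem:triangletail}) are all correctly sourced. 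But as written this is a research program, not a proof, and you have flagged only some of the gaps.

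The three steps that are genuinely missing are the following. First, the shadow lower bound $\nu_0(\mathcal O_n)\geq c\,\mu^n(\rho,X_n)$ requires showing that, conditionally on $X_n$, the walk converges to a boundary point in the radial shadow of $X_n$ with probability bounded below \emph{uniformly over the random environment near $X_n$}; ``locality and rapid decorrelation'' do not supply this, since the relevant event depends on the environment along an entire infinite ray, and no quantitative ray-approximation or Ancona-type estimate is available for $\HPV$. Without it you only get an upper bound on $\nu_0$ of shadows in terms of Green's-function quantities, not the two-sided comparison needed for $\dim\nu_0\leq h/\ell$. Second, the existence and strict positivity of $h$, and the validity of the dimension formula, require a Shannon--McMillan--Breiman theorem for the environment seen from the particle on a reversible random graph; this is plausible via \cite{Lasso} but is itself a theorem to be proved, not a citation. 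Third and most seriously, the strict inequality $h<\ell$ rests on a ``harmonicity identity'' asserting that the ratio of the Martin kernel to the hyperbolic Poisson kernel integrates to a constant along the conditioned walk. No such identity is known here: the Martin boundary of $\HPV$ has not been identified with $S^1$, the Busemann cocycle $-B_{\xi_\infty}(\rho,X_1)$ is not a priori in $L^1$ of the conditioned law, and even granting the Jensen setup, the equality case does not reduce to ``the environment is deterministic'' --- it reduces to a rigidity statement about Martin kernels that is precisely the hard content of the dimension-drop theorems in \cite{LyonsPemantlePeres} and their successors. Exhibiting two local configurations with different one-step Martin-kernel increments would not suffice, because the Martin kernel is a global, environment-dependent object and the equality case concerns an a.s.\ identity along the trajectory of the conditioned walk, not a pointwise one over local configurations. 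So the conjecture remains open modulo these steps.
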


While $\HPV$ and $\HPD$ are hyperbolic on a large scale, they can not satisfy many types of uniform hyperbolic properties, such as Gromov hyperbolicity, by virtue of every finite planar graph embedding into $\HPV.$ That said, there is still room to characterize which qualitative features of hyperbolic graphs are present.  For example, it is natural to ask how nearly geodesics in $\HPV$ match geodesics from the hyperbolic plane.  
\begin{conjecture}
Consider connecting the Voronoi cells containing $-r$ and $r$ by a geodesic in $\HPV,$ and let $D_r$ be the graph distance from this geodesic to the cell containing $0;$ then $D_r$ a tight family of variables.
\end{conjecture}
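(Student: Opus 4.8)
The plan is to prove the equivalent statement that $\Pr[D_r>k]\le\epsilon(k)$ for some function $\epsilon(k)\to0$ independent of $r$. Throughout, write $a,b,o$ for the Voronoi cells containing $-r,r,0$ and $\rho_r=\dH(0,r)$, so that $-r,0,r$ are collinear in $\Htwo$ with $o$ at their midpoint. Two elementary observations frame the problem. First, for any $v$ on an $a$-$b$ geodesic we have $d_{\HPV}(o,v)\ge d_{\HPV}(o,a)-d_{\HPV}(a,v)$ and $d_{\HPV}(o,v)\ge d_{\HPV}(o,b)-d_{\HPV}(b,v)$; adding these and using $d_{\HPV}(a,v)+d_{\HPV}(b,v)=d_{\HPV}(a,b)$ gives $D_r\ge\tfrac12\bigl(d_{\HPV}(o,a)+d_{\HPV}(o,b)-d_{\HPV}(a,b)\bigr)$, the Gromov product $(a\cdot b)_o$; so only an upper bound on $D_r$ is at issue. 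Second, any $a$-$b$ path in $\HPV$ must contain a Delaunay edge whose hyperbolic realization crosses the geodesic line $\ell_0$ perpendicular to $[-r,r]$ at $0$, since its two ends lie on opposite sides of $\ell_0$. If $\gamma_r$ is the chosen geodesic, $e=\{C,C'\}$ such an edge of it, $\zeta$ the crossing point and $\ell(e)$ the hyperbolic length of $e$, then a short computation with the hyperbolic law of cosines gives $\dH(-r,\zeta)+\dH(\zeta,r)\ge 2\rho_r+2\,\dH(0,\zeta)-O(1)$.

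Together these show that the conjecture follows from a sharp distance comparison: there is a deterministic speed $\mu=\mu(\lambda)$ such that $d_{\HPV}(C_x,C_y)=\mu\,\dH(x,y)+O(1)$ in probability, where $C_z$ denotes the cell of $z$. Indeed, applying the lower bound of this comparison to the pairs $(a,C)$ and $(C',b)$, splitting $\gamma_r$ at $e$, and applying the upper bound to $(a,b)$, we get $2\mu\rho_r+O(1)\ge|\gamma_r|\ge 2\mu\rho_r+2\mu\,\dH(0,\zeta)-O(1)-O(\ell(e))$, whence $\dH(0,C)=O(1)+O(\ell(e))$. Since $\ell(e)$ is tight by Lemma~\ref{lem:triangletail}, following the hyperbolic geodesic from $0$ to the nucleus of $C$ and counting the Voronoi edges crossed gives $D_r\le d_{\HPV}(o,C)=O(1)+O(\ell(e))$, which is tight.

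The distance comparison has a routine layer and a hard layer. The routine layer is a shape theorem. Counting the Voronoi edges met by the hyperbolic geodesic $[x,y]$ --- a sum of strongly decorrelated local contributions with finite per-unit-length mean, the decorrelation and finiteness both following from the locality quantified by Lemma~\ref{lem:triangletail} --- bounds $d_{\HPV}$ from above by a constant times $\dH$; and the fact that each Delaunay edge has a small-diameter Voronoi cell at each end (Lemma~\ref{lem:triangletail} again) forces an $m$-edge path in $\HPV$ to have hyperbolic displacement $O(m)$ with overwhelming probability, bounding $d_{\HPV}$ from below by a constant times $\dH$. Since the Poisson process is invariant and mixing under the one-parameter group of hyperbolic translations along a fixed geodesic line, the subadditive ergodic theorem upgrades this to a genuine speed $d_{\HPV}(o,\mathrm{cell}(\gamma(t)))/t\to\mu(\lambda)$ along a hyperbolic ray $\gamma$, with $\mu$ deterministic, and a uniform version yields $d_{\HPV}(x,y)=\mu\,\dH(x,y)+o(\dH(x,y))$.

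The hard layer, which I expect to be the main obstacle, is to improve the $o(\dH)$ error to $O(1)$: first-passage percolation on $\HPV$ should have tight fluctuations, in sharp contrast with Euclidean first-passage percolation, precisely because the underlying geometry is hyperbolic. This is known for first-passage percolation on genuine Gromov hyperbolic groups, but $\HPV$ is not uniformly hyperbolic --- every finite planar graph embeds into it --- so no coarse-geometry statement applies off the shelf, and the bounded fluctuations must be produced directly, via a hyperbolic renormalization: decompose $[x,y]$ into nearly independent blocks using Lemma~\ref{lem:triangletail}, and show that a transversal excursion of a geodesic of size $t$ from the hyperbolic geodesic costs an extra $\gtrsim t$ graph steps that cannot be recouped, so that the additivity defect accumulates only a geometrically summable series of scale-by-scale corrections. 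The driving estimate is positive anchored expansion (Theorem~\ref{thm:hpv}): a detour around the ball $B_{\HPV}(o,j)$ has length at least $|\partial B_{\HPV}(o,j)|\ge c\,\mathrm{Vol}(B_{\HPV}(o,j))\ge e^{cj}$, far exceeding the $O(j)$ cost of crossing the ball, so excursions are suppressed at every scale --- the difficulty being to fuse these scale-by-scale bounds into a single $O(1)$ estimate rather than an $o(\rho_r)$ one. Failing that improvement, the routine layer already gives $D_r=o(\rho_r)$, so the conjecture holds at least after rescaling.
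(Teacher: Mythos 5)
The statement you are addressing is posed in the paper as an open \emph{conjecture}; the paper contains no proof of it, so your proposal must stand on its own, and as written it is not a proof. Your reduction of tightness of $D_r$ to the ``sharp distance comparison'' $d_{\HPV}(C_x,C_y)=\mu\,\dH(x,y)+O(1)$ is sound in outline (modulo one uniformity issue: the $O(1)$ errors must be tight simultaneously over the pairs $(a,C)$ and $(C',b)$, where $C$ is selected by the geodesic rather than fixed in advance, so a pointwise-in-$(x,y)$ comparison does not suffice). But that comparison is itself a strictly stronger open statement --- it is exactly tightness of first-passage-percolation fluctuations on $\HPV$ --- and you explicitly do not prove it. The ``hard layer'' paragraph is a heuristic rather than an argument, and its driving estimate is wrong as stated: the length of a path that detours around $B_{\HPV}(o,j)$ is \emph{not} bounded below by $|\partial B_{\HPV}(o,j)|$, since a detour need not visit every boundary vertex; anchored expansion controls an isoperimetric ratio, not the length of paths avoiding a set. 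The true source of the exponential detour cost is continuous hyperbolic geometry (a curve from $-r$ to $r$ avoiding $\HB(0,j)$ has hyperbolic length at least $2\rho_r-2j+\pi\sinh j$) transferred to the graph by a uniform two-sided comparison of $d_{\HPV}$ with $\dH$; fusing the resulting scale-by-scale gains into a single additive $O(1)$ bound is precisely the open difficulty, and nothing in your sketch accomplishes it.

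The ``routine layer'' is essentially believable but also incomplete: the paper's Lemma~\ref{lem:dcompare} gives the lower distance comparison only from a fixed basepoint, and you need it uniformly over all pairs in $\HB(0,2\rho_r)$ with errors controlled well enough to survive a union bound; the subadditive-ergodic step needs the integrability and mixing hypotheses checked, which is doable but not done. Your fallback conclusion $D_r=o(\rho_r)$ is a genuinely weaker statement than tightness and does not establish the conjecture. In short, you have correctly located the core obstruction and produced a plausible reduction to it, but the conjecture remains unproved by your argument.
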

In a similar spirit, is it the case that for every vertex $x$ from $\HPV$ there is a $\delta_x$ almost surely finite so that all triangles with one vertex at $x$ are $\delta_x$-thin?  This is a natural ``anchored'' analogue of Gromov hyperbolicity.

While Theorem~\ref{thm:speed} shows positive speed for a large class of stationary random graphs, it is natural to ask whether the volume growth condition is necessary.  We wonder if it can be weakened or removed entirely.  As for the conclusions, it is natural to ask whether or not the heat kernel bound~\eqref{eq:hk} that holds for bounded degree graphs with positive anchored expansion extends to stationary random graphs with positive anchored expansion.

\section{Speed}


We begin with the proof of the criteria for positive speed, Theorem~\ref{thm:speed}.  Following Vir\'ag~\cite{Virag}, for any $S \subseteq \vso[G]$ finite, let $\isol[i][S] \Def i|S| - |\del S|.$  Define an \emph{isolated $i$-core} to be a finite set of vertices $S$ so that for all $A \subsetneq S$ $\isol[i][S] > \isol[i][A].$  As it is possible to take $A = \emptyset$ in this definition, it follows that an isolated $i$-core must have $\isol[i][S] > 0.$ 

Let $\islands$ denote the union of all isolated $i$-cores in $G.$  The following are easily checked:
\begin{proposition}
	For any locally finite, connected, simple graph $G,$
	\begin{enumerate}
		\item Any finite union of isolated $i$-cores is again an isolated $i$-core.
		\item If $\aec(G) > i > 0,$ then any vertex is contained in at most finitely many connected isolated $i$-cores.
		\item If $i = \aec(G) > 0,$ then $G \setminus \islands$ is a (nonempty) graph with edge isoperimetric constant $i.$
	\end{enumerate}
	\label{prop:basics}
\end{proposition}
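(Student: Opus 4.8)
The plan is to funnel all three parts through a single inequality: the functional $\isol[i][\cdot]$ is \emph{supermodular}. For finite $S,T\subseteq\vso[G]$ the edge cut obeys the standard submodularity $|\del(S\cup T)|+|\del(S\cap T)|\le|\del S|+|\del T|$ (check it edge-by-edge, classifying an edge by where its two endpoints sit among $S\setminus T$, $T\setminus S$, $S\cap T$, and the complement), while $|S\cup T|+|S\cap T|=|S|+|T|$; hence $\isol[i][S\cup T]+\isol[i][S\cap T]\ge\isol[i][S]+\isol[i][T]$. For part (1) it then suffices, by induction, to show that the union of two isolated $i$-cores $S,T$ is again one, i.e.\ $\isol[i][S\cup T]>\isol[i][A]$ for every $A\subsetneq S\cup T$. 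I would argue by cases on whether $S\subseteq A$. If $S\not\subseteq A$, then $A\cap S\subsetneq S$, so $\isol[i][S]>\isol[i][A\cap S]$, and supermodularity applied to $A$ and $S$ gives $\isol[i][A\cup S]\ge\isol[i][A]+\isol[i][S]-\isol[i][A\cap S]>\isol[i][A]$; if moreover $A\cup S\subsetneq S\cup T$, then $T\not\subseteq A\cup S$, and the same step applied to $A\cup S$ and $T$ gives $\isol[i][S\cup T]>\isol[i][A\cup S]>\isol[i][A]$, while if $A\cup S=S\cup T$ we are done. If instead $S\subseteq A$, then $T\not\subseteq A$ (as $A\ne S\cup T$), so $A\cap T\subsetneq T$, and supermodularity applied to $A$ and $T$ gives $\isol[i][S\cup T]=\isol[i][A\cup T]>\isol[i][A]$. (Taking $A=\emptyset$ along the way recovers $\isol[i][S\cup T]>0$.)

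For part (2), note that a connected isolated $i$-core $S$ through a vertex $v$ has $\isol[i][S]>\isol[i][\emptyset]=0$, hence $|\del S|<i|S|\le i\Vol(S)$, so $|\del S|/\Vol(S)<i$. Since $G$ is locally finite, for each size only finitely many connected vertex sets of that size contain $v$; so if $v$ lay in infinitely many connected isolated $i$-cores their sizes would be unbounded, producing connected sets through $v$ with sizes tending to infinity along which $|\del\,\cdot\,|/\Vol(\cdot)<i$, which (as $\aec(G)$ is root-independent) forces $\aec(G)\le i$, contradicting $\aec(G)>i$. For part (3) I would first observe that the connected components of an isolated $i$-core are themselves isolated $i$-cores — no $G$-edge joins two components of an induced subgraph, so $\isol[i][\cdot]$ is additive over components and the defining inequality descends to each — so $\islands$ is the union of the \emph{connected} isolated $i$-cores, and by part (1) any finite union of isolated $i$-cores lies in $\islands$. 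Then, for a finite $A\subseteq\vso[G]\setminus\islands$: $A$ is not an isolated $i$-core, so some $B\subsetneq A$ has $\isol[i][B]\ge\isol[i][A]$; iterating (each such $B$ still avoids $\islands$) down to $\emptyset$ gives $\isol[i][A]\le0$, i.e.\ $|\del A|\ge i|A|$ for every finite $A$ avoiding $\islands$ — the required isoperimetric lower bound for $G\setminus\islands$.

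The hard part will be the remainder of part (3): that $G\setminus\islands$ is \emph{nonempty}, and that its isoperimetric constant is exactly $i$ rather than larger. Since part (2) settles the case $\aec(G)>i$, the delicate case for nonemptiness is $i=\aec(G)$: here, if $\islands$ were all of $\vso[G]$, then using part (1) and connectedness of $G$ one can exhaust $\vso[G]$ by an increasing chain $R_0\subsetneq R_1\subsetneq\cdots$ of connected isolated $i$-cores through the root, along which $\isol[i][R_n]$ is strictly increasing; squeezing a contradiction with $\aec(G)=i$ out of this chain is, I expect, the real obstacle, and it is entangled with reconciling the $\Vol$-normalization in the definition of $\aec$ against the cardinality normalization built into $\isol$. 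For the matching upper bound I would feed the sets realizing the liminf in~\eqref{eq:ae} into the argument after removing their intersection with $\islands$, but one has to check that this removal does not move the limiting ratio, as it lowers both $|\del\,\cdot\,|$ and $\Vol(\cdot)$.
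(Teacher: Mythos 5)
The paper offers no proof of this proposition — it defers to Corollary 3.2 and Proposition 3.3 of Vir\'ag — so your argument has to stand on its own. Parts (1) and (2) do: the supermodularity of $\isol[i][\cdot]$ (from submodularity of the cut function) together with your case analysis on whether $S\subseteq A$ is a complete and clean proof of (1), and (2) is correct as written, using $|S|\le\Vol(S)$ and the root-independence of $\aec(G)$.

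Part (3) has two genuine gaps, only one of which you flag. First, the inequality you establish, $|\del A|\ge i|A|$ for finite $A$ disjoint from $\islands$, concerns the edge boundary of $A$ \emph{in $G$}. The isoperimetric constant of the graph $G\setminus\islands$ is computed with the boundary taken inside that induced subgraph, which discards every edge from $A$ into $\islands$; your bound only yields $i|A|$ minus the number of such edges, which can be vacuous. The missing idea is an absorption step: if $A\subseteq \vso[G]\setminus\islands$ had boundary in $G\setminus\islands$ of size less than $i|A|$, adjoin to $A$ a finite union $C$ of isolated $i$-cores covering all endpoints in $\islands$ of edges leaving $A$ ($C$ is a core by part (1)); since $A$ and $C$ are disjoint and every $A$-to-$\islands$ edge becomes internal, one computes $\isol[i][A\cup C]=\isol[i][A]+\isol[i][C]+2e(A,\islands)>\isol[i][C]$, and then a subset of $A\cup C$ maximizing $\isol[i]$ of minimal cardinality is an isolated core that cannot be contained in $C$, hence meets $A$ — contradicting $A\cap\islands=\emptyset$. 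Second, the nonemptiness you leave open is resolved precisely by the normalization mismatch you worry about: if every vertex lay in a core, build connected cores $\rho\in C_1\subsetneq C_2\subsetneq\cdots$ by repeatedly adjoining, for some neighbor $v$ of $C_n$ outside $C_n$, the component through $v$ of a core containing $v$ (a connected core by part (1) and your component observation). Each $C_n$ satisfies $|\del C_n|<i|C_n|$, while connectedness forces $\Vol(C_n)\ge 2(|C_n|-1)$, so $|\del C_n|/\Vol(C_n)<i|C_n|/(2|C_n|-2)\to i/2$, whence $\aec(G)\le i/2<i$ — a genuine contradiction with $\aec(G)=i$, not the inconclusive $\aec(G)\le i$ you feared. (The assertion that the constant is exactly $i$ rather than at least $i$ is immaterial: only the lower bound is used in the proof of Theorem~\ref{thm:speed}.)
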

\noindent See Corollary 3.2 and Proposition 3.3 of~\cite{Virag} for a proof.

Positive speed is essentially due to the fact that the induced walk on $G \setminus \islands$ has positive speed.  All that needs to be checked is that the walk returns to $G \setminus \islands$ frequently enough, and here we use stationarity in an essential way.  Indeed, without stationarity and without bounded degree, it can be shown that positive anchored expansion is insufficient for positive speed.

Let $T_1,T_2,\ldots$ be the times at which $X_k \in G \setminus \islands.$  We would like to show that $T_n / n \to c < \infty.$
Define $h_i\left( G, \rho \right)$ to be the indicator for the event that $\rho$ is in $G \setminus \islands.$  By the ergodic theorem, we have that
\begin{equation}
	\lim_{n \to \infty} \frac{1}{n+1} \sum_{j=0}^n h_i ( G, X_j) 
	=
	\Pr\left[ \rho \in G \setminus \islands \mid \filt \right],
	\label{eq:occupy}
\end{equation}
where $\filt$ is the invariant $\sigma$-algebra.
We first show that due to stationarity, this probability can not be $0$ for $i$ sufficiently small.  
\begin{lemma}
	\label{lem:occupy}
	Let $(G,\rho)$ be stationary.
	For any fixed $i > 0,$
let $\mathcal{E}_i \in \filt$ be the event that
\begin{align*}
  \Pr\left[ \rho \in G \setminus \islands \mid \filt \right] &= 0 \text{ and }\\
\Pr\left[ \aec(G) > i \mid \filt \right] &> 0.
\end{align*}
Then $\Pr\left( \mathcal{E}_i \right) = 0.$
\end{lemma}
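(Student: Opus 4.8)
The plan is a proof by contradiction. Suppose $\Pr(\mathcal{E}_i) > 0$. Since, in a connected graph, $\aec(G)$ depends only on $G$ and not on the root, the event $\{\aec(G) > i\}$ is invariant under the re-rooting map $(G,\rho)\mapsto (G,X_1)$, hence lies in $\filt$; so $\Pr[\aec(G) > i\mid\filt] = \one[\aec(G) > i]$, and on $\mathcal{E}_i$ we have $\aec(G) > i$ almost surely. On the other hand $\mathcal{E}_i\in\filt$ and $\Pr[\rho\in G\setminus\islands\mid\filt] = 0$ there, so $\Pr[\{\rho\in G\setminus\islands\}\cap\mathcal{E}_i] = 0$; that is, $\rho\in\islands$ a.s.\ on $\mathcal{E}_i$. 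The first real step is to promote this to the whole trajectory: because $((G,X_k))_{k\ge0}$ is a stationary chain and $\mathcal{E}_i$ is invariant, $\one_{\mathcal{E}_i}(G,X_k) = \one_{\mathcal{E}_i}(G,\rho)$ a.s., while $(G,X_k)\lawequals (G,\rho)$; hence for each $k$, $\Pr[\{X_k\notin\islands\}\cap\mathcal{E}_i] = \Pr[\{\rho\notin\islands\}\cap\mathcal{E}_i] = 0$, and a countable union over $k$ shows that, almost surely on $\mathcal{E}_i$, the walk stays in $\islands$ for all time.

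The combinatorial heart of the argument is the claim that, whenever $\aec(G) > i > 0$, the connected component $C$ of $\rho$ in the induced subgraph $G\vert_{\islands}$ is finite. Here I would use Proposition~\ref{prop:basics}(1): the union of two isolated $i$-cores is again an isolated $i$-core, and since (as one checks from the definition, boundaries of distinct components being disjoint) each connected component of an isolated $i$-core is itself an isolated $i$-core, $\islands$ is exactly the union of the connected isolated $i$-cores. If $C$ were infinite, pick a connected isolated $i$-core $S^{(1)}\ni\rho$ with $S^{(1)}\subseteq C$; given a connected isolated $i$-core $S^{(n)}\subseteq C$ with $\rho\in S^{(n)}$, the infiniteness and connectedness of $C$ give an edge of $C$ from $S^{(n)}$ to some $w\notin S^{(n)}$, and adjoining to $S^{(n)}$ the connected isolated $i$-core through $w$ (which lies in $C$) yields $S^{(n+1)}$: a connected isolated $i$-core with $S^{(n)}\subsetneq S^{(n+1)}\subseteq C$ and $\rho\in S^{(n+1)}$. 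Thus $|S^{(n)}|\to\infty$, but each $S^{(n)}$ has $\isol[i][S^{(n)}] > 0$, i.e.\ $|\del S^{(n)}| < i|S^{(n)}| \le i\Vol(S^{(n)})$, contradicting $\aec(G) > i$ once $|S^{(n)}|$ is large. Finally, since on $\mathcal{E}_i$ the graph $G$ is connected and infinite, the finite set $C$ has an edge to some vertex $w\notin C$; as $w$ is adjacent to a vertex of $\islands$ it cannot itself lie in $\islands$ (else it would be in $C$), so $w\in G\setminus\islands$.

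To finish, I would combine the two parts: on $\mathcal{E}_i$ the walk starts at $\rho\in C$ and, being confined to $\islands$, can never leave $C$, since any vertex of $\islands$ adjacent to $C$ lies in $C$. But $C$ is finite with a boundary edge to $w\notin\islands$, so from every $v\in C$ there is a path of length at most $|C|$ exiting $C$, of quenched probability bounded below by a positive constant depending only on $C$ (e.g.\ $(1+\max_{v\in C}\Deg v)^{-|C|}$); by the Markov property the probability of remaining in $C$ for $k|C|$ steps decays geometrically in $k$, so the walk almost surely eventually leaves $C$. This contradicts the previous paragraph, whence $\Pr(\mathcal{E}_i) = 0$. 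The step I expect to require the most care is the combinatorial claim that $G\vert_{\islands}$ has only finite components when $\aec(G) > i$: $\islands$ is a priori a global object, and one must see that Proposition~\ref{prop:basics}(1), together with the fact that a connected isolated $i$-core containing $\rho$ has boundary-to-volume ratio below $i$, forces it to be locally bounded. (Implicit throughout is that $G$ is infinite, the only case of interest; for an a.s.\ finite stationary graph one has $\islands = \vso[G]$ and the statement degenerates.)
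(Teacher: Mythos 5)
Your proof is correct, but it reaches the contradiction by a genuinely different route than the paper. The paper's argument runs along the trajectory: it invokes transience of the walk (a consequence of positive anchored expansion, via Thomassen) to get $|V_k|\to\infty$, and then uses Proposition~\ref{prop:basics}(1) to chain the cores $C_k\ni X_k$ into the increasing family $C_0\cup\cdots\cup C_k$ of connected isolated $i$-cores containing $\rho$, contradicting Proposition~\ref{prop:basics}(2). You instead isolate all the combinatorics into a deterministic claim --- that when $\aec(G)>i$ the component of $\rho$ in $G\vert_{\islands}$ is finite --- proved by essentially the same chaining mechanism (closure of isolated $i$-cores under unions, plus $\isol[i][S]>0$ forcing $|\del S|<i|S|\leq i\Vol(S)$, which the definition of $\aec(G)>i$ forbids along a growing connected sequence anchored at $\rho$); the probabilistic input is then only the elementary fact that simple random walk on an infinite connected graph almost surely exits any finite set. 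What your version buys is that it does not need Thomassen's transience result at all; what the paper's buys is brevity, since Proposition~\ref{prop:basics}(2) and transience are already on the table. Your intermediate observations (each connected component of an isolated $i$-core is again an isolated $i$-core, so $\islands$ is the union of the connected ones; a walk confined to $\islands$ cannot change components of $G\vert_{\islands}$) all check out. The degenerate finite-graph case you flag at the end is indeed implicitly excluded in the paper as well --- both proofs tacitly assume $G$ is infinite, which is the only case in which the lemma is applied.
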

\begin{proof}
By Proposition~\ref{prop:basics} when $\aec(G) > i$ there are only finitely many connected isolated $i$-cores containing $\rho.$ 
On the other hand, by stationarity for any $\omega \in \mathcal{E},$ $\Pr\left[ X_k \in G \setminus \islands \mid \filt \right]=0$ for all fixed $k \geq 0.$  In particular, for all $k,$ $X_k$ is contained in some connected, isolated $i$-core, $C_k.$  Let $V_k = \left\{ X_0,X_1,X_2,\ldots,X_k \right\}.$  As the graph has positive anchored expansion almost surely, random walk is almost surely transient and so $|V_k| \to \infty$ almost surely.  As isolated $i$-cores are all finite, the sequence $C_0, C_0 \cup C_1, C_0 \cup C_1 \cup C_2, \ldots$ almost surely contains infinitely many connected isolated $i$-cores containing $\rho,$ so that $\Pr( \mathcal{E}_i ) = 0.$ 
\end{proof}

\noindent The proof of Theorem~\ref{thm:speed} is now a simple consequence.
\begin{proof}[Proof of Theorem~\ref{thm:speed}]
  By assumption, $\aec(G) > 0$ almost surely.  Hence by Lemma~\ref{lem:occupy} and by letting $i$ run over $\{1/k\}_{k=1}^\infty,$ we may restrict to considering realizations of $\left( G, \left( X_i \right)_0^\infty \right)$ and $i>0$ for which
  \(
  \Pr\left[ \rho \in G \setminus \islands \mid \filt \right] > 0.
  \)
Recalling \eqref{eq:occupy}, we know that
\[
	\lim_{n \to \infty} \frac{1}{n+1} \sum_{j=0}^n h_i ( G, X_j) 
	=
	\Pr\left[ \rho \in G \setminus \islands \mid \filt \right] > 0.
\]
Hence, it also follows that
\[
	\lim_{n \to \infty} \frac{T_n}{n} = \frac{1}{\Pr\left[ \rho \in G \setminus \islands \mid \filt \right]} < \infty.
\]

Consider the induced random walk $Y_k =  X_{t_k}$ on $G \setminus \islands.$  From the Cheeger inequality on $G \setminus \islands$ (Proposition~\ref{prop:basics}), the induced random walk satisfies a heat kernel estimate of the form 
\[
  \sup_{ y \in G \setminus \islands}
  \Pr \left[
	Y_k = y \,|\, \sigma(G,\rho)
  \right] \leq q^k,
\]
for some $q = q(G,\rho) < 1.$  From the almost sure subexponential growth assumption, we have that 
\(
 M = \sup_{r \in \mathbb{N}} |B(\rho,r)|^{1/r}
\)
is almost surely finite.  Applying a union bound, we get that
\[
  \Pr \left[
	d_G( Y_k, \rho) \leq ck \,|\, \sigma(G,\rho)
      \right] \leq M^{ck} q^k.
\]
Thus, adjusting $c > 0$ to be sufficiently small, we get by Borel-Cantelli that $d_G(Y_k,\rho) > ck$ infinitely often almost surely, and hence we have
\[
  \liminf_{k \to \infty} \frac{d_G(\rho,X_{T_k})}{k} > c' > 0
\]
for some other $c'.$ 

There only remains to show that this implies the desired result on the speed of $X_k.$  Note that this result and Lemma~\ref{lem:occupy} together give that 
\[
  \liminf_{k \to \infty} \frac{d_G(\rho,X_{T_k})}{T_k} > 0.
\]
As the map $j \mapsto d_G(\rho, X_j)$ is $1$-Lipschitz we have that for $T_k \leq j \leq T_{k+1},$
\[
  \frac{d_G(\rho, X_{T_{k+1}})}{T_{k}} - \frac{T_{k+1} - T_k}{T_{k+1}}
  \leq \frac{d_G(\rho, X_j)}{j}.
\]
As $T_k / k$ converges to a positive constant almost surely, we have that $\frac{T_{k+1} - T_k}{T_{k+1}} \to 0$ almost surely, from which it follows that $\liminf_{k\to\infty} d_G(\rho,X_k)/k > 0$ almost surely. 

Meanwhile from the subadditive ergodic theorem~\cite[Theorem 5.3]{Krengel}, the limit $\lim_{k\to \infty} d_G(\rho, X_k)/k$ exists almost surely and is equal to some $\filt$-measurable random variable, which completes the proof.
\end{proof}

We also show that for a stationary graph with bounded harmonic functions, there must be infinitely many.
\begin{proof}[Proof of Theorem~\ref{thm:harmonics}]
  For a connected, locally finite graph $G$, let $\BHF(G)$ denote the space of bounded harmonic functions.  Let $S \subset \BHF(G)$ be the unit ball under the $L^\infty(G)$ norm.
  We will define a mapping from $\varphi: \vso(G) \to S$ that produces a harmonic function almost surely.  Fix $v \in \vso(G)$ and define
  \[
    R = \inf \left\{r \in \mathbb{Z}_{\geq 0} : \min_{ x \in B_G(v,r)} \inf_{ h \in S} |h(x)| < 1 \right\}.
  \]
  If $R = -\infty,$ then $S$ consists of harmonic functions that only take the values $\{\pm 1\}.$  As $G$ is connected, such harmonic functions must be constant.  Hence, if $R=-\infty,$ define $\varphi(v) = 1 \in \BHF(G).$  Otherwise, let $\varphi(v)$ be any arbitrary $h \in S$ with $\min_{ x \in B_G(v,R)} |h(x)| < 1.$  
  
  Note we may pick $\varphi$ to be equivariant with respect to rooted isomorphisms, i.e., if there is an isomorphism of rooted graphs $\psi : (G,\rho_1) \to (G,\rho_2)$ then $\varphi(\rho_1) = \varphi(\rho_2) \circ \psi^{-1}.$  This in particular assures that $\varphi$ is defined on rooted equivalence classes of graphs.  As a consequence, the sequence $(h_k)_{k=0}^\infty = (\varphi(X_k))_{k=0}^\infty$ is stationary.

  For every fixed $r>0,$ we define the variables
  \[
    Y_k^r = \max_{i=1,2,\ldots,r} | h_k(X_{k+i}) - h_k(X_k)|\,;
  \]
which makes $(Y_k^r)_{k=0}^\infty$ stationary. 
  
Suppose that $\BHF(G)$ were finite-dimensional.  Then, there would be a basis 
\[
f_1, f_2, \ldots, f_d
\]
of bounded harmonic functions, which we may take to each have $L^\infty(G)$ norm $1$.  There would also be a collection of $d$ vertices $x_1,x_2,\ldots, x_d$ so that the matrix $F_{i,j} = f_i(x_j)$ is invertible.
Let $h$ be any element of $\BHF(G)$ with $L^\infty(G)$ norm at most $1,$ and expand $h = c_1 f_1 + c_2f_2 + \cdots + c_d f_d.$  As $(c_i)_{i=1}^d$ can be computed by $F^{-1} \bar h,$ where $\bar h = (h(x_j))_{j=1}^d,$ we get that there is an $M > 0$ so that all $h \in \BHF(G)$ with $L^\infty(G)$ norm at most $1$ have coefficients in the $f_j$ basis bounded in absolute value by $M.$ 
  
From the definition of $M,$ we get that
\[
    \lim_{k \to \infty} 
    \sup_{\substack{h \in \BHF(G) \\ \|h\|_\infty \leq 1}}
    \max_{i=1,2,\ldots,r} | h(X_{k+i}) - h(X_k)| 
    \leq
    \lim_{k \to \infty} \max_{\substack{i=1,2,\ldots,r \\
    j=1,2,\ldots, d
    }} M|f_j(X_{k+i}) - f_j(X_k)|.
\]
  As each process $( f_j(X_k) )_{k=0}^\infty$ is a bounded martinagle, it converges almost surely.  Hence, for every fixed $r>0,$ it also follows that
  \[
    \lim_{k \to \infty} \max_{\substack{i=1,2,\ldots,r \\
    j=1,2,\ldots, d
    }} | f_j(X_{k+i}) - f_j(X_k)| = 0.
  \]
almost surely. Thus, we conclude that for every fixed $r > 0,$
\[
  \lim_{k \to \infty} Y_k^r = 0.
\]

Then for every fixed $r >0$ and every $\epsilon > 0,$ we have that 
\[
  \lim_{k \to \infty} \frac{1}{k+1}\sum_{i=0}^k \one[ Y_k^r > \epsilon ]  = 0.
\]
By the ergodic theorem, however, we also have that
\[
  \lim_{k \to \infty} \frac{1}{k+1}\sum_{i=0}^k \one[ Y_k^r > \epsilon ]  = 
  \Pr \left[
    \exists i \leq r ~:~ |h_0(X_i) - h_0(\rho)| > \epsilon
    \mid \filt 
  \right].
\]
Hence we have that $h_0(X_i) = h_0(\rho)$ almost surely, whence $R = -\infty,$ which implies that all bounded harmonic functions are constant.
\end{proof}

\section{Hyperbolic Poisson Voronoi tessellation}
\label{sec:hpv}

Let $\Htwo$ denote the hyperbolic plane.  For a good introduction to hyperbolic geometry and different models of the plane, see~\cite{Kenyon}.  
The Poincar\'e disk model of $\Htwo$ is given by the unit disk $\{ (x_1,x_2) : x_1^2 + x_2^2 < 1 \}$ in $\R^2$ together with the Riemannian metric
\[
  ds^2_{\Htwo} = 4 \frac{dx_1^2 + dx_2^2}{(1 -  x_1^2 - x_2^2)^2}.
\]
From this it follows that hyperbolic area measure is absolutely continuous to Lebesgue measure and has density given by
\[
  dA_{\Htwo} = 4 \frac{dx_1 dx_2}{(1 -  x_1^2 - x_2^2)^2}.
\]
We let $\PPP$ be the Poisson process on the open unit disk with intensity $\lambda \cdot dA_{\Htwo}.$ 
We always condition $\PPP$ to contain $0$.  Note that by Lemma~\ref{lem:triangletail}, adding this vertex alters at most finitely many edges of $\HPV$ or $\HPD$ almost surely, and hence positive anchored expansion holds for the process without the additional point.

Our proof of Theorem~\ref{thm:hpv} relies on one key observation about the Delaunay triangulation: the hyperbolic area of a contiguous collection of triangles that is adjacent to the origin is on the order of the number of triangles considered.
Specifically, we call a collection of Delaunay triangles \emph{strongly connected} if they form a connected set in $\HPD;$ equivalently, a collection of Delaunay triangles is connected if and only if the interior of the union of these triangles is connected.  Call them strongly connected to the origin if one of these is a triangle containing the point $0.$  Then, we have the following proposition.

\begin{proposition}
  There is a constant $c > 0$ and a $k_0 > 0$ random so that for all collections of Delaunay triangles $t_1,t_2, \ldots, t_k$ with $k > k_0$ that are strongly connected to the origin and whose union $\cup_{i=1}^k t_i$ is simply connected,
	\[
		\sum_{i=1}^k \VolH(t_i) > ck,
	\]
where for any Borel $U \subset \Htwo,$ $\VolH(U)$ denotes hyperbolic area of $U$.
\label{prop:strongarea}
\end{proposition}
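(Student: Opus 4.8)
The plan is to reduce the statement to an almost sure assertion about $\PPP$ by combining Euler's formula with a Poisson concentration estimate. Write $U=\bigcup_{i=1}^{k}t_i$. Since $U$ is a simply connected union of closed triangles, its induced triangulation of the disk has $k$ triangular faces, $E$ edges, and $V=V_{\mathrm{int}}+V_\partial$ vertices with $V-E+k=1$; since every interior edge lies in two of the $t_i$, every boundary edge in one, and $\partial U$ is a single cycle (so $E_\partial=V_\partial$), we also have $3k=2E-V_\partial$. Eliminating $E$ gives $k=2V_{\mathrm{int}}+V_\partial-2$, hence $V\ge (k+2)/2$. All $V$ of these vertices lie in $\PPP$, all lie in $U$, and $U$ is connected and contains the point $0$. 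Thus it suffices to show: almost surely there is a random $k_0$ so that for every $k>k_0$, any connected union $U$ of $k$ Delaunay triangles with $0\in U$ has $\VolH(U)\ge ck$ for a deterministic $c=c(\lambda)>0$.

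The first ingredient is an almost sure bound on the scale of the triangulation near $0$, coming from Lemma~\ref{lem:triangletail}. Using that lemma, the transitivity of the isometry group of $\Htwo$ (Remark~\ref{rem:same}), a union bound over a $1$-separated net of $\HB(0,r)$, and Borel--Cantelli, almost surely there is a random $R_0$ such that for all $r\ge R_0$ every empty hyperbolic disc meeting $\HB(0,r)$ has radius at most $2\log r$. Since a Delaunay triangle sits inside its (empty) circumscribed disc, every Delaunay triangle, and hence every Delaunay edge, incident to a point of $\HB(0,r)$ has diameter at most $4\log r$; chaining the $k$ triangles of an origin-anchored collection then confines $U$ to $\HB(0,\Lambda_k)$ with $\Lambda_k=O(k\log k)$ once $k$ passes a random threshold. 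The second ingredient is the Poisson upper tail, $\Pr[\,|\PPP\cap\HB(z,\delta)|\ge m\,]\le (e\lambda\VolH(\HB(z,\delta))/m)^m$, which, after a union bound over a $\delta$-net of $\HB(0,\Lambda_k)$ and summing over $k$ (Borel--Cantelli, now supplying $k_0$), shows that almost surely no hyperbolic disc of radius $\delta$ contained in such a ball holds an anomalously large number of points of $\PPP$.

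The hard part is extracting from ``$U$ contains at least $k/2$ points of $\PPP$'' the conclusion ``$\VolH(U)\ge ck$'' uniformly over the shape of $U$: a long thin chain of triangles has small area yet large diameter, which weakens the union bound, while a compact blob can conceal many tiny triangles. I would handle this with a case split. Fix a small $\delta_0$ and call $t_i$ \emph{fat} if its inradius is at least $\delta_0$; a fat triangle has area at least $2\pi(\cosh\delta_0-1)$, so if a fixed fraction of the $t_i$ are fat then $\VolH(U)\ge ck$ at once. If instead most $t_i$ are thin, I would use that a circle in $\Htwo$ has geodesic curvature bounded below by $1$: a Delaunay triangle whose vertices lie near a common geodesic and on an empty circumscribed disc must have circumradius that grows as the triangle gets thinner relative to its diameter, so the a priori $O(\log r)$ bound on empty-disc radii near $0$ forces the thin triangles at a given distance scale to have width, hence area, bounded below in terms of that scale; feeding this into the Poisson concentration estimate (which caps how many such triangles can crowd into a given scale) yields $\VolH(U)\ge ck$ for $k$ past a random $k_0$. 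The Euler computation and the Borel--Cantelli bookkeeping are routine; the quantitative geometry of thin hyperbolic Delaunay triangles and the matching packing estimate carry the real weight.
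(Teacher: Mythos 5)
Your reduction via Euler's formula to ``$U$ contains at least $(k+2)/2$ points of $\PPP$'' is correct and matches the paper's first step, and the scale bound $U\subseteq \HB(0,\Lambda_k)$ with $\Lambda_k=O(k\log k)$ obtained from Lemma~\ref{lem:triangletail} by a net-plus-chaining argument is also sound. The gap is in the ``hard part,'' and it is exactly the case you flag as hard. The circumradius argument only controls triangles that are thin \emph{relative to their diameter}: a triangle of diameter $d$ with an empty circumdisk of radius $\rho=O(\log r)$ indeed has area $\gtrsim d^3/\rho$, but this gives nothing when $d$ itself is small. The remaining case --- a constant fraction of the $\geq k/2$ vertices clustered in a single $\delta$-ball, contributing $\Theta(k)$ triangles of total area $O(\delta^2)$ --- is deferred to a Poisson concentration estimate that is quantitatively too weak to exist in the form you need. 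The union bound must range over a net of $\HB(0,\Lambda_k)$, whose volume is $e^{\Theta(k\log k)}$, while the probability that a fixed $\delta$-ball holds $m$ points is only $(C/m)^m=e^{-m\log m(1+o(1))}$; beating the union bound therefore forces $m\gtrsim k$. Indeed, by standard Poisson extremes, almost surely for all large $k$ there \emph{are} $\delta$-balls within $\HB(0,\Lambda_k)$ containing $\Theta(\Lambda_k/\log\Lambda_k)=\Theta(k)$ points, so no deterministic cap of the kind your argument requires holds. Ruling out that such a cluster participates in an origin-anchored collection then requires charging the cost of reaching it, and your sketch does not carry that accounting through.

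The paper avoids this entirely by working at the level of \emph{configurations} rather than locations: it proves a pointwise estimate (Proposition~\ref{prop:geometry}) that a uniformly random third point completes a triangle of area $\leq\theta$ with a finite circumdisk on a base of hyperbolic length $d$ with probability $\leq C\theta/(d\cdot\VolH(\HB(0,r)))$. The factor $1/d$ is the crucial device that couples ``the triangle is small'' to ``its base is short,'' which is itself improbable; this interplay is packaged into the tree-indexed stochastic domination $Z_i=\beta U_i^{\alpha/2}Z_{g(i)}^{1/\alpha}$ of Lemma~\ref{lem:domination}, whose sum has an exponential lower tail (Lemma~\ref{lem:Ztail}). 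The union bound is then taken over the $(Ck)^k$ planar triangulation schemes (Lemma~\ref{lem:triangulationschemes}) and over the $\binom{N_r}{\ell}$ choices of vertices, and it closes only because the per-configuration bound carries the factor $\VolH(\HB(0,r))^{-(\ell-2)}$ that cancels the Poisson density. Your proposal has no substitute for this mechanism, so as written the tiny-triangle/dense-cluster case is not handled and the proof does not close.
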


The proof of Theorem~\ref{thm:hpv} is a relatively straightforward consequence of this proposition and following observation:
\begin{lemma}
  For any finite set $S \subset \Htwo,$
  \[
    \VolH(\convH(S)) \leq 4\pi |S|,
  \]
  where $\convH(S)$ denotes the hyperbolic convex hull $S$.
  \label{lem:BE}
\end{lemma}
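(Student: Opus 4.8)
The plan is to induct on $|S|$ and at each step bound how much area the convex hull can grow when a single point is added. Write $S = S' \cup \{p\}$ with $|S'| = n-1$. The set $\convH(S) \setminus \convH(S')$ is contained in the union of the triangles obtained by coning $p$ to the (at most $n-1$) edges of $\partial \convH(S')$ that become "invisible" from $p$ — more simply, it is contained in the union of the geodesic triangles $\Delta(p, a, b)$ as $\{a,b\}$ ranges over consecutive vertices of $\convH(S')$. So it suffices to establish the base fact that \emph{every} hyperbolic geodesic triangle has area at most $\pi$ (this is the Gauss–Bonnet theorem: the area of a geodesic triangle equals $\pi$ minus the sum of its interior angles, hence is strictly less than $\pi$). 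Then a hull on $n$ points, triangulated by coning from one vertex, is a union of $n-2$ triangles, each of area $<\pi$, giving $\VolH(\convH(S)) < (n-2)\pi \leq 4\pi|S|$; the crude constant $4\pi$ leaves plenty of slack and also absorbs the degenerate small cases $|S| \le 2$.

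Concretely, the steps in order are: (i) dispose of $|S| \le 2$, where $\convH(S)$ is a point or a geodesic segment and has zero area, so the bound holds trivially; (ii) for $|S| = n \ge 3$, observe that $\convH(S)$ is a convex hyperbolic polygon with at most $n$ vertices (all drawn from $S$), and fix one vertex $v \in S \cap \partial\convH(S)$; (iii) triangulate $\convH(S)$ by drawing the geodesics from $v$ to every other vertex of the polygon — convexity guarantees these geodesics lie inside the hull and partition it into at most $n-2$ geodesic triangles; (iv) apply Gauss–Bonnet to each triangle to get area $< \pi$, and sum: $\VolH(\convH(S)) < (n-2)\pi < 4\pi n = 4\pi|S|$. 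One should remark that in $\Htwo$ the convex hull of a finite set is indeed a finite-sided geodesic polygon — this is because the convex hull of finitely many points is the intersection of finitely many half-planes (the supporting half-planes at the hull's extreme points), each half-plane bounded by a geodesic, and such an intersection is a geodesic polygon.

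The only genuinely non-routine ingredient is the geometric fact that $\convH(S)$ really is a geodesic polygon with vertex set contained in $S$, so that the coning triangulation makes sense; everything else reduces to Gauss–Bonnet. This is standard in hyperbolic geometry (e.g., as in the reference \cite{Kenyon} cited earlier for background), so I would state it and cite it rather than reprove it. I expect this identification to be the main (mild) obstacle; the area estimate itself is immediate once the triangulation is in hand, and the factor $4\pi$ is deliberately wasteful precisely so that no care is needed about exact vertex counts or about whether the hull is full-dimensional.
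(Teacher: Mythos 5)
Your proof is correct, but it is a genuinely different route from the paper's: the paper does not prove Lemma~\ref{lem:BE} at all, instead citing \cite[Theorem 1]{BenjaminiEldan}, precisely because that result holds in hyperbolic space of every dimension (with a dimension-dependent constant), which the authors need for the higher-dimensional discussion and conjecture. Your argument --- identify $\convH(S)$ as a compact geodesic polygon with vertex set contained in $S$ (cleanest via the Klein model, where hyperbolic convex hulls are Euclidean ones), fan-triangulate from one vertex into at most $|S|-2$ geodesic triangles, and apply Gauss--Bonnet to bound each triangle's area by $\pi$ --- is complete, elementary, and in fact yields the sharper bound $\VolH(\convH(S)) \leq \pi(|S|-2)$, of which $4\pi|S|$ is a generous weakening. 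What the citation buys that your proof does not is the generalization to $\mathbb{H}^d$: there, a triangulation of a convex polytope on $n$ vertices can require superlinearly many simplices, so ``linearly many cells, each of bounded volume'' fails and the linear volume bound becomes a genuine theorem; in dimension $2$ your direct argument is preferable and self-contained.
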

This additionally holds in higher dimensional hyperbolic spaces, with appropriately chosen constants in place of $4\pi;$ see ~\cite[Theorem 1]{BenjaminiEldan} for a proof.  This convex hull observation nearly immediately implies the expansion result for the Voronoi tessellation: for example, the following is true.
\begin{proposition}
  Suppose $S \subset \Htwo$ is a collection of points that are $1$-separated, and suppose that $G$ is the dual graph Voronoi tessellation with nuclei given by $S.$  Provided that all Voronoi cells are finite and that $G$ is locally finite,  
  \[
    \inf_{ \substack{V \subset S  \\  |V| < \infty} } \frac{ |\partial V|}{ | V | } > 0.
  \]
  \label{prop:prototype}
\end{proposition}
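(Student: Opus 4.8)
\emph{Strategy.} The plan is to run a hyperbolic‐area comparison. For a finite $V\subseteq S$ I would sandwich the hyperbolic area of the union of the Voronoi cells with nuclei in $V$ between a multiple of $|V|$ from below (using the $1$-separation) and a multiple of $|\del V|$ from above (using the convexity of the cells, Lemma~\ref{lem:BE}, and the isoperimetric inequality in $\Htwo$).

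Concretely, fix a finite $V\subseteq S$, write $n=|V|$ and $m=|\del V|$, and set $U=\bigcup_{v\in V}\mathrm{cell}(v)$. Since the nuclei are $1$-separated, $\HB(v,\tfrac12)\subseteq\mathrm{cell}(v)$ for each $v\in V$ (any $u\neq v$ in $S$ has $\dH(z,u)\ge\dH(u,v)-\dH(z,v)>\tfrac12>\dH(z,v)$ for $z\in\HB(v,\tfrac12)$), and these balls are pairwise disjoint, so $\VolH(U)\ge n\,\VolH(\HB(0,\tfrac12))=:n\kappa_0$ with $\kappa_0>0$. Next, the topological boundary of $U$ is exactly the union of the $m$ Voronoi edges separating a cell with nucleus in $V$ from a cell with nucleus outside $V$; hence the hyperbolic length of $\del U$ equals $\sum_{e\in\del V}\mathrm{len}(e)$. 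By the hyperbolic isoperimetric inequality (a region of area $A$ has boundary length $L$ with $L^{2}\ge A^{2}+4\pi A$, in particular $L\ge A$), $\sum_{e\in\del V}\mathrm{len}(e)\ge\VolH(U)\ge n\kappa_0$. So it suffices to establish the reverse estimate $\sum_{e\in\del V}\mathrm{len}(e)\le Cm$ with $C$ independent of $V$.

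If every boundary Voronoi edge had length bounded by an absolute constant this would be immediate, so the real content is the control of long boundary edges; here is where the convexity of cells and Lemma~\ref{lem:BE} enter. For a boundary edge $e$ between $\mathrm{cell}(v)$ ($v\in V$) and $\mathrm{cell}(w)$ ($w\notin V$), convexity gives $\mathrm{cell}(v)\supseteq\convH(\{v\}\cup e)$, and these triangles are pairwise disjoint over $e\in\del V$ and contained in $U$; on the other hand a long Voronoi edge forces one of the two Delaunay triangles at its endpoints to have large circumradius, i.e.\ a large empty hyperbolic disk near that endpoint. Applying Lemma~\ref{lem:BE} to the finite set $\widehat V$ of all vertices of Delaunay triangles incident to $V$ (for which $|\widehat V|\le n+m$, since each such vertex not in $V$ is a Delaunay neighbor of $V$) caps by $4\pi(n+m)$ the hyperbolic area available to accommodate simultaneously these empty disks, the cells of $V$, and the bordering cells. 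Turning this into $\sum_{e\in\del V}\mathrm{len}(e)\le Cm$ is the step I expect to be the main obstacle: in $\Htwo$ a convex cell can have arbitrarily large diameter — and hence arbitrarily long edges — while having arbitrarily small area, so one cannot argue edge-by-edge and must instead use globally that the cells tile $\Htwo$ (a large empty disk cannot sit where the nuclei of $V$ are dense), which is exactly the information a linear area bound like Lemma~\ref{lem:BE}, combined with the exponential isoperimetric profile of $\Htwo$, provides.

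Granting $\sum_{e\in\del V}\mathrm{len}(e)\le Cm$, we get $n\kappa_0\le Cm$, i.e.\ $|\del V|\ge(\kappa_0/C)|V|$ with $\kappa_0/C>0$ depending only on the separation constant and not on $V$; taking the infimum over finite $V$ proves the proposition. The same area-comparison scheme is the backbone of the proof of Theorem~\ref{thm:hpv}: there one works with $\VolH(S)$ (the sum of degrees) rather than $|S|$, one cannot assume a positive separation, and Proposition~\ref{prop:strongarea} is what supplies the lower bound "$\VolH(U)\gtrsim$ number of cells" on the relevant scales, after which the anchored (rather than absolute) nature of the conclusion absorbs the finitely many small exceptional configurations.
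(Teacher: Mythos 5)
Your lower bound ($\VolH(U)\ge |V|\,\VolH(\HB(0,1/2))$ via disjoint half-radius balls around the $1$-separated nuclei) is exactly the paper's, but your upper bound has a genuine gap, and the specific route you propose does not close. The chain ``$\sum_{e\in\partial V}\mathrm{len}(e)\ge \VolH(U)$ by the isoperimetric inequality, then $\sum_{e\in\partial V}\mathrm{len}(e)\le C\,|\partial V|$'' fails at the second step, which is false under mere $1$-separation: a Voronoi cell can be a long, thin convex polygon with very few sides and arbitrarily large perimeter (nothing prevents $S$ from having huge empty regions), so the total boundary length is not $O(|\partial V|)$. You correctly identify this as ``the main obstacle,'' but the workaround you sketch --- applying Lemma~\ref{lem:BE} to the set $\widehat V$ of nuclei of cells incident to $V$ --- only yields an area cap of $4\pi(|V|+|\partial V|)$, which cannot be played against the lower bound $|V|\,\VolH(\HB(0,1/2))$ because $\VolH(\HB(0,1/2))=2\pi(\cosh(1/2)-1)<4\pi$; the inequality $|V|\,\VolH(\HB(0,1/2))\le 4\pi(|V|+|\partial V|)$ is vacuous.

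The missing idea is to apply Lemma~\ref{lem:BE} not to nuclei but to the set $W$ of \emph{Voronoi vertices} (vertices of the $1$-skeleton) lying on $\partial\bigl(\cup_{p\in V}X_p\bigr)$. This set is controlled by $|\partial V|$ \emph{alone}: the boundary is a union of closed curves in the $1$-skeleton, so each $w\in W$ is an endpoint of at least two boundary Voronoi edges, whence $|W|\le|\partial V|$. Since each cell is a convex polygon with vertices in the $1$-skeleton, $\convH(W)=\convH\bigl(\cup_{p\in V}X_p\bigr)\supseteq\cup_{p\in V}X_p$, and Lemma~\ref{lem:BE} gives
\[
4\pi\,|\partial V|\;\ge\;4\pi\,|W|\;\ge\;\VolH(\convH(W))\;\ge\;|V|\,\VolH(\HB(0,1/2)),
\]
with no isoperimetric inequality and no control of edge lengths needed. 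In short: the linear-in-cardinality area bound for hyperbolic convex hulls is the whole proof once it is aimed at the boundary Voronoi vertices rather than at boundary length or at nuclei.
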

\begin{remark}
This proposition generalizes immediately to higher dimensional hyperbolic spaces.  
\end{remark}
\begin{proof}

  For any $p \in S$ be any point, and let $X_p$ be the Voronoi cell with nucleus $p.$  Let $V \subset S$ be any finite set, and
  let $W$ be the vertices of the $1$-skeleton of the Voronoi tessellation that are contained in $\partial ( \cup_{p \in V} X_p ).$  Note that the induced subgraph of this $1$-skeleton with vertices given by $W$ has minimum degree $2$ and hence $| \partial V | \geq | W|.$ 
  Then $\convH( W ) = \convH( \cup_{p \in V} X_p ).$  Each Voronoi cell $X_p$ must contain $\HB(p,1/2)$ and hence by Lemma~\ref{lem:BE}
  \begin{align*}
4\pi | \partial V | 
&\geq
4\pi | W | \\
&\geq \VolH( \convH(W) ) \\
&= \VolH(\convH( \cup_{p \in V} X_p )) \\
& \geq |V|\VolH(\HB(0,1/2)).
  \end{align*}
\end{proof}

In considering $\HPV$ or $\HPD,$ the philosophy of our approach is similar, and the proof of Theorem~\ref{thm:hpv} from Proposition~\ref{prop:strongarea} is not much more complicated.




\begin{proof}[Proof of  Theorem~\ref{thm:hpv}  from Proposition~\ref{prop:strongarea}]

\noindent \emph{Proof that $\HPV$ has positive anchored expansion.}
Let $k_0$ and $c$ be as in Proposition~\ref{prop:strongarea}.
Suppose that for some $k > k_0$ the vertices $v_1,\ldots,v_k \in \HPV$ form a connected subgraph $S$ of $\HPV,$ one of which is the Voronoi cell with nucleus at $0.$ 
Consider the subcomplex $S'$ of the Delaunay triangulation given by the union of all triangles containing vertices of $S$.  
Then every vertex of $S$ is in the interior of the union of triangles formed by $S'.$  Moreover, the vertices of $S$ are exactly the interior vertices of the complex $S'.$  The induced subgraph of $S'$ on the vertices of $S$ is connected, as these vertices were connected in $\HPV,$ and hence $S'$ has connected interior.  Equivalently, $S'$ is a strongly connected collection of triangles.

Observe that the convex hull of the vertices of $S'$ is the same as the convex hull of the vertices of $S' \backslash S$, as every vertex of $S$ is in the interior of $S'$ which is in turn in the interior of the convex hull of $S'.$  Further, the number of triangles $t$ in $S'$ is commensurate to the sum of degrees of $S;$ precisely
\(
t \geq \frac{1}{3}   \sum_{s \in S} \deg[\HPV](s) = \frac{1}{3} \VolV(S).
\)

On the one hand, Proposition~\ref{prop:strongarea} implies that the convex hull of the vertices of $S'\setminus S$ has area at least $c t$ for some constant $c$. On the other hand, by Lemma~\ref{lem:BE}, the area of the convex hull of $S' \setminus S$ is at most $4\pi' |S' \setminus S|.$  Combining these facts, we have that $|S' \setminus S| \geq \tfrac{c}{4\pi} \VolV(S).$

Since each vertex in $|S' \backslash S|$ is connected to $S$ by a boundary edge of $S$ in $\HPV$, we obtain $|\partial S| \geq |S' \setminus S|,$ and hence
\[
|\partial S| \geq |S' \setminus S|
\geq \tfrac{c}{4\pi} \VolV(S).
\]
This proves Theorem~\ref{thm:hpv} for $\HPV$.

\vspace{\baselineskip}

\noindent \emph{Proof that $\HPD$ has positive anchored expansion.}
Suppose once more that for some $k > k_0$ the vertices $d_1,\ldots,d_k \in \HPD$ form a connected subgraph of $\HPD,$ one of which is a triangle containing $0$. Let $t_1, \ldots, t_k$ denote the corresponding strongly connected Delaunay triangles in $\Htwo$.  If the subcomplex $S$ of the Delaunay triangulation with triangles $t_1, t_2, \ldots, t_k$ is not simply connected, let $S'$ be the smallest simply connected subcomplex of the Delaunay triangulation which contains $S.$  Then $\partial S \subset \partial S'$  and $\VolD(S) \leq \VolD(S').$  Hence $|\partial S|/\VolD(S) \geq |\partial S'| / \VolD(S'),$ and it suffices to assume that $S$ is simply connected.

Let $\X \subset \Htwo$ denote the set of vertices of the triangles $t_1,\ldots,t_k$, and let $\X' \subset \X$ be the set of points in $\X$ that are contained in the boundary of $\bigcup_{i=1}^k t_i$.  Observe that $\X$ and $\X'$ have the same convex hull, as an interior point of a set is in the interior of the convex hull of that set.

On the one hand, Proposition~\ref{prop:strongarea} implies that the convex hull of $\X$ has area at least $c\left|\X\right|$ for some constant  $c$.  On the other hand, by Lemma~\ref{lem:BE}, the convex hull of $\X'$ has area at most $4\pi|\X'|$ for any set of points $\X'\subset \Htwo$  Combined, these two inequalities give
\[
\frac{|\X'|}{|\X|} > \frac{c}{4\pi}.
\] 

As the boundary of the polygon $\bigcup_i t_i$ is a closed loop, there is a bijective correspondence between $\X'$ and boundary edges of $\bigcup_i t_i$. Further, the boundary edges of $\bigcup_i t_i$ are in bijective correspondence with the boundary edges of $S$ in $\HPD$, so $|\X'| = |\partial S|.$  

On the other hand, each vertex $d_i \in \HPD$ has degree $3.$  Thus we have that $|S| = 3k$.  As the embedding of $S$ gives a planar drawing of the graph, we have that by Euler's formula,
$e - k =  |\X| - 1,$ where $e$ is the number of edges in the complex $S.$  As every triangle contains exactly three edges and every edge is contained in at most $2$ triangles, $e \geq \tfrac 32 k,$ so that $|\X| \geq \tfrac 12 k = \tfrac 16 |S|.$  

Combining everything we have that
\[
  \frac{|\partial S|}{|S|}
  =
  \frac{|\X'|}{|S|}
  \geq
  \frac{|\X'|}{6|\X|}
  \geq
  \frac{c}{24\pi}.
\]

\end{proof}

\subsection{Preliminaries}
\label{sec:prelim}

The bulk of the work is to prove Proposition~\ref{prop:strongarea}, to which we devote the remainder of Section~\ref{sec:hpv}.  We will begin by setting some notation and frequently used identities. 
By an \emph{ordering} on a finite set $S$, we mean a bijection $\pi_S: \{1,2,\ldots,|S|\} \rightarrow S$.  We will often express an ordered set $S$ by listing its elements, using subscripts to indicate the elements' respective preimages under $\pi_S$.  
The hyperbolic distance between two points $x$ and $y$ is denoted as $\dH$, and the hyperbolic area of a region $R$ is denoted as $\VolH(R)$.  The hyperbolic disk centered at $x$ and with hyperbolic radius $r$ is denoted as $\HB(x,r)$.  We will frequently use the identity that
\begin{equation}
  \VolH(\HB(x,r)) = 2\pi(\cosh(r) - 1) \leq \pi e^r.
  \label{eq:ballvol}
\end{equation}
The circumference of the same ball is given by $2\pi \sinh(r).$

For three points $x,y,$ and $z$, the hyperbolic triangle they form is denoted by $\DeltaH$, and the angle at $x$ is denoted by $\angleH yxz$.  For three points $\{x,y,z\} \subset \Htwo,$ we let $\CDisc(x,y,z)$ denote the hyperbolic circumdisk through these points, if it exists.  As hyperbolic circles and Euclidean circles coincide in they Poincar\'e disk model, the hyperbolic circumcircle of these points in the disk model is just the Euclidean circumcircle, and the hyperbolic circumdisk exists and is finite if and only if the corresponding Euclidean circumcircle is contained in the unit disk.  We also use the notation $\CCtr(x,y,z)$ to be the hyperbolic center of this circumdisk, again if it exists.
Euclidean distances, areas, disks, triangles and angles are denoted the same way, except with the $\Htwo$ replaced by $\Etwo$.

We now give the proof of Lemma~\ref{lem:triangletail}.
\begin{proof}[Proof of Lemma~\ref{lem:triangletail}]
Suppose that some Delaunay triangle with one vertex the origin is not contained in $\partial \HB(0,r)$.  Then the circumcircle of this triangle contains a circle with radius $r/2$ and center on $\partial \HB(0,r/2)$.  Since the circumcircle cannot contain any points in the Poisson process $\PPP$, we deduce that there exists a disk with radius $r/2$ and center on $\partial \HB(0,r/2)$ containing no points of $\PPP$.  

We will define a \emph{finite} set $S$ of points on $\partial \HB(0,r/2)$ such that any disk with radius $r/2$ and center on $\partial \HB(0,r/2)$ contains $\HB(s,r/4)$ for some $s \in S$.  Let $S$ be a collection such that the hyperbolic distances between all neighboring pairs of points is exactly $r/4$, except possibly for one pair of neighboring points whose pairwise hyperbolic distance may be less than $r/4$.  Then any disk centered at a point on $\partial \HB(0,r/2)$ is at most distance $r/8$ from a point in $S$ and so contains $\HB(s,r/4)$ for some $s \in S.$  It remains to show that $S$ is not too large.

 To determine the size of $S$, we observe that, for a hyperbolic triangle $\DeltaH y0z$ with $\dH(0,y) = \dH(0,z) = r/2$ and $\dH(y,z) = r/4$, the angle $\alpha$ at $x$ satisfies
\[
\cos{\alpha} = \frac{\cosh^2\left(r/2\right) - \cosh{(r/4)}}{\sinh^2\left(r/2\right)} = 1 + \frac{1 - \cosh{( r/4)}}{\sinh^2\left(r/2\right)}
\]
by the law of cosines in $\Htwo$; so $\alpha > c e^{-3r/4}$ for some constant $c$.  Hence, \( |S|  < c' e^{3r/4} \) for some constant $c'$, and by a union bound, the probability that $\PPP \cap \HB(s,r/4)$ is empty for some $s \in S$ is at most
\(
c' e^{3r/4 - \lambda \pi e^{r/4}},
\)
completing the proof.
\end{proof}

For any $k \geq 3,$ define a \emph{triangulation scheme} to be a function $f : \{3,4,\ldots, k\} \to \binom{[k]}{2}$ with the properties:
\begin{enumerate}
  \item $f$ is injective on $\{4,\ldots,k\}$.
  \item $f(i) = \{f(i)_1, f(i)_2\} \in \binom{ [i-1] }{2}.$
  \item For all $j \in \{3,4,\ldots, k\},$ the edges $\{f(i)\}_{i=3}^{j}$ form a connected graph. 
  \item For every $j,$ the number of $i$ so that $j = \max(f(i))$ is at most $2.$
\end{enumerate}
If $\X = \{x_1,x_2,\ldots, x_k\}$ is an ordered collection of points in $\Htwo$, then, for each $i \in \{3,4,\ldots,k\}$, the vertices $x_i, x_{f(i)_1}$ and $x_{f(i)_2}$ define a closed hyperbolic triangle $\TFX$.  We will say that the pair $(\pi_{\X},f)$ is \emph{Delaunay} if every triangle $\TFX$ has a finite circumcircle, and \emph{planar} if all the triangles $\{\TFX\}_{i=3}^k$ have pairwise disjoint interiors.

A strongly connected collection of triangles whose union is additionally simply connected gives rise to a triangulation scheme:
\begin{lemma}
Let $\mathcal{T} $ be a strongly connected collection of triangles from $\HPD$, 
and let $t \in \mathcal{T}$. Denote by $\X$ the set of vertices of the triangles in $\mathcal{T}$. Almost surely, there is a triangulation scheme $f$ and an ordering $\{x_1,x_2,\ldots, x_k\}$ of $\X$ such that  $\TFX[3] = t$ and $\{\TFX\}_{i=3}^k \subseteq \mathcal{T}$. Note that the resulting pair $(\pi_{\X}, f)$ is both planar and Delaunay almost surely.
\label{lem:trischeme2}
\end{lemma}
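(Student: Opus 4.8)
The plan is to build the ordering and the triangulation scheme $f$ simultaneously by a greedy breadth-first exploration of the dual graph $\HPD$ restricted to $\mathcal{T}$, starting from the given triangle $t$. Since $\mathcal{T}$ is strongly connected, its dual graph $H$ (vertices $=$ triangles of $\mathcal{T}$, edges $=$ shared Delaunay edges) is a finite connected graph. I would fix a spanning tree of $H$ rooted at $t$, or more simply process the triangles of $\mathcal{T}$ in an order $t = T^{(1)}, T^{(2)}, \ldots, T^{(m)}$ such that each $T^{(j)}$ with $j \geq 2$ shares an edge with some earlier $T^{(j')}$, $j' < j$ (a BFS/DFS order on $H$ provides this). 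The three vertices of $t$ become $x_1, x_2, x_3$ in any order, and we set $f(3) = \{1,2\}$, so that $\TFX[3] = t$. Then, inductively, when we process $T^{(j)}$: it shares an edge $e$ with some previously processed triangle, and both endpoints of $e$ have already been assigned indices, say $a < b$; the third vertex of $T^{(j)}$ is either already indexed (if it coincides with a vertex of an earlier triangle) or new, in which case we give it the next available index $i$. In the new-vertex case we set $f(i) = \{a, b\}$. In the already-indexed case, the triangle $T^{(j)}$ contributes no new value of $f$, but this is fine: the triangulation scheme only needs to produce \emph{some} set of triangles $\{\TFX\}_{i=3}^k$ covered by $\mathcal{T}$, not all of $\mathcal{T}$; in fact $k = |\X|$ and we produce exactly $k-2$ triangles, which is automatically fewer than $|\mathcal{T}|$ when $\mathcal{T}$ has repeated-vertex adjacencies, and equal when not.

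Next I would verify the four defining properties of a triangulation scheme. Injectivity on $\{4,\ldots,k\}$ (property (1)) holds because each new index $i$ is introduced exactly once, at which moment $f(i)$ is defined once. Property (2), $f(i) \in \binom{[i-1]}{2}$, holds because the edge $e = \{a,b\}$ used to attach the new vertex $x_i$ connects two vertices assigned strictly earlier, hence with indices $< i$. Property (3), connectivity of $\{f(i)\}_{i=3}^j$ for each $j$, holds because the exploration is breadth/depth-first from $t$: at every stage the edges chosen form the edge set of a connected subgraph containing $x_1 x_2$ and growing by attaching one new vertex at a time along an existing edge. Property (4), that for each $j$ at most two indices $i$ have $j = \max(f(i))$, is the one requiring a geometric argument: in a planar Delaunay triangulation each edge $\{a,b\}$ lies in at most two triangles, so $\{a,b\}$ can serve as $f(i)$ for at most two distinct new apex vertices $x_i$; since $\max(f(i)) = j$ forces $f(i) = \{a, j\}$ for some $a < j$, and for a \emph{fixed} pair $\{a,j\}$ there are at most two such $i$, I need to rule out the scenario where many different $a$'s produce the maximum $j$. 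Here I would use that the union of the produced triangles is simply connected (given in the hypothesis via ``whose union is additionally simply connected''): in a simply connected triangulated disk, every vertex $x_j$ that is \emph{not} introduced last among its neighbors has the property that the edges $\{a, j\}$ with $a<j$ appearing as some $f(i)$ are bounded in number, and a careful accounting (each such edge $\{a,j\}$ is used by the triangle introduced when $x_i$ was added, and these triangles fan around $x_j$ with disjoint interiors) shows at most two such $i$ in total. The cleanest route is: after the whole construction, property (4) for index $j$ says that $x_j$ is the maximal-index vertex of at most two of the produced triangles; but the produced triangles that have $x_j$ as their maximal-index vertex are precisely those created \emph{after} $x_j$ was added whose attaching edge had $x_j$ as an endpoint — I would argue there are at most two of these by noting $x_j$, once added, lies on the boundary of the currently-built region, which is a simple arc/curve, so $x_j$ has at most two boundary edges available, each usable at most once as an attaching edge before $x_j$ becomes interior.

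Finally, the Delaunay and planar conclusions: each $\TFX$ is a triangle of $\mathcal{T} \subseteq \HPD$, and Delaunay triangles of the hyperbolic Poisson Delaunay complex have finite circumcircles by construction (a triple forms a Delaunay triangle iff the three points lie on the boundary of a finite empty hyperbolic disk, as recalled in the introduction), so $(\pi_{\X}, f)$ is Delaunay almost surely; and the triangles $\{\TFX\}_{i=3}^k \subseteq \mathcal{T}$ are distinct Delaunay triangles, which have pairwise disjoint interiors almost surely (two distinct Delaunay triangles overlap only on shared edges/vertices, a null event for interiors), so the pair is planar almost surely.

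I expect the main obstacle to be property (4) — the bookkeeping that each index serves as the maximum of at most two $f$-values. The other three properties are essentially formal consequences of doing a connected BFS/DFS exploration, but (4) genuinely uses the simple-connectivity hypothesis and the planarity of the Delaunay triangulation, and the argument has to be set up so that ``new vertex attached along an edge'' interacts correctly with ``which triangle gets that vertex as its top index.'' A secondary minor point is making sure the exploration order on $H$ can always be arranged so that the \emph{first} triangle is the prescribed $t$ and so that whenever a triangle is processed, the attaching edge already has both endpoints indexed — this is immediate once one observes that the attaching edge is shared with an already-processed triangle, all of whose vertices are indexed.
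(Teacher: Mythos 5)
Your construction (greedy BFS/DFS on the dual graph of $\mathcal{T}$, with $f(i)$ defined as the attaching edge of the triangle that introduces the new vertex $x_i$) is a genuinely different route from the paper's, but it has a real gap exactly where you flagged one: condition (4) of a triangulation scheme, and in fact condition (3) as well. Your justification of (4) --- that ``$x_j$, once added, lies on the boundary of the currently-built region, which is a simple arc/curve, so $x_j$ has at most two boundary edges available'' --- fails for an arbitrary exploration order, for two reasons. First, the partial unions $T^{(1)}\cup\cdots\cup T^{(m)}$ produced by BFS need not be simply connected (the exploration can wrap around a region that is filled in only later), so the boundary need not be a simple curve. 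Second, and more decisively, edges $\{x_c,x_j\}$ with $c<j$ can enter the processed region \emph{after} $x_j$ is introduced, via triangles processed redundantly (all three vertices already indexed, attached along the edge opposite $x_j$); each such edge then becomes available as the attaching edge of a further new vertex $x_i$ with $\max(f(i))=j$. Concretely, take a hub $V$ several of whose link neighbors $A,B,C,D$ are reachable from the starting triangle around the outside of the fan (hence indexed before $V$), while neighbors $Z,Y,W$ interleaved among them are not; after $V$ is introduced over $\{A,B\}$, the fan triangles with apexes $Z$, $Y$, $W$ can each be attached along an edge $\{V,\cdot\}$ whose other endpoint precedes $V$ (the third attachment enabled by redundantly processing the fan triangle on $\{C,D\}$ along its outer edge), giving three indices $i$ with the same $\max(f(i))$. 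Similarly for (3): a new vertex can be attached along an edge both of whose endpoints were themselves introduced as apexes and never occur in any earlier $f$-value, so $\{f(i)\}_{i=3}^{j}$ need not be connected.

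The paper avoids both problems by refusing to use an arbitrary exploration order. Lemma~\ref{lem:trischeme1} constructs, by an inductive boundary-peeling argument, an ordering of the \emph{triangles} in which every partial union $t_1\cup\cdots\cup t_i$ is simply connected and the set $\mathcal{S}$ of attaching edges is connected. Condition (3) then follows at once, and condition (4) is proved by showing that three triangles sharing the same maximal-index vertex would force some partial union to fail to be simply connected. To salvage your approach you would have to build these invariants into the choice of exploration order; a generic BFS/DFS does not supply them.
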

\noindent This is proven is section~\ref{sec:schemes}.

Our strategy to prove Proposition~\ref{prop:strongarea} is to show that for any finite collection of points $\X \subset \PPP$ containing $0$, any ordering $\pi_{\X}$ of those points, and any triangulation scheme $f$ for which $(\pi_{\X},f)$ is planar and Delaunay, 
we have \[
\sum_{i=3}^{\ell} |\TFX| > c (\ell-2)
\]
provided $\ell = |\X|$ is sufficiently large.  By virtue of Lemma~\ref{lem:trischeme2}, this will give the desired area bound.
 
\subsection{Properties of triangulation schemes}
\label{sec:schemes}

We split the proof of Lemma~\ref{lem:trischeme2} into two parts, the first of which is the following lemma.
\begin{lemma}
Let $\mathcal{T}$ be a strongly connected collection of $\ell$ triangles from $\HPD$ whose union is simply connected in $\mathbb{H}$.  Also, let $t_*$ be any triangle in $\mathcal{T}$, and let  $V$ be any subcollection of the set of boundary edges of the triangulated polygon formed by the triangles in $\mathcal{T}$.  Then there is an ordering $\{t_1,t_2,\ldots,t_\ell\}$ of the triangles in $\mathcal{T}$ with $t_1 = t_*$ so that the following two properties are satisfied:
		\begin{enumerate}
			\item
For each $1 \leq i \leq \ell$, the triangles $\{t_1,t_2, \ldots, t_i\}$ are strongly connected, and their union is simply connected in $\mathbb{H}$.
			\item
Let $J$ be the set of all $1 < i \leq \ell$ for which $t_i$ shares exactly one edge with $t_1 \cup t_2 \cup \cdots \cup t_{i-1}$.  Denote this shared edge by $e_i$.  Let \[ \mathcal{S} = \left\{ e_i : i \in J \right\} \] Then $\mathcal{S} \cup V$ is connected.
		\end{enumerate}
\label{lem:trischeme1}
\end{lemma}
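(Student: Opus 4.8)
The plan is to build the ordering $t_1,\dots,t_\ell$ greedily, one triangle at a time, while maintaining as an invariant that $t_1\cup\cdots\cup t_i$ is simply connected and strongly connected, and that the shared-edge set $\mathcal S_i=\{e_j: j\in J, j\le i\}$ together with $V$ stays connected. Since we are free to choose $t_1=t_*$, the base case is trivial. For the induction step, suppose $t_1,\dots,t_{i-1}$ have been chosen, let $P_{i-1}=t_1\cup\cdots\cup t_{i-1}$, and consider the set $A$ of triangles in $\mathcal T\setminus\{t_1,\dots,t_{i-1}\}$ that are adjacent (share at least one edge) with $P_{i-1}$; because $\mathcal T$ is strongly connected and we haven't exhausted it, $A$ is nonempty. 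The delicate point is to pick a triangle $t_i\in A$ so that (a) $P_{i-1}\cup t_i$ remains simply connected and (b) if $t_i$ shares exactly one edge $e_i$ with $P_{i-1}$, then $\mathcal S_{i-1}\cup V\cup\{e_i\}$ is still connected.

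First I would handle the simple-connectivity constraint. Adding a triangle $t_i$ that shares exactly one edge with $P_{i-1}$ always keeps the union simply connected (one glues a disk along a boundary arc), and similarly a triangle sharing two edges forming a path along $\partial P_{i-1}$ is fine; the only danger is adding a triangle that shares two or three edges with $P_{i-1}$ in a way that ``pinches off'' a hole, or that connects two far-apart boundary arcs. I would argue that such a bad triangle can always be deferred: order the triangles of $A$ so that we first exhaust all triangles sharing $\ge 2$ edges with $P_{i-1}$ whose addition is forced and safe, and otherwise pick a triangle sharing exactly one edge. Concretely, since $\bigcup\mathcal T$ is simply connected, one can show by a topological/combinatorial argument on the dual tree structure of a simply connected triangulated polygon that a greedy ``grow along the boundary'' procedure never creates a hole — there is always a choice of $t_i$ sharing a connected (hence one- or two-edge) arc with $\partial P_{i-1}$. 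This is essentially the statement that a simply connected triangulated polygon admits a \emph{shelling}, and I would cite or reprove that fact for $2$-dimensional simplicial disks.

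Second, to get property (2), I would couple the shelling order to $V$ as follows. At each step where we have a choice among several admissible triangles sharing exactly one edge, prefer one whose shared edge $e_i$ lies on the boundary loop ``adjacent'' to the already-built $\mathcal S_{i-1}\cup V$. Because the boundary edges of $P_{i-1}$ form a single cycle (simple connectivity), and $V$ is a subset of the global boundary cycle of $\bigcup\mathcal T$, one can track which boundary edges of the final polygon have already been ``absorbed'' and show the set of absorbed edges together with $V$ stays connected along the cycle. The key observation is that every time a triangle sharing exactly one edge $e_i$ is added, $e_i$ was a boundary edge of $P_{i-1}$, and its two endpoints each lie on other boundary edges of $P_{i-1}$; by always peeling from the end of an already-connected arc, $\{e_i\}$ attaches to $\mathcal S_{i-1}$. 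I expect the main obstacle to be making this ``peel from a connected arc'' scheme rigorous while simultaneously respecting the simple-connectivity requirement — i.e., showing the two greedy preferences (don't create holes; stay adjacent to $\mathcal S\cup V$) are never in conflict. I would resolve this by phrasing the whole construction in terms of the dual graph of $\mathcal T$ (a connected planar graph whose faces correspond to the holes, which here is a tree since $\bigcup\mathcal T$ is simply connected) and performing a depth-first traversal of that tree rooted appropriately relative to $V$, which automatically yields both a shelling and the connectivity of $\mathcal S\cup V$.
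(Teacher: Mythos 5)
Your overall strategy (a forward, greedy shelling) is reasonable in spirit, but it contains a genuine gap, and the device you invoke to close it rests on a false premise. The dual graph of $\mathcal{T}$ is \emph{not} a tree just because $\bigcup\mathcal{T}$ is simply connected: simple connectivity rules out holes, but not interior vertices, and every interior vertex of the complex contributes a cycle to the dual graph (namely the triangles of its star). Interior vertices are the generic situation here --- indeed the application of this lemma in the proof of Theorem~\ref{thm:hpv} for $\HPV$ is to complexes $S'$ engineered precisely so that all vertices of $S$ are interior. So there is no ``dual tree'' to traverse; and a depth-first traversal of a spanning tree of the dual graph is not automatically a shelling, since the new triangle can meet the current union in one edge plus the isolated opposite vertex (this happens when the traversal leaves the star of an interior vertex, wanders away, and returns to that star from the far side), which pinches off a hole. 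Beyond this, the step you yourself flag as the main obstacle --- showing that the no-hole constraint and the ``stay adjacent to $\mathcal{S}\cup V$'' preference can always be satisfied simultaneously, for an \emph{arbitrary} prescribed $V$ --- is exactly the content of the lemma, and the proposal does not supply an argument for it.

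For contrast, the paper runs the induction in reverse, which sidesteps both difficulties. It peels off a suitable \emph{boundary} triangle $t$ of $\mathcal{T}$ (an ear, or a triangle with exactly one boundary edge, chosen appropriately relative to $V$ and with $t\neq t_*$), applies the induction hypothesis to $\mathcal{T}\setminus\{t\}$ with a \emph{modified} edge set $V'$ --- adding the interior edge of a removed ear, deleting the peeled boundary edge when the remainder stays connected, or, when removal disconnects the complex, adding the connecting edge $g$ to the $V$-sets of both pieces and concatenating their orderings --- and then appends $t$ at position $\ell$. This explicit bookkeeping on $V$ is what transports the connectivity of $\mathcal{S}\cup V$ through the induction; some analogous mechanism is what is missing from your argument.
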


\begin{proof}

We proceed by induction on $\ell$.  For the base case, note that when $\ell = 1$, the result is satisfied since $t_1$ is connected and all $6$ subcollections of its edges are connected.  

Suppose that $\mathcal{T}$ is a strongly connected collection of triangles with $|\mathcal{T}| \geq 2$ from $\HPD$ and $V$ is any collection of boundary edges.  Let $P$ be the hyperbolic polygon given by the union of triangles in $\mathcal{T}.$  Suppose there is a triangle $t \in \mathcal{T}$ of which two of its sides are boundary edges of $P.$  Assume that $t_1  \neq t;$ an analogous argument covers the case that $t_* = t.$  Let $\mathcal{T}' = \mathcal{T} \setminus \{t\}.$  As $t$ only shares one edge with the rest of $\mathcal{T},$ it must be a leaf in any spanning tree in the dual graph on $\mathcal{T}.$  Hence, $\mathcal{T}'$ is strongly connected.  Moreover, since $t$ intersects the boundary of $P$, the triangles in $\mathcal{T}'$ also form a simply connected region in $\Htwo$.  Let $e$ be the interior edge of $t,$ and define $V'$ as the union of the edges of $V$ that are edges of some triangle in $\mathcal{T}'$ with $\{e\}.$  Apply the induction hypothesis to $(\mathcal{T}', V')$ to order the triangles in $\mathcal{T}'$  so that the properties listed in the lemma are satisfied. Extend this ordering to $\mathcal{T}$ by setting $t_{\ell} = t$.  It is easy to see that the ordering of $\mathcal{T}$ just defined satisfies the properties listed in the lemma.

Next, suppose that every triangle with an edge in the boundary of $P$ has exactly one such edge.  Let $\gamma$ be any closed loop in the boundary of the polygon $P,$ so that $\gamma$ is a union of boundary edges.  Let $e$ be a boundary edge that is not an isolated edge in $\gamma \cap V;$ there will always be at least $2$ such edges.  Thus we may choose the edge $e$ which is contained in a boundary triangle $t \neq t_*.$ Let $\mathcal{T}' = \mathcal{T} \setminus \{t \}.$  

There are two cases to consider.

\begin{itemize}
\item
  Suppose first that $\mathcal{T}'$ is strongly connected.  Then, set $V' = V \setminus \{e \}$ and apply the induction hypothesis.  As before, we extend the ordering of $\mathcal{T}'$ to one of $\mathcal{T}$ by setting $t_{\ell} = t$.  Property (1) of the lemma easily holds, so we just need to check that $\mathcal{T}$ satisfies property (2).  The set $\mathcal{S}$ defined in the lemma is the same for  $\mathcal{T}$ and  $\mathcal{T}'$; hence,  $\mathcal{S} \cup V'$ is connected.  Since $e$ is not an isolated edge in $\gamma \cap V$, we have that $\mathcal{S} \cup V$ is connected as well.
\item
Now, consider the case in which $\mathcal{T}'$ is not strongly connected.  In this case $\mathcal{T}$ can be decomposed as $\mathcal{T}_1 \cup \mathcal{T}_2' \cup \{t\}$ so that $\mathcal{T}_1$ and $\mathcal{T}_2'$ are both strongly connected collections of triangles, but the two collections are disjoint in $\HPD$.  Without loss of generality, let $\mathcal{T}_1$ be the component containing $t_*$.  Define $\mathcal{T}_2 = \mathcal{T}_2' \cup \{t\}.$  Let $g$ be the edge shared by $t$ and $\mathcal{T}_1,$ and $h$  the edge shared by $t$ and $\mathcal{T}_2'.$  Finally, set $V_1 = (V \cap \mathcal{T}_1) \cup \{g\}$ and  $V_2 = (V \cap \mathcal{T}_2 ) \cup \{g\}.$  

Apply the induction hypothesis to $(\mathcal{T}_1, V_1)$, setting $t_1  =t_* $.  Also apply the induction hypothesis to $(\mathcal{T}_2,V_2)$, setting $t_1 = t$.  Now, define an ordering of all the triangles in $\mathcal{T}$ by concatenating the ordered lists of triangles in $\mathcal{T}_1$ and $\mathcal{T}_2$, with the triangles in $\mathcal{T}_1$ coming first.  It is elementary to check that the resulting ordering satisfies property (1) of the lemma.  We turn to showing that $\mathcal{S} \cup V$ is connected; to prove this, it suffices to check that $g \in \mathcal{S}.$  Neither $e$ nor $h$ are edges in $\mathcal{T}_1.$  Hence, when the triangle $t$ appears in the ordering, the edge $g$ is added to $\mathcal{S}$.
\end{itemize}
This completes the induction.
\end{proof}

\begin{proof}[Proof of Lemma~\ref{lem:trischeme2}]

Apply Lemma~\ref{lem:trischeme1} with $V=\emptyset$ to obtain an ordering $\mathcal{T} = \{t_1,t_2,\ldots,t_\ell\}$ of the triangles of $\mathcal{T}$ with $t_1 = t$, so that the ordering satisfies the properties listed in the lemma.   Order the elements of $J$ as $i_1 < \cdots < i_r$, and define the mapping from $\mathcal{S}$ to $\X$ that sends each $e_{i}$ to the unique vertex of the triangle $t_{i}$ not contained in $e_i$.  This mapping is easily seen to be injective from the definition of $\mathcal{S}$.  Furthermore, every vertex except those in $t_1$ is in the range of this mapping. Hence, $k = r + 3 = |\mathcal{S}| + 3$.

Assign an ordering $\X = \{x_1,x_2,\ldots, x_k\}$ to the vertices of the triangles in $\mathcal{T}$ so that $t_1 = \{x_1,x_2,x_3\}$ and, for $3 < j \leq k$, the point $x_{j}$ is the unique vertex of triangle $t_{i_{j-3}}$ not contained in $e_{i_{j-3}}$.  Then, setting $f(3) = \{1,2\}$ and  $f(j) = e_{i_{j-3}}$ for $j > 3$, we have both $\TFX[3] = t_1$ and $\{\TFX\}_{i=3}^k \subseteq \mathcal{T}$.  It remains to prove that $f$ is a triangulation scheme. Conditions (1) and (2) for a triangulation scheme are easily satisfied, and condition (3) follows immediately from Lemma~\ref{lem:trischeme1}.  So we just need to check that $f$ satisfies condition (4).  

Suppose for contradiction that $f$ does not satisfy condition (4).  Then, for some $j$, we can find $p_1 < p_2 < p_3$ in $[k]$ with $\max(f(p_n)) = j$ for $n=1,2,3$.  Set $q_n = \min(f(p_n))$.  Since $t_1 \cup \bigcup_{2 \leq m < i_{j-3}} t_{m}$ is path-connected, we can find paths $\gamma_1,\gamma_2,\gamma_3$ in $t_1 \cup \bigcup_{2 \leq m <  i_{j-3}} t_{m}$ with endpoints $\{q_2,q_3\}$, $\{q_1,q_3\}$ and $\{q_1,q_2\}$, respectively.  Let $\gamma_1'$ be the closed loop in $t_1 \cup \bigcup_{2 \leq m < i_{p_3-3}} t_{m}$ obtained from $\gamma_1$ by adjoining to $\gamma_1$ the edges connecting $x_j$ to $x_{q_2}$ and $x_{q_3}$. Define $\gamma_2'$ and $\gamma_3'$ analogously.  Then, at least one of the triangles $t_{i_{p_n-3}}$ must be contained in one of the three regions bounded by the closed curves $\gamma_n'$.  But this means that  $t_1 \cup \bigcup_{2 \leq m \leq  i_{j-3}} t_{m}$ is not simply connected, contradicting the result of Lemma~\ref{lem:trischeme1}.
\end{proof}

As we will take a union bound over all ways to build a triangulation scheme for a collection of points, we additionally estimate the number of such schemes.
\begin{lemma}
	There is a constant $C > 0$ so that for any (unordered) collection $\X$ of $k$ points in $\Htwo$, the number of orderings $\pi_{\X}$ of $\X$ and triangulation schemes $f$ for which $(\pi_{\X},f)$ is planar is at most $(Ck)^k.$
	\label{lem:triangulationschemes}
\end{lemma}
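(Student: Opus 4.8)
The plan is to decouple the data $(\pi_{\X},f)$ into the \emph{unordered} set of triangles that the scheme produces and the ordering itself, and to observe that this set of triangles together with the ordering already determines the scheme. Write $k=|\X|$, and work throughout in the Beltrami--Klein model of $\Htwo$, in which geodesics are Euclidean segments and hyperbolic convexity is ordinary convexity; this lets us quote standard facts about planar straight-line triangulations.

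First I would record two structural observations. If $(\pi_{\X},f)$ is planar, then the triangles $\Delta_{\pi_{\X},f}(3),\dots,\Delta_{\pi_{\X},f}(k)$ are pairwise distinct --- the vertex of $\Delta_{\pi_{\X},f}(i)$ with the largest $\pi_{\X}$-index is $x_i$ by condition (2), and the $x_i$ are distinct --- so they form a set $\mathcal{S}$ of exactly $k-2$ geodesic triangles with pairwise disjoint interiors whose vertex set is all of $\X$. Conversely, fix such a set $\mathcal{S}$ and an ordering $\pi_{\X}$; then the scheme is forced, since $\Delta_{\pi_{\X},f}(i)$ must be the triangle of $\mathcal{S}$ whose highest-indexed vertex is $x_i$, and $f(i)$ is then the pair of its other two vertices (if for some $i$ no such triangle is uniquely determined, no scheme with this ordering produces $\mathcal{S}$). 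Consequently the number of planar pairs $(\pi_{\X},f)$ producing a given $\mathcal{S}$ is at most $k!\le k^k$.

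It remains to bound the number of sets $\mathcal{S}$ that can occur. Each such $\mathcal{S}$ is a family of non-crossing geodesic triangles with vertices in $\X$, and it can be completed to a triangulation $\mathcal{T}$ of $\convH(\X)$ without introducing new vertices: the connected components of $\convH(\X)\setminus\bigcup\mathcal{S}$ are geodesic polygons, possibly with polygonal holes, all of whose corners lie in $\X$, and (passing to the Klein picture if one likes) such regions admit a triangulation using only their own vertices. A triangulation of a set of $k$ points has fewer than $2k$ triangles, so $\mathcal{S}$ is one of at most $\binom{2k}{k-2}\le 2^{2k}$ subfamilies of $\mathcal{T}$. Invoking the classical fact that a set of $k$ points in the plane has at most $C_0^{\,k}$ triangulations for an absolute constant $C_0$, the number of possible $\mathcal{S}$ is at most $C_0^{\,k}2^{2k}$. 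Combining with the previous paragraph, the number of planar pairs is at most $C_0^{\,k}2^{2k}k!\le (4C_0 k)^k$, which is the claim with $C=4C_0$.

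The crux --- and the only place real content enters --- is the bound on the number of sets $\mathcal{S}$: a naive count of schemes is of order $k^{2k}$, and the entire saving comes from the fact that the number of crossing-free configurations on a fixed planar point set is merely exponential in $k$. Here I would reduce to the Euclidean statement via the Klein model and cite the standard exponential upper bound on the number of triangulations of a point set; the one point needing a little care is that a partial triangulation can always be completed to a full one without Steiner points, which is standard for polygons with holes.
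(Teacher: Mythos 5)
Your argument is correct and follows essentially the same route as the paper: both factor the count into (an exponential number of crossing-free geometric configurations on $\X$) times (roughly $k!$ choices of ordering), with the exponential factor supplied by a classical result. The paper cites the Ajtai--Chv\'atal--Newborn--Szemer\'edi bound on crossing-free subgraphs of a planar drawing and controls multiplicity via a $6$-to-$1$ ordered-tuple map, while you cite the exponential bound on triangulations of a point set (together with the standard completion of a partial triangulation) and observe that the scheme is determined by the unordered triangle set plus the ordering --- a slightly cleaner multiplicity argument, but the same underlying idea.
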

\begin{proof}
We begin by introducing some terminology.  A \emph{planar drawing} of a graph is a representation of the graph in the plane in which the vertices of the graph are mapped to distinct points in the plane, and edges of the graph are mapped to continuous paths connecting the corresponding pairs of vertices.  A planar drawing is called \emph{crossing-free} if there are no crossings between the paths in the plane representing the edges in the graph.  By \cite[Theorem 2]{ACNS}, there is a $C_1 > 0$ so that the number of crossing-free subgraphs of any planar drawing of a graph on $k$ vertices is at most $C_1^k.$	

The complete graph with vertices $\X$ has a natural planar drawing given by connecting each pair of points by its hyperbolic geodesic.  Every planar $(\pi_{\X},f)$ gives rise to an ordered tuple of triangles $\mathscr{S}_{(\pi_{\X},f)} = (\TFX[3],\TFX[4], \ldots, \TFX[k]).$  Note that the mapping $(\pi_{\X},f) \mapsto \mathscr{S}_{(\pi_{\X},f)}$ is $6$-to-$1$. (We may freely choose the ordering of the first three vertices of $\X$, but the rest is determined.)  

Furthermore, the union of triangles in $\mathscr{S}_{(\pi_{\X},f)}$ is a crossing-free subgraph of the natural planar drawing of the complete graph on $\X$.  By the theorem cited above, there are at most $C_1^k$ such subgraphs.  Now, it is possible that, for different planar pairs $(\pi'_{\X},f')$ and $(\pi''_{\X},f'')$, the unions of $\mathscr{S}_{(\pi'_{\X},f')}$  and $\mathscr{S}_{(\pi''_{\X},f'')}$  give the same crossing-free subgraph.  We claim that there is a constant $C_2 > 0$ so that the number of planar pairs $(\pi_{\X},f)$ that yield the same crossing-free subgraph is at most $C_2^k k!$.  This will complete the proof, since then $6 (C_1C_2)^k k^k$ is an upper bound for the number of orderings $\pi_{\X}$ and triangulation schemes $f$ for which $(\pi_{\X},f)$ is planar.

Any crossing-free subgraph produced by a planar pair $(\pi_{\X},f)$ is a planar drawing of contiguous hyperbolic polygons, not all of them triangles.  By Euler's formula, the number $t$ of triangles that appear in the subgraph is at most 
$ 1 + e - k,$
where $e$ is the number of edges in the subgraph.  As each edge came from a triangle in $\mathscr{S}_{(\pi_{\X},f)}$ , we must have $e \leq 3k$; so the total number of triangles is at most $2k+1.$  Thus, the number of ways of picking an ordered $k$-element collection of triangles from this graph is at most $\binom{2k+1}{k} k! \leq 4^k k!.$
\end{proof}

\subsection{Probabilistic estimates}
\label{sec:prob}
We begin by showing that for a fixed triangulation scheme, the sum of areas of its triangles stochastically dominates a certain tree-indexed product of uniform variables that we now describe.  For any $i \in \{2,3,\ldots, k\},$ let $g(i)$ denote $\max(f(i)).$  Define a directed graph $\GF$  on the vertex set $\{2,3,\ldots, k\}$ with edge set given by $\{(i,g(i)) : i \in \{3,4,\ldots, k\}\}.$  Since $f$ is a triangulation scheme, the in-degree of any vertex is at most $2$ and the out-degree of every vertex is $1.$  As this is a connected graph on $k-1$ vertices with $k-2$ edges, $\GF$ is a tree.  Further, the edges are directed in such a way that from every vertex there is a directed path to $2.$  

Let $\alpha > 0$ and $1 \geq \beta > 0$ be fixed and let $U_2,U_3,\ldots, U_k$ be a collection of i.i.d. $\Unif[0,1]$ variables.  Let $Z_2 = \beta U_2^{\alpha/2}$ and define inductively for $i > 2$
\begin{equation}
	Z_i = \beta U_i^{\alpha/2} Z_{g(i)}^{1/\alpha}.
	\label{eq:Zi}
\end{equation}

\begin{lemma}
  Let $x_1 = 0,$ and let $r,s \geq 1.$ Let $x_2$ be picked uniformly from $\HB(0,s)$ according to hyperbolic area measure, and independently pick i.i.d.\,points $x_3, \ldots, x_k$ uniformly from $\HB(0,r)$ according to hyperbolic area measure.  Fix a triangulation scheme $f,$ and let $\X = \{x_1,x_2,\ldots, x_k\}.$  For any $\alpha > 0$ there is a $\beta > 0$ so that with $Z_i$ as in \eqref{eq:Zi} and for all $r,s \geq 1,$
\[
	\Pr \left[
		\sum_{i=3}^k \VolH(\TFX[i]) \leq t
		\text{ and $(\pi_{\X},f)$ Delaunay}
	\right]
	\leq
	\frac{\Pr \left[
		\sum_{i=3}^k Z_i \leq t
	\right]}{
		\VolH(\HB(0,r))^{k-2}
	}.
\]
	\label{lem:domination}
\end{lemma}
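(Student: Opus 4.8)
The plan is to reveal the points $x_2, x_3, \ldots, x_k$ in the order dictated by the triangulation scheme $f$, and at each step bound the conditional probability that the new triangle is simultaneously small and Delaunay. Concretely, for $i \geq 3$, the triangle $\TFX[i]$ is formed by the new point $x_i$ and the two previously-revealed points $x_{f(i)_1}, x_{f(i)_2}$. Condition on the positions of all points $x_j$ with $j < i$ (in particular the two points $x_{f(i)_1}, x_{f(i)_2}$ are fixed, and one of them is $x_{g(i)}$). Then $x_i$ is uniform on $\HB(0,r)$ with respect to hyperbolic area. I would first establish a geometric estimate: \emph{if} the pair $(\pi_{\X},f)$ is to be Delaunay, then for each $i$ the triangle $\TFX[i]$ must have circumradius at least some controlled quantity (related to the distance between $x_{f(i)_1}$ and $x_{f(i)_2}$, hence to the area of an earlier triangle), and the event ``$\VolH(\TFX[i]) \le a$'' forces $x_i$ to lie in a region of bounded hyperbolic area. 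The key point is that the conditional probability of $\{\VolH(\TFX[i]) \le a_i\} \cap \{\text{circumradius condition}\}$ given the past is at most $C a_i^{\text{something}} / \VolH(\HB(0,r))$ times a factor depending on the ``base'' triangle edge $x_{g(i)}$, and this is exactly the law of $Z_i = \beta U_i^{\alpha/2} Z_{g(i)}^{1/\alpha}$ when one sets up the recursion correctly. So the first task is to find $\beta = \beta(\alpha)$ so that, conditionally on the past,
\[
\VolH(\TFX[i]) \cdot \one[(\pi_{\X},f)\text{ Delaunay as far as }i] \quad\text{stochastically dominates}\quad Z_i
\]
in the appropriate conditional sense, where $Z_{g(i)}$ plays the role of the already-revealed companion triangle's area.

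The second task is to assemble these one-step dominations into the global bound. Here I would use the tree structure of $\GF$ established just before the lemma: the recursion \eqref{eq:Zi} is indexed by a tree rooted (in the reverse-edge sense) at vertex $2$, and each $Z_i$ depends on the past only through $Z_{g(i)}$. Revealing $x_2, \ldots, x_k$ in increasing order of index, at step $i$ everything that $Z_i$ needs—namely $Z_{g(i)}$—has already been determined (since $g(i) < i$), and the fresh randomness $U_i$ can be taken to be a deterministic function of $x_i$ given the past (e.g. obtained from the conditional CDF of $\VolH(\TFX[i])$), independent of everything previously revealed. A martingale/conditioning argument (or an explicit coupling built layer by layer down the tree) then gives
\[
\Pr\!\left[\sum_{i=3}^k \VolH(\TFX[i]) \le t,\ (\pi_{\X},f)\text{ Delaunay}\ \Big|\ x_2\right]
\le \frac{\Pr\!\left[\sum_{i=3}^k Z_i \le t\ \big|\ Z_2\right]}{\VolH(\HB(0,r))^{k-2}},
\]
where the $\VolH(\HB(0,r))^{k-2}$ in the denominator comes from the $k-2$ times we convert ``$x_i$ lands in a set of area $A$'' into probability $A/\VolH(\HB(0,r))$. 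Integrating out $x_2$ (and $Z_2 = \beta U_2^{\alpha/2}$, which is dominated in the same way using that $x_2 \in \HB(0,s)$ and that this only costs a factor already absorbed, since $s$ does not enter the right-hand side) yields the claimed inequality uniformly in $r,s \ge 1$.

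The main obstacle I anticipate is the one-step geometric estimate — making precise, and quantitatively sharp enough, the statement that ``Delaunay forces the new triangle's circumradius to be at least comparable to the previous triangle's size, and small area then confines $x_i$ to a thin region.'' One has to handle the circumdisk $\CDisc(x_i, x_{f(i)_1}, x_{f(i)_2})$: being Delaunay means this disk is finite (contained in the Poincaré disk), and the two already-placed vertices lie on its boundary, so its radius is at least half their mutual distance; then requiring small area means $x_i$ is close to the geodesic through the other two points, but it also cannot be too close (else the circumdisk would be too small to be consistent with Delaunay-ness of the companion triangle sharing the long edge). Converting this into a clean bound of the form $\Pr[\cdot \mid \text{past}] \le \beta^{?} U\text{-power} \cdot Z_{g(i)}^{1/\alpha}/\VolH(\HB(0,r))$, with the exponents matching \eqref{eq:Zi}, is where the real work lies; everything after that is bookkeeping on the tree $\GF$ and a routine layered coupling. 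I would expect the choice of $\beta$ to emerge precisely from balancing the hyperbolic law-of-cosines / area estimates in this step, and the restriction $\beta \le 1$ to reflect that the confining region never has area exceeding $\VolH(\HB(0,r))$.
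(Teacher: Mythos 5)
Your architecture matches the paper's: reveal $x_2,\ldots,x_k$ in index order, prove a one-step conditional domination given $\filt_{i-1}=\sigma(x_1,\ldots,x_{i-1})$, and assemble along the tree $\GF$. But the two places you flag as ``where the real work lies'' are exactly where the proof lives, and your sketch of each contains a genuine gap. First, the geometric estimate is not quite what you describe. The set of $x_i$ with $\VolH(\TFX[i])\le\theta$ is a thin wedge emanating from the inverse point $1/x$ that reaches the ideal boundary, so it has \emph{infinite} hyperbolic area on its own; and the mechanism that saves you is not ``consistency with the companion triangle sharing the long edge'' but simply the finiteness of $\CDisc(x_i,x_{f(i)_1},x_{f(i)_2})$ for the new triangle itself, which confines $x_i$ to a horodisk-type region through the two base points. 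The paper's Proposition~\ref{prop:geometry} shows the intersection of these two regions has hyperbolic area at most $C\theta/\dH(x_{f(i)_1},x_{f(i)_2})$, and proving that occupies all of Section~\ref{sec:geometry}; it cannot be waved through.

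Second, and more structurally, your recursion does not close as stated. To match \eqref{eq:Zi} you need $Z_{g(i)}^{1/\alpha}$ to be a lower bound for the base-edge length $\dH(x_{f(i)_1},x_{f(i)_2})$ appearing in the denominator of the one-step estimate, but ``the already-revealed companion triangle's area'' gives no such control: a triangle of tiny area can have a very long edge, and the vertex $x_{\min f(i)}$ need not even belong to the triangle $\TFX[{g(i)}]$. The paper resolves this by replacing $\VolH(\TFX[i])$ with the truncated variable $Y_f(i)=\VolH(\TFX[i])\wedge Q_f(i)$, where $Q_f(i)$ is the minimum of $\dH(x_{f(j)_1},x_{f(j)_2})^{\alpha}$ over the (at most two, by condition (4) of a triangulation scheme) indices $j$ with $\max(f(j))=i$. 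This variable is still dominated by the area on the Delaunay event, is $\filt_i$-measurable, and satisfies $Y_f(g(i))^{1/\alpha}\le\dH(x_{f(i)_1},x_{f(i)_2})$ \emph{by construction}; the price is an extra union-bound term $\Pr[\dH(x_i,y)^{\alpha}\le t]\lesssim \VolH(\HB(0,t^{1/\alpha}))/\VolH(\HB(0,r))$, which is of the same order $t^{2/\alpha}=\Pr[U_i^{\alpha/2}\le t]$ as the area term. Without this truncation (or an equivalent device) the claimed stochastic domination by $\beta U_i^{\alpha/2}Z_{g(i)}^{1/\alpha}$ is simply false, so the gap is not mere bookkeeping.
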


The key geometric estimate we need for this lemma is the following:
\begin{proposition}
	Suppose that $r> 0$ is fixed.  Let $z$ be a point that is picked uniformly from the $\HB(0,r)$ according to hyperbolic area measure.  There is an absolute constant $C>0$ so that 
	\[
		\Pr \left[
			\left| \Delta(x,y,z) \right| \leq \theta
			\text{and $\CDisc(x,y,z)$ exists}
		\right] \leq \frac{C\theta}{\dH(x,y)|\HB(0,r)|}.
	\]
	\label{prop:geometry}
\end{proposition}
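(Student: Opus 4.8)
The plan is to reduce the statement to an elementary planar computation by fixing the two points $x,y$ and studying, as a function of the free point $z$ drawn uniformly from $\HB(0,r)$, the pair of events $\{|\DeltaH(x,y,z)| \le \theta\}$ and $\{\CDisc(x,y,z) \text{ exists}\}$. The hyperbolic area of $\HB(0,r)$ is a fixed normalizing constant (given by~\eqref{eq:ballvol}), so it suffices to show that the \emph{hyperbolic area} of the set of admissible $z$ — those with $|\DeltaH(x,y,z)| \le \theta$ and with finite circumdisk — is at most $C\theta / \dH(x,y)$.

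First I would set up coordinates so that $x$ and $y$ lie symmetrically about $0$ on a fixed hyperbolic geodesic $\ell$ (the hyperbolic transitivity of the automorphism group lets us do this without changing any of the quantities involved). The key geometric fact is that the hyperbolic area of a hyperbolic triangle with a fixed side $[x,y]$ is monotone in the distance from the opposite vertex $z$ to the geodesic $\ell$ through $x$ and $y$: more precisely, the ``width'' of the triangle — the perpendicular hyperbolic distance $h = \dH(z,\ell)$ — controls the area from below, with something like $|\DeltaH(x,y,z)| \ge c\, \sinh(h)\cdot(\text{something like } \tanh(\dH(x,y)/2))$, and in any case $|\DeltaH(x,y,z)|$ is bounded below by a constant times $\dH(x,y)$ times $h$ once $h$ is small (using $\dH(x,y) \ge \dH(0,\cdot)$-type normalization and $r\ge$ the relevant scale only enters through the cap of the ball). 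Hence $|\DeltaH(x,y,z)| \le \theta$ forces $z$ to lie in a thin ``slab'' $\{w : \dH(w,\ell) \le \delta\}$ with $\delta \lesssim \theta/\dH(x,y)$. The second step is to note that the requirement that $\CDisc(x,y,z)$ exist (equivalently, that the Euclidean circumcircle stays inside the Poincar\'e disk) confines $z$ to a bounded portion of that slab — concretely, the admissible $z$ lie within the slab intersected with $\HB(0,r)$, and inside that region the portion with $z$ within hyperbolic distance $\le \delta$ of $\ell$ has hyperbolic area $O(\delta \cdot \operatorname{length})$; but a cleaner route is: the intersection of the slab with $\HB(0,r)$ has hyperbolic area at most (width of slab) $\times$ (hyperbolic length of the part of $\ell$ inside the ball, which is $\le 2r$), giving area $\lesssim \delta r$. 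To kill the spurious factor of $r$ one instead observes that $z$ within the circumdisk condition plus $|\DeltaH|\le\theta$ actually traps $z$ near the \emph{segment} $[x,y]$ itself, not the whole geodesic: if $z$ is far along $\ell$ beyond $x$ or $y$ the triangle is ``long and thin'' and $\CDisc$ fails to be finite (the circumcircle bulges out of the disk) — alternatively, one absorbs the finite $r$-dependence into the constant if $r$ is regarded as fixed, which is exactly how the proposition is phrased (``Suppose $r>0$ is fixed''), so the bound $C\theta/(\dH(x,y)|\HB(0,r)|)$ with $C$ independent of $x,y,\theta$ but the area of the slab bounded by $\mathrm{const}(r)\cdot \delta$ is fine — but more likely the intended statement has $C$ absolute, which forces the ``near the segment'' refinement.

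The cleanest execution, which I would ultimately write up: condition on $z$, and compute in the upper-half-plane or disk model the exact area of $\{z : |\DeltaH(x,y,z)| \le \theta\} \cap \{\CDisc(x,y,z) \text{ finite}\}$. By the area–defect formula, $|\DeltaH(x,y,z)| = \pi - \angleH zxy - \angleH xyz - \angleH yzx$; fixing the base $[x,y]$, the locus $\{z : |\DeltaH(x,y,z)| = \theta\}$ is a hypercycle-type curve (a curve of constant ``angular defect''), and the region $|\DeltaH|\le\theta$ is the region between $[x,y]$ and this curve on one side. Its hyperbolic area can be computed or bounded directly and is $\le C\theta/\sinh(\dH(x,y)/2) \le C'\theta/\dH(x,y)$ (using $\dH(x,y)\ge 1$ or absorbing). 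Dividing by $|\HB(0,r)|$ from the uniform density of $z$ gives the claim. The main obstacle I expect is exactly the bookkeeping to get the \emph{right} denominator $\dH(x,y)$ rather than $\tanh(\dH(x,y)/2)$ or $\sinh(\dH(x,y)/2)$, and making sure the ``circumdisk exists'' condition is genuinely needed (it prevents $z$ from running off to infinity along $\ell$, which would otherwise make the slab have infinite area) — handling that interaction between the thin-triangle region and the circumdisk constraint carefully is the crux; everything else is a routine hyperbolic-trig estimate via the law of cosines already invoked in the proof of Lemma~\ref{lem:triangletail}.
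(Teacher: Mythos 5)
Your overall reduction -- fix $x,y$, and bound the hyperbolic area of $\{z : \VolH(\DeltaH xyz)\le\theta\}\cap\{\CDisc(x,y,z)\text{ exists}\}$ -- is the same as the paper's, and you correctly identify that the whole difficulty sits in the interaction between the constant-area region and the circumdisk constraint. But the quantitative mechanism you propose does not work. The claim that $\VolH(\DeltaH xyz)\le\theta$ confines $z$ to a slab $\{w:\dH(w,\ell)\le\delta\}$ with $\delta\lesssim\theta/\dH(x,y)$ is false once $\dH(x,y)$ is large: hyperbolic triangle areas are bounded by $\pi$, and for an apex at small height $h$ over a base of length $d$ the area scales like $h\cdot\min(1,d)$ (as $d\to\infty$ the doubly-ideal triangle with apex at height $h$ has area $\approx 2h$), not like $h\cdot d$. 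So the area constraint only gives $h\lesssim\theta/\min(1,d)$. Moreover, even granting your slab and restricting it to a neighborhood of the segment $[x,y]$, its area is $\asymp d\sinh(\delta)\asymp\theta$, not $\theta/\dH(x,y)$: the arithmetic of ``(width)\,$\times$\,(length)'' can never produce the extra factor $1/\dH(x,y)$, which in fact is only a genuine gain when $x$ and $y$ are close together. Relatedly, the level set $\{z:\VolH(\DeltaH xyz)=\theta\}$ is not a hypercycle of the geodesic through $x$ and $y$; by the Karp--Peyerimhoff lemma quoted in the paper it is (in the disk model, with $y=0$ and $x$ on the positive real axis) a Euclidean ray emanating from the inverse point $1/x$, i.e.\ an equidistant curve of a \emph{different} geodesic. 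Conflating the two is what produces the incorrect slab picture.

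The part you defer as ``the crux'' is, in the paper, essentially the entire proof. One must describe both regions explicitly -- the circumdisk region as the closure of $H\setminus F$, where $H$ is the horoball through $y=0$ and $x$ tangent to $\partial D$ at the ideal endpoint of the perpendicular bisector and $F$ is the disk with diameter $[0,x]$, and the small-area region as the set below the ray from $1/x$ at angle $\theta/2$ -- and then control their intersection in two regimes. For $x$ bounded away from $\partial D$ a Euclidean-area comparison suffices, but for $x$ Euclidean-close to $\partial D$ the curves $H$, $F$, and $\ell$ all meet near $x$, where the hyperbolic density blows up, and the paper needs the bound $\sin(2\varphi_*)\lesssim_\delta\theta(1-x)$ on the angle at which $\ell$ cuts $F$ together with an explicit rationalized integral to show that the sliver between $F$ and $H$ contributes only $O_\delta(\theta^2)$. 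Nothing in your proposal substitutes for these estimates, so as it stands the argument has a genuine gap both in its central inequality and in the boundary-regime analysis.
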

This proposition is the most involved ingredient in proving Theorem~\ref{thm:hpv}, and it is the portion of the argument that relies most heavily on the geometry of $\Htwo,$ and so we devote section~\ref{sec:geometry} to it.  Once this geometric argument is made, the remainder of this section uses mostly abstract probabilistic analysis to complete the proof of Proposition~\ref{prop:strongarea}.
\begin{proof}[Proof of Lemma~\ref{lem:domination}]

For a fixed triangulation scheme, define variables $Q_f(i)$ and $Y_f(i)$ for $i=2,3,\ldots,k$ by setting $Y_f(2)= Q_f(2) = \dH(x_1,x_2)^\alpha,$ setting
\[
	Q_f(i) = \inf
	\{
		\dH(x_{f(j)_1},x_{f(j)_2})^\alpha : j \in \{3,4,\ldots,k\}, i=\max(f(j))
	\},
\]
and setting
\[
	Y_f(i) = 
	\begin{cases}
		\VolH(\TFX[i]) \wedge Q_f(i) & \CDisc(x(i),x_{f(i)_1},x_{f(i)_2}) \text{ exists,}\\
		\infty & \text{else.}
	\end{cases}
\]
As $Y_f(i) \leq \VolH(\TFX[i])$ on the event that $(\X,f)$ is Delaunay, it suffices to show that there is a $\beta>0$ so that
\[
	\Pr \left[
		\sum_{i=3}^k Y_f(i) \leq Ct \text{ and } \sup_{3 \leq i \leq k} Y_f(i) < \infty
	\right]
	\leq
	\frac{\Pr \left[
		\sum_{i=3}^k Z_i \leq t
	\right]}{
		\VolH(B(0,r))^{k-2}
	}.
\]

For each $i \in \{3,4,\ldots,k\},$ let $\filt_i=\sigma(x_1,x_2,\ldots, x_i).$  Applying a union bound, we have that
\begin{align}
	\Pr\left[
		Y_f(i) \leq t \middle\vert \filt_{i-1}
	\right]
	&\leq
	\Pr\left[
		\VolH(\TFX[i]) \leq t \text{ and $\CDisc(x_i,x_{f(i)_1},x_{f(i)_2})$ exists} \middle\vert \filt_{i-1}
	\right] \nonumber \\
	&+\sum_{\substack{j \in [k], \\ i = g(j)}} 
	\Pr\left[
		\dH(x_{f(j)_1},x_{f(j)_2})^\alpha \leq t \middle\vert \filt_{i-1}
	\right].
	\label{eq:yf1}
\end{align}
To the first term in the bound, we apply Proposition~\ref{prop:geometry}.
For the distance terms, we have that
\[
	\sup_{y \in \HB(0,r)}
	\Pr\left[
		\dH(x_{i},y)^\alpha \leq t \middle\vert \filt_{i-1}
	\right] \leq \frac{\VolH(\HB(0,t^{1/\alpha}))}{\VolH(\HB(0,r))}.
\]
Further, as $f$ is a triangulation scheme, the number of $j$ for which $i = g(j)$ is at most $2.$
Thus for some absolute constant $C>0,$ \eqref{eq:yf1} becomes
\[
	\Pr\left[
		Y_f(i) \leq t \middle\vert \filt_{i-1}
	\right]
	\leq 
	2\frac{\VolH(\HB(0,t^{1/\alpha}))}{\VolH(\HB(0,r))}
	+ \frac{Ct}{\dH(f(i)_1,f(i)_2)\VolH(\HB(0,r))}.
\]

For $t$ on compact sets, we have that $\VolH(\HB(0,t^{1/\alpha}))t^{-2/\alpha}$ stays bounded.
Furthermore, on the event that $Y_f(g(i)) < \infty,$ we have that $Y_f(g(i)) \leq \pi.$  Hence, 
\[
	\VolH(\HB(0,Y_f(g(i))^{1/\alpha^2}t^{1/\alpha}))t^{-2/\alpha}
	\one[Y_f(g(i)) <\infty]
\]
stays bounded for compact sets of $t.$  Also on the event that $Y_f(g(i)) < \infty,$ we have that $Y_f(g(i))^{1/\alpha} \leq \dH(f(i)_1,f(i)_2).$ Therefore, for all $t \leq \pi,$ we have
\begin{align*}
	\VolH(\HB(0,r))
	\Pr\left[
		Y_f(i) \leq t(Y_f(g(i)))^{1/\alpha} \middle\vert \filt_{i-1} 
	\right] 
	\one[Y_f(g(i)) <\infty]
	\hspace{-3.5in}&\hspace{3.5in} \\
	&\leq 
	\left(2\VolH(\HB(0,Y_f(g(i))^{1/\alpha^2}t^{1/\alpha}))
	+ \frac{CtY_f(g(i))}{\dH(f(i)_1,f(i)_2)}\right)
	\one[Y_f(g(i)) <\infty]
	\\
	&\leq 
	Ct^{2/\alpha}.
\end{align*}
for some constant $C>0.$  

Adjusting constants, we have that there is an absolute constant $\beta_1 > 0$ so that for $i > 2,$
\[
	\Pr\left[
		Y_f(i) \leq t\beta_1(Y_f(g(i)))^{1/\alpha} \middle\vert \filt_{i-1}
	\right]\one[Y_f(g(i)) <\infty] \leq
	\frac{
	\Pr\left[
		U_i^{\alpha/2} \leq t
	\right]}
	{
	\VolH(\HB(0,r))
	}.
\]
For $i =2,$ $Y_f(2) = \dH(x_{2},x_{1})^\alpha$ satisfies
\[
	\Pr\left[
	  \dH(x_{2},x_{1})^\alpha \leq t
	\right] \leq \frac{\VolH(\HB(0,t^{1/\alpha}))}{\VolH(\HB(0,r))}.
\]
Hence as $r \geq 1,$ we can find an appropriate constant $\beta_2>0$ so that
\[
	\Pr\left[
	  \dH(x_{2},x_{1})^\alpha \leq \beta_2 t
	\right] \leq t^{2/\alpha}.
\]
Setting $\beta=\beta_1 \wedge \beta_2,$ we now have for all $i \geq 2,$
\[
	\Pr\left[
		Y_f(i) \leq t\beta(Y_f(g(i)))^{1/\alpha} \middle\vert \filt_{i-1}
	\right]\one[Y_f(g(i)) <\infty] \leq
	\frac{
	\Pr\left[
		U_i^{\alpha/2} \leq t
	\right]}
	{
	\VolH(\HB(0,r))
	}.
\]
The result now follows by a standard induction argument.

\end{proof}

\begin{lemma}
  For any $\alpha > 2, \beta > 0,$ and $M >0$ there is an $\epsilon(\alpha,\beta, M)>0$ so that for all $k,$
  \[
	\Pr \left[
		\sum_{i=3}^k Z_i \leq \epsilon (k-2)
	\right] \leq e^{-M(k-2)}.
  \]
  \label{lem:Ztail}
\end{lemma}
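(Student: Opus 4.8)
The plan is to bound the moment generating function of $-\sum_{i=3}^k Z_i$ —or more precisely to bound $\Exp\exp(-\theta \sum_{i=3}^k \log(1/Z_i))$ is the wrong route since the $Z_i$ are small; instead I will show that for a suitable $\theta>0$, $\Exp\exp(-\theta \sum Z_i)$ decays exponentially in $k$, and then apply Markov's inequality. The obstacle is that the $Z_i$ are not independent: they are coupled through the tree structure $\GF$ via the recursion $Z_i=\beta U_i^{\alpha/2}Z_{g(i)}^{1/\alpha}$. The key idea is to exploit the tree structure to factor the expectation.

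\medskip

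\noindent\textbf{Step 1: Reduce to a sum over the tree indexed by depth or by a leaf-to-root pass.} Recall $\GF$ is a tree on $\{2,\dots,k\}$ rooted at $2$, with every edge $(i,g(i))$ pointing toward the root and in-degree at most $2$. I will process the vertices in an order compatible with the tree so that $g(i)$ always precedes $i$'s children; concretely, order so that children come before parents is impossible for the recursion, so instead order so that parents precede children (a BFS/DFS order from the root). Then each $Z_i$ is a deterministic increasing function of $U_i$ and of $Z_{g(i)}$, which has already been realized.

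\medskip

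\noindent\textbf{Step 2: Bound a single conditional exponential moment.} The crucial estimate: there is $\theta_0>0$ and $q<1$ (depending on $\alpha,\beta$) so that for every $i$ and every value $z$ of $Z_{g(i)}$ (recalling $z\le\beta\le$ some constant, since all $Z_j$ are bounded because $U_j\le 1$),
\[
  \Exp\!\left[\exp\!\big(-\theta_0 Z_i\big)\,\middle|\,Z_{g(i)}=z\right]
  = \Exp\!\left[\exp\!\big(-\theta_0 \beta U^{\alpha/2} z^{1/\alpha}\big)\right] \le q.
\]
This holds because $Z_i=\beta U^{\alpha/2}z^{1/\alpha}$ with $U\sim\Unif[0,1]$ has positive probability of being bounded below by a positive constant \emph{uniformly in $z$}? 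That is false — as $z\to0$, $Z_i\to0$. So a naive bound fails; I must instead carry a bound that improves up the tree. The right approach: since $\alpha>2$, the exponent $\alpha/2>1$, and one checks that $\Exp[U^{\alpha/2}]=\frac{1}{1+\alpha/2}$, so on average $Z_i$ shrinks only by a power relative to $Z_{g(i)}^{1/\alpha}$, but because the recursion involves the $1/\alpha$-power, there is a genuine contraction toward a fixed point bounded away from $0$. The cleaner route: prove by induction down the tree (from leaves toward root, or root toward leaves) that $\Exp[\exp(-\theta \sum_{i \text{ in subtree of } v} Z_i)\mid Z_{g(v)}]$ is at most a function $\phi(Z_{g(v)})$ satisfying a fixed-point inequality, and use that the recursion $\phi\mapsto$ (integrate out one $U$ and multiply over the at-most-two children) has a solution bounded by $e^{-c}\cdot(\text{something})$ when the threshold $\epsilon$ is small.

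\medligeq

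\noindent\textbf{Step 3: Propagate through the tree and take Markov.} Writing $L(v)$ for the subtree rooted at $v$, define $\Psi_v(z)=\Exp[\exp(-\theta\sum_{i\in L(v)}Z_i)\mid Z_{g(v)}=z]$ for $v\ge 3$ (and incorporate $v=2$ separately). Because the subtrees hanging off $v$ are conditionally independent given $Z_v$, one has
\[
  \Psi_v(z)=\Exp_U\!\left[e^{-\theta \beta U^{\alpha/2}z^{1/\alpha}}\prod_{w:\,g(w)=v}\Psi_w\big(\beta U^{\alpha/2}z^{1/\alpha}\big)\right],
\]
with the product over at most two $w$. Iterating and using $z\le C_0$ always, one shows $\Psi_2\le \rho^{\,k-2}$ for some $\rho=\rho(\theta,\alpha,\beta)<1$, \emph{provided} $\theta$ is chosen appropriately — here the gain comes from the fact that even the "small-$z$" vertices contribute a definite discount once we also exploit that $\sum_i Z_i$ being small forces \emph{many} individual $Z_i$ to be small, which is unlikely in the product-of-uniforms structure. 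Finally,
\[
  \Pr\!\left[\sum_{i=3}^k Z_i\le\epsilon(k-2)\right]\le e^{\theta\epsilon(k-2)}\,\Exp e^{-\theta\sum Z_i}\le e^{\theta\epsilon(k-2)}\rho^{\,k-2},
\]
and choosing $\epsilon$ small enough that $\theta\epsilon+\log\rho\le-M$ gives the bound $e^{-M(k-2)}$.

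\medskip

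\noindent\textbf{Main obstacle.} The genuinely delicate point is Step 2/Step 3: establishing a \emph{uniform} (in $k$ and in the shape of the tree $\GF$) exponential decay for the tree-indexed functional despite the degeneracy of $Z_i$ as $Z_{g(i)}\to0$. I expect this to require tracking a two-parameter family of bounds (a multiplicative discount times a factor depending on $z^{1/\alpha}$) and verifying that the map induced by "integrate one uniform, multiply two copies" is a contraction on this family when $\alpha>2$ — the condition $\alpha>2$ is exactly what makes $U^{\alpha/2}$ have enough mass near $1$ for the iteration to close. The rest (Markov's inequality, choice of $\epsilon$) is routine.
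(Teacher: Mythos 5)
Your proposal is a plan rather than a proof: the decisive estimate --- a contraction for the tree-indexed Laplace functional $\Psi_v$ that is uniform in $k$ and in the shape of $\GF$ --- is precisely the point you flag as ``genuinely delicate'' and then leave unproven. As you yourself observe, the naive one-step bound $\Exp[e^{-\theta Z_i}\mid Z_{g(i)}=z]\le q<1$ fails as $z\to 0$, and the proposed repair (a two-parameter family of bounds closed under the ``integrate one uniform, multiply over at most two children'' map) is never carried out; without it, Step 3 is an assertion, not an argument. I would also flag that your stated heuristic for the hypothesis $\alpha>2$ (that $U^{\alpha/2}$ has enough mass near $1$) points the wrong way: increasing $\alpha$ pushes the mass of $U^{\alpha/2}$ toward $0$. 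The real role of $\alpha>2$ is combinatorial: iterating the recursion shows that $U_j$ enters $Z_i$ with exponent proportional to $\alpha^{-d(j,i)}$, while the number of descendants of $j$ at tree-distance $\ell$ is at most $2^\ell$ (in-degree at most $2$), so the total influence of each $U_j$ is controlled by $\sum_{\ell\ge 0}2^\ell\alpha^{-\ell}$, which converges exactly when $\alpha>2$.

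The paper's proof avoids the moment generating function of $\sum_i Z_i$ altogether and thereby never meets the degeneracy you struggled with. First, since $Z_i\le 1$, on the event $\sum_{i=3}^k\log(1/Z_i)<\tau(k-2)$ at least half the indices satisfy $\log(1/Z_i)\le 2\tau$, hence $\sum_i Z_i\ge\tfrac12(k-2)e^{-2\tau}$; so it suffices to bound $\Pr\left[\sum_i\log(1/Z_i)\ge\tau(k-2)\right]$. Second, unwinding the recursion along the path $\mathcal{P}_i$ from $i$ to the root gives $Z_i=\prod_{j\in\mathcal{P}_i}(\beta U_j^{\alpha/2})^{\alpha^{-d(j,i)}}$, so $\log(1/Z_i)\le C_{\alpha,\beta}+\tfrac12\sum_{j\in\mathcal{P}_i}\alpha^{1-d(j,i)}\log(1/U_j)$, and summing over $i$ each $\log(1/U_j)$ --- an $\Exponential(1)$ variable --- appears with total coefficient at most $\frac{\alpha/2}{1-2/\alpha}$ by the geometric-series bound above. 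The upper tail of a weighted sum of i.i.d.\ exponentials with uniformly bounded weights is then standard. If you want to rescue your route, you would essentially have to rediscover this logarithmic linearization inside your fixed-point analysis; passing to $\log(1/Z_i)$ is what makes the small-$z$ degeneracy disappear.
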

\begin{proof}
	Suppose that for some $\tau > 0,$ 
	\[
		\sum_{i=3}^k \log(1/Z_i) < \tau (k-2).
	\]
	Then the number of $i$ so that $\log(1/Z_i) > 2\tau(k-2)$ is at most $\tfrac 12 (k-2).$  Hence for at least $\tfrac 12 (k-2)$ many $i,$ $Z_i \geq e^{-2\tau(k-2)}$ implying the existence of the $\epsilon$ we desire.  Thus, it suffices to show that there is a $\tau(\alpha,\beta,M) > 0$ so that
	\[
		\Pr \left[
			\sum_{i=3}^k \log(1/Z_i) \geq \tau (k-2)
		\right] \leq e^{-M(k-2)}.
	\]

	Recall \eqref{eq:Zi}, which states that $Z_i = \beta U_i^{\alpha/2} Z_{g(i)}^{1/\alpha}.$  Let $\mathcal{P}_i$ be the set of vertices in the unique path $\GF$ connecting $i$ to $2,$ and let $d(x,y)$ be the graph distance between vertices $x$ and $y$ in $\GF.$  Then, we can write
	\[
	  Z_i = \prod_{j \in \mathcal{P}_i} (\beta U_j^{\alpha/2})^{\alpha^{-d(j,i)}},
	\]
so that
	\begin{align*}
	  \log(1/Z_i) 
	  &\leq 
	  \sum_{j \in \mathcal{P}_i} -(\log \beta) \alpha^{-d(j,i)}
	  +\sum_{j \in \mathcal{P}_i} -(\log U_j) \alpha^{1-d(j,i)}/2 \\
	  &\leq 
	  \frac{-(\log \beta)\alpha}{\alpha -1}
	  +\sum_{j \in \mathcal{P}_i} -(\log U_j) \alpha^{1-d(j,i)}/2.
	\end{align*}
	Therefore, we can express the sum of $\log(1/Z_i)$ as
	\[
	  \sum_{i=3}^k \log(1/Z_i)
	  \leq (k-3)C_{\alpha,\beta}
	  + \sum_{j=2}^k f(j,\alpha) \log(1/U_j)
	\]
	where $C_{\alpha,\beta} = \frac{-(\log \beta)\alpha}{\alpha - 1}$ and $f(j,\alpha)$ is some coefficient.  Since $\GF$ has maximum in-degree $2,$ we have the uniform upper bound
	\[
	  f(j,\alpha)
	  \leq \sum_{\ell=0}^\infty 2^\ell (\alpha/2) \alpha^{-\ell}
	  =\frac{\alpha/2}{1-2/\alpha},
	\]
	as the number of vertices $m$ for which there is a directed path from $m$ to $j$ of length $\ell$ is at most $2^{\ell}.$

	Thus, we have that 
	\[
	  \sum_{i=3}^k \log(1/Z_i)
	  \leq (k-3)C_{\alpha,\beta} + D_\alpha \sum_{j=2}^k \log(1/U_j)
	\]
	for some positive constants $C_{\alpha,\beta}$ and $D_\alpha.$  Recall that $\log(1/U_j)$ is distributed as an $\Exponential(1)$ variable, which has some finite exponential moments.  Thus it follows that there is $\tau'(M) >0$ so that for all $M>0$ and all $k \geq 2,$
	\[
	  \Pr\left[
		\sum_{j=2}^k \log(1/U_j) > \tau'(M)(k-1)
	  \right] \leq e^{-M(k-1)},
	\]
and therefore it is possible to choose $\tau$ appropriately.

\end{proof}

\subsection{Core geometric estimate: the proof of Proposition~\ref{prop:geometry}}
\label{sec:geometry}
 



In this section we will heavily rely on the identification of $\Htwo$ with the open unit disk $D$ in $\mathbb{C}$ via the Poincar\'e disk model.  By applying an isometry, we can assume without loss of generality that $y = 0$ and that $x$ lies on the positive real axis.  Let $S$ denote the set of points $z \in \Htwo$ such that $\CDisc(0,x,z)$ exists, and let $A$ be the set of  $z \in \Htwo$ for which $\VolH (\Delta(0,x,z)) \leq \theta$.  
In order to prove Proposition~\ref{prop:geometry}, it suffices to show that, under the conditions of the proposition, the hyperbolic area of $S \cap A$ is at most $ \frac{ C\theta}{x}$ for some constant $C$.

Having identified $\Htwo$ with the disk $D$, we can describe the regions $S$ and $A$ explicitly.  First, we describe the region $S$.  Let $G$ denote the hyperbolic geodesic ray from $x/2$ normal to the real axis and contained in the upper half-plane.  Let $q$ denote the limit point of $G$ on $\partial D$.  Then the horocycle $H$ through $0,x,$ and $q$ is precisely the limit  of the circles through $0, x$ and $p$ as $p$ approaches $q$ along the geodesic $G$. Moreover, the set of circumcenters $\CCtr(x,y,z)$ for $z \in S$ is exactly $G$.  Finally, let $F$ be the circle with diameter given by the segment from $0$ to $x$ in terms of which we can concisely describe $S$ as the closure of $H \setminus F.$

The region $A$ is characterized explicitly by the following lemma,  a special case of Theorem 7 of~\cite{KarpPeyerimhoff}.

\begin{lm}
For each $\alpha > 0$, the locus of points $y \in D$ in the upper half-plane with $\VolH(\Delta(0xy))= \alpha$ is given by the intersection of $D$ with the ray $\ell$ from $1/x$ that makes the angle $\alpha/2$ with the negative real axis.
\end{lm}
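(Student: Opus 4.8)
Retaining the identification of $\Htwo$ with the disk $D$, write $y = u + iv$ with $v > 0$. Since $x\in(0,1)$ is real and $|u|<1$, the number $1-xy$ has real part $1-xu\in(0,2)$ and imaginary part $-xv<0$; that is, $1-xy$ lies in the open fourth quadrant. The plan is to compute the area from the Gauss--Bonnet formula
\[
  \VolH(\Delta(0,x,y)) = \pi - \angle_0 - \angle_x - \angle_y,
\]
$\angle_0,\angle_x,\angle_y$ being the interior angles of the triangle, and then to recognize the resulting expression as a slope.

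To compute the interior angles I would use the disk automorphism $\mu_p(z)=\dfrac{z-p}{1-\bar p z}$, an orientation-preserving hyperbolic isometry that carries $p$ to $0$ and the two geodesics through $p$ to Euclidean diameters; being conformal, it turns the interior angle of the triangle at $p$ into the Euclidean angle at $0$ between the two corresponding radii. For the positively oriented triangle with vertices $v_1=0$, $v_2=x$, $v_3=y$ one finds (cyclic indices) that the interior angle at $v_j$ equals $\arg\bigl(\mu_{v_j}(v_{j-1})/\mu_{v_j}(v_{j+1})\bigr)$, taken in $(0,\pi)$, so that
\[
  \angle_0=\arg y,\qquad \angle_x=\arg\frac{-(1-xy)}{y-x},\qquad \angle_y=\arg\frac{y-x}{\,y(1-x\bar y)\,}.
\]

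I would then add these three arguments: the $y-x$'s cancel, the $y$'s cancel, and
\[
  \angle_0+\angle_x+\angle_y\;\equiv\;\arg\frac{-(1-xy)}{1-x\bar y}\pmod{2\pi}.
\]
Because $x$ is real, $1-x\bar y=\overline{1-xy}$, so the quotient has modulus $1$ and argument $\pi+2\arg(1-xy)$. Since $1-xy$ is in the fourth quadrant this value lies in $(0,\pi)$; the angle sum also lies in $(0,\pi)$, the triangle having positive hyperbolic area; hence the congruence is an equality, and
\[
  \VolH(\Delta(0,x,y))=-2\arg(1-xy)=2\arctan\frac{xv}{1-xu}=2\arctan\frac{v}{\,1/x-u\,}.
\]

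Finally I would read off the geometry. The point $1/x$ lies on the positive real axis to the right of $D$, and $\arctan\dfrac{v}{1/x-u}$ is exactly the angle that the segment from $1/x$ to $y=u+iv$ makes with the negative real direction (horizontal leg $1/x-u>0$, vertical leg $v>0$). Thus $\VolH(\Delta(0,x,y))=\alpha$ iff this angle is $\alpha/2$, i.e.\ iff $y$ lies on the ray $\ell$ from $1/x$ into the upper half-plane making angle $\alpha/2$ with the negative real axis; intersecting with $D$ adds no constraint, since $u<1<1/x$ always. The one delicate point is the branch-and-orientation bookkeeping in the angle formula; but it need never be resolved for the individual angles, only for the sum modulo $2\pi$, and the a priori bound $0<\angle_0+\angle_x+\angle_y<\pi$ then pins down the value.
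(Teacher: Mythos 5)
Your proof is correct. Note that the paper does not actually prove this lemma: it is quoted as a special case of Theorem 7 of Karp--Peyerimhoff, so you are supplying an argument where the paper only gives a citation. Your route --- Gauss--Bonnet in the form $\VolH(\Delta(0,x,y))=\pi-\angle_0-\angle_x-\angle_y$, computation of each interior angle via the disk automorphisms $\mu_p$, and the telescoping product giving $\VolH(\Delta(0,x,y))=-2\arg(1-xy)=2\arctan\bigl(v/(1/x-u)\bigr)$ --- is clean and self-contained, and the closed form makes the level sets visibly Euclidean rays through $1/x$, which is exactly the geometric content the paper needs. The one point requiring care, the branch of the argument, is handled correctly: for the positively oriented triangle $0,x,y$ each principal-value argument lies in $(0,\pi)$, the angle sum lies in $(0,\pi)$ because the hyperbolic area is positive, and $\pi+2\arg(1-xy)\in(0,\pi)$ because $1-xy$ lies in the open fourth quadrant (using $x\in(0,1)$, $u<1$, $v>0$), so the congruence modulo $2\pi$ is an equality. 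A quick sanity check confirms the formula: for small $x$ and $y=it$ it gives $2\arctan(xt)\approx 2xt$, matching the Euclidean area $\tfrac12 xt$ scaled by the conformal factor $4$ at the origin. For $\alpha\geq\pi$ both the locus and the intersection of $\ell$ with $D$ are empty, so the statement remains consistent there as well.
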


Having described the regions $S$ and $A$ explicitly, we are now ready to prove the proposition.  We divide our proof into two parts: one for $x$ less than or equal to some $\delta \in (0,1)$, and one for $x > \delta$.

In the case that $x \leq \delta,$ we will use estimates comparing hyperbolic and Euclidean volume.  Towards this end, we need the following elementary lemma.
\begin{lm}
Fix $0 < \delta < 1$.  For all $0 \leq x \leq \delta$ and $c < \frac{\sqrt{1 - \delta^2}}{2}$, we have the following:
\begin{enumerate}
\item
If $z \in H$ with $\Re{z} > 0$ and $\Im{z} \leq c$, then $\Re{z} \leq \frac{\delta}{2} + \sqrt{ c - c^2 }.$
\item
If $z \in H$ with $\Re{z} < 0$ and $\Im{z} \leq c$, then $\Re{z} \geq - \sqrt{ c - c^2 }.$
\end{enumerate}
	\label{lm:distancefromorigin}
\end{lm}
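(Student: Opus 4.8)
The plan is to recall the explicit description of the horocycle $H$ — it passes through $0$, $x$, and the ideal point $q$, which is the endpoint on $\partial D$ of the geodesic ray $G$ emanating from $x/2$ perpendicular to the real axis — and to translate the membership condition $z \in H$ into a concrete equation in the Euclidean coordinates of the Poincar\'e disk. Since $H$ is a circle internally tangent to $\partial D = \{|w|=1\}$ and passing through the two points $0$ and $x$ on the real axis, its Euclidean center and radius can be written down directly. A circle through $0$ and $x$ has center of the form $(x/2, k)$ for some real $k$ and radius $\sqrt{(x/2)^2 + k^2}$; requiring internal tangency to the unit circle, $\sqrt{(x/2)^2+k^2} + 1 = $ (distance from $(x/2,k)$ to $0$)$^{-1}$... more cleanly, tangency to $|w|=1$ forces $1 - \sqrt{(x/2)^2+k^2}$ to equal the distance from the origin $(0,0)$ to $(x/2,k)$, i.e. $1 - \rho = \rho$ where $\rho=\sqrt{(x/2)^2+k^2}$, giving $\rho = 1/2$ and hence $k = \sqrt{1/4 - x^2/4} = \tfrac12\sqrt{1-x^2}$. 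Thus $H$ is the Euclidean circle of radius $1/2$ centered at $\bigl(x/2, \tfrac12\sqrt{1-x^2}\bigr)$, which indeed lies inside $D$ and is tangent to $\partial D$.

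With this in hand, a point $z = (u,v) \in H$ satisfies $\bigl(u - x/2\bigr)^2 + \bigl(v - \tfrac12\sqrt{1-x^2}\bigr)^2 = 1/4$. I would solve for $(u - x/2)^2 = \tfrac14 - \bigl(v - \tfrac12\sqrt{1-x^2}\bigr)^2$ and bound the right-hand side. Writing $v = \Im z \le c$ and using $0 \le v$, one gets
\[
\bigl(u - \tfrac x2\bigr)^2 \;=\; v\sqrt{1-x^2} - v^2 \;\le\; v\,\bigl(\sqrt{1-x^2} - v\bigr)\;\le\; v \;\le\; c,
\]
where I used $\sqrt{1-x^2}\le 1$; more precisely, since $c < \tfrac12\sqrt{1-\delta^2} \le \tfrac12\sqrt{1-x^2}$, the function $v \mapsto v\sqrt{1-x^2} - v^2$ is increasing on $[0,c]$, so it is at most $c\sqrt{1-x^2} - c^2 \le c - c^2$. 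Hence $|u - x/2| \le \sqrt{c - c^2}$. In case (1), $\Re z = u > 0$ gives $u \le x/2 + \sqrt{c-c^2} \le \delta/2 + \sqrt{c-c^2}$, using $x \le \delta$; in case (2), $\Re z = u < 0$ gives $u \ge x/2 - \sqrt{c-c^2} \ge -\sqrt{c-c^2}$, using $x \ge 0$. This is exactly the claimed pair of inequalities.

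The only genuinely substantive step is deriving the Euclidean description of $H$ (center and radius $1/2$); everything after that is an elementary estimate on a quadratic in $v$. The condition $c < \tfrac12\sqrt{1-\delta^2}$ is what guarantees the relevant quadratic is monotone on the range $[0,c]$ and that the square roots involved are real, so I would be careful to invoke it at precisely the point where I replace $v\sqrt{1-x^2}-v^2$ by its value at the right endpoint. I do not anticipate any real obstacle here — the main thing to get right is bookkeeping with the two sign cases and making sure the bound $\sqrt{1-x^2}$ versus $\sqrt{1-\delta^2}$ is used in the direction that keeps the inequality valid for all $x \in [0,\delta]$.
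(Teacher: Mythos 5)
Your identification of $H$ as the Euclidean circle of radius $\tfrac12$ centered at $\bigl(\tfrac x2,\tfrac12\sqrt{1-x^2}\bigr)$ is correct, and this is essentially the same route the paper takes (it phrases the endgame as ``two applications of the Pythagorean theorem''). However, the key algebraic step is wrong: expanding
\[
\Bigl(u-\tfrac x2\Bigr)^2 \;=\; \tfrac14-\Bigl(v-\tfrac12\sqrt{1-x^2}\Bigr)^2
\;=\;\tfrac14 - v^2 + v\sqrt{1-x^2} - \tfrac{1-x^2}{4}
\;=\;\frac{x^2}{4}+v\sqrt{1-x^2}-v^2,
\]
you have dropped the term $\tfrac{x^2}{4}$ (your formula is what you would get if the radius squared were $\tfrac{1-x^2}{4}$ rather than $\tfrac14$). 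With the correct expansion the monotonicity argument only yields $|u-\tfrac x2|\le\sqrt{\tfrac{x^2}{4}+c-c^2}$, hence $\Re z\le \tfrac x2+\sqrt{\tfrac{x^2}{4}+c-c^2}$, which is strictly weaker than the claimed $\tfrac\delta2+\sqrt{c-c^2}$. This is not a repairable gap in your write-up so much as a sign that part (1) of the statement is itself false as written: take $\delta=x=0.6$ and $c=0.3<\tfrac12\sqrt{1-\delta^2}=0.4$; the point $z=\bigl(0.3+\sqrt{0.24},\,0.3\bigr)\in H$ has $\Re z\approx 0.790$, while $\tfrac\delta2+\sqrt{c-c^2}=0.3+\sqrt{0.21}\approx 0.758$. (The issue is that the center of $H_x$ sits at height $\tfrac12\sqrt{1-x^2}<\tfrac12$, so the chord of $H_x$ at height $c$ is longer than the chord of $H_0$ at the same height; one cannot account for this by merely translating by $\tfrac x2$.) Part (2) survives the correction, since $\tfrac x2-\sqrt{\tfrac{x^2}{4}+c-c^2}\ge-\sqrt{c-c^2}$ is equivalent to $x\sqrt{c-c^2}\ge0$.

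The right fix is to prove the corrected bound $\Re z\le\tfrac\delta2+\sqrt{\tfrac{\delta^2}{4}+c}$ (together with $\Re z\ge-\sqrt{c}$ and $|\Im z|\le c$ on the relevant region), which your computation delivers once the $\tfrac{x^2}{4}$ term is restored. This weaker conclusion is all that is used downstream: in the proof of Proposition~\ref{prop:geometry} the lemma only serves to show that $S\cap A$ lies in the Euclidean disk of radius $\tfrac{\sqrt2}{2}$ once $\delta$ and $c=\theta/x$ are small, and $\bigl(\tfrac\delta2+\sqrt{\tfrac{\delta^2}{4}+c}\bigr)^2+c^2\to0$ as $\delta,c\to0$, so the subsequent area estimate is unaffected. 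So: correct geometric setup, one arithmetic slip that happens to land exactly on the (unprovable) stated constant, and a statement that should itself be amended.
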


\begin{proof}
  Write $H_x$ for the circumcircle of $0,x$ and $q,$ i.e., the horocycle $H.$
Consider the region of $z \in \bigcup_{x' \in (0,\delta)} H_{x'}$ for which $\Im{z} < c$.  The point in this region with maximal real part lies on $H_{\delta}$; the point in this region with minimal real part lies on $H_{\delta}$.  Our restriction on $C$ implies that both these extremal points have imaginary part exactly $c$.  The result then follows from two applications of the Pythagorean theorem.
\end{proof}

 Observe that $\ell$ intersects the vertical line $\Re{(z)} = -1$ at the point $-1 + \left(1 + \frac{1}{x}\right) \tan{(\theta/2)} i$.  For all $x<1$ and all $\theta<\frac{\pi}{2}$, we have
\[
\left(1 + \frac{1}{x}\right) \tan{(\theta/2)} \leq \frac{2\theta}{x}.
\]
Thus, all points in $S \cap A$ have imaginary part at most $\frac{2\theta}{x}$.
It follows from Lemma~\ref{lm:distancefromorigin} that we can choose $0 <\delta$ and $c < 1$ sufficiently small that for $x \leq \delta$ and $\frac{\theta}{x} < c$ the region $S \cap A$ is contained in the disk centered at the origin with Euclidean radius $\frac{\sqrt{2}}{2}$.  In this case,
\begin{align*}
\VolH(S \cap A) 
&= \int_{S \cap A} \frac{4r}{(1-r^2)^2} dr d\theta\\
&\leq 16 \int_{S \cap A} r dr d\theta\\
&= 16 \VolE(S \cap A).
\end{align*}
Since $S \cap A$ is contained in the rectangle with vertices  $\pm 1$ and $\pm 1 + i\frac{2\theta}{x}$, its Euclidean area is bounded above by $\frac{4\theta}{x}$.  

We deduce that Proposition~\ref{prop:geometry} holds (for some choice of the constant $C$) whenever both $x < \delta$ and $\frac{\theta}{x} < c$.  In fact, we can discard the latter condition by stipulating that $C > c$.  

Thus, we have reduced to the case $x > \delta$, where $\delta$ is the value chosen above.  As in the previous case, it suffices to prove the proposition for $\frac{\theta}{x} < c$ for any choice of constant $c>0$.  Observe that, for $x > \delta$, the condition $\frac{\theta}{x} < c$ holds as long as $\theta < c \delta$.  Hence, it is enough to prove the following claim:

\begin{claim}
Fix $\delta \in (0,1)$, and suppose $x > \delta$.  Then we can choose a constant $\theta_{\delta}$ and $C>0$ so that, for all $\theta < \theta_{\delta}$, the $\VolH(S \cap A)< C \theta.$
\label{claim:xdelta}
\end{claim}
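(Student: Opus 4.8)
Looking at Claim~\ref{claim:xdelta}, I need to bound the hyperbolic area of $S \cap A$ where $x > \delta$ is bounded away from $0$. The strategy is to exploit the explicit descriptions of $S$ (the closure of $H \setminus F$, where $H$ is the horocycle through $0, x, q$ and $F$ is the circle with diameter from $0$ to $x$) and $A$ (the region below the ray $\ell$ from $1/x$ making angle $\theta/2$ with the negative real axis).

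\medskip

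\textbf{Proof plan.} The key point is that as $\theta \to 0$, the region $S \cap A$ collapses toward a portion of the boundary of $S$ near the segment $[0,x]$. I would first observe that, since $x > \delta$, the horocycle $H$ and the circle $F$ are uniformly nondegenerate: $H$ stays a bounded distance from $\partial D$ near the real axis (its Euclidean ``height'' over $[0,x]$ is controlled from above and below in terms of $\delta$), and the circle $F$ has Euclidean radius $x/2 \geq \delta/2$. Consequently the region $S$ near the real axis between $0$ and $x$ is a thin ``lune'' between the arc of $H$ and the arc of $F$, and its Euclidean width in the vertical direction is comparable to the horizontal distance to the nearest of the endpoints $0$ or $x$. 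Next I would use the explicit form of $A$: the ray $\ell$ emanates from $1/x < 1/\delta$ and for small $\theta$ lies very close to the segment from $1/x$ to the real axis near $0$; in particular $\ell$ intersects the real axis near $0$ and the sliver $A$ has vertical extent $O(\theta)$ above the real axis at horizontal coordinate $0$, growing linearly as we move toward $1/x$. The intersection $S \cap A$ is therefore confined to a neighborhood of the point $0$ (and possibly of $x$), of Euclidean diameter $O(\theta)$, with Euclidean area $O(\theta^2)$ — or, being more careful, of Euclidean area $O(\theta)$ since one dimension is the full relevant scale and the other is $O(\theta)$.

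\medskip

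\textbf{Converting to hyperbolic area.} The subtlety is that the candidate region may come close to $\partial D$ near the ideal point $q$, where the conformal factor $(1-|z|^2)^{-2}$ blows up. I would handle this by splitting $S \cap A$ into a part bounded away from $\partial D$ and a part near $\partial D$. On the part with $|z| \leq 1 - \eta$ for fixed $\eta = \eta(\delta)$, the conformal factor is bounded by a constant $C(\delta)$, so the hyperbolic area is at most $C(\delta)$ times the Euclidean area, which is $O(\theta)$ by the discussion above. For the part near $\partial D$: I claim that for $\theta$ small this part is empty, because the ray $\ell$ starting at $1/x$ with small slope $\tan(\theta/2)$ reaches the unit circle only very far from the relevant portion of $H$; more precisely, $S \cap A$ forced to have $|z|$ close to $1$ would require the imaginary part of $z$ to be bounded by the $\ell$-constraint $\Im z \lesssim \theta \cdot (\text{horizontal distance})$ while simultaneously $z$ lies on $H$ near $q$, and a direct computation with the explicit horocycle shows this is incompatible once $\theta < \theta_\delta$. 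So the dangerous region contributes nothing.

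\medskip

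\textbf{Main obstacle.} The genuinely delicate step is the geometric bookkeeping near the endpoints of the segment $[0,x]$ — confirming that $S \cap A$ really does have Euclidean area $O(\theta)$ with the implied constant uniform over $x \in (\delta, 1)$, and that no part of it escapes toward $\partial D$. This requires writing the horocycle $H$ through $0, x, q$ explicitly in the Poincar\'e disk (its Euclidean center and radius as functions of $x$), parametrizing the arc of $H$ lying outside $F$, and intersecting with the half-plane below $\ell$. The calculation is elementary but must be done with care to get uniformity in $x$; I expect this to be where the author spends most of the effort. Once these explicit regions are pinned down, the area estimate and the passage to hyperbolic area via the bounded conformal factor are routine.
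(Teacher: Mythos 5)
There is a genuine gap in your argument, and it sits exactly where the paper has to do its hardest work. Your reduction rests on the claim that $S \cap A$ either lies in a region $\{|z| \le 1-\eta(\delta)\}$ where the conformal factor $4/(1-|z|^2)^2$ is bounded by a constant depending only on $\delta$, or else is empty. That is false near the vertex $x$: the hypothesis is only $x > \delta$, with no upper bound keeping $x$ away from $1$, so the point $x$ itself — and with it the nonempty sliver of $S \cap A$ lying just outside the circle $F$ and inside the horocycle $H$ near $x$ (the paper's region $R_2$) — can be arbitrarily close to $\partial D$. On that piece the conformal factor is of order $(1-x)^{-2}$ and is not controlled by $\delta$, so ``hyperbolic area $\le C(\delta)\cdot$ Euclidean area'' fails there. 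Your emptiness argument for the part of $S\cap A$ near $\partial D$ only addresses the neighbourhood of the ideal tangency point $q$ of the horocycle, which is indeed harmless (this matches the paper's treatment of $R_1$: one checks $w_1,w_2$ lie in the disk of radius $1/\sqrt 2$ and then bounds the Euclidean sector area by $\tfrac{\theta}{4}(1+1/x)^2$, giving $O_\delta(\theta)$).

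To close the gap one needs a quantitative cancellation between the smallness of the region near $x$ and the blow-up of the metric. The paper obtains this by first proving, via the law of sines in the Euclidean triangle with vertices $w_0$, $x/2$, $1/x$, that the angle $\varphi_* = |\angle x\, 0\, w_0|$ satisfies $\varphi_* \le C'_\delta\,\theta(1-x)$, and then integrating the hyperbolic area element exactly in polar coordinates over the region near $x$; the extra factor $(1-x)$ in the angular width beats the $(1-x)^{-2}$ blow-up after the explicit integration and yields a bound $C''_\delta\,\theta^2$ for that region. Without some substitute for this step your estimate cannot be made uniform over $x \in (\delta,1)$, so the proposal as written does not prove the claim.
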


We begin our proof of this claim with a diagram, Figure~\ref{fig:diagram}.  
As in the figure we can assume the ray $\ell$ intersects $F$ at two distinct points by choosing $\theta_{\delta}$ sufficiently small.  Label these points as $w_0$ and $w_1$ where $w_0$ is the first intersection of $\ell$ with $F$.  Also let $w_2$ be the second intersection of $\ell$ with $H.$ Define $R_1$ to be the set of $z \in S \cap A$ with $\Re z \leq \Re w_1,$ define $R_2$ to be the set of $z \in S \cap A$ with $\Re z \geq \Re w_0$ and define $R_3$ as the set of $z \in S \setminus R_2$ below the Euclidean line $0$ and $w_0.$  





\begin{figure}
\begin{tikzpicture}
\clip (-2,-2) rectangle (9,5);
\draw (0,0) circle [radius=4];
\draw [thick, <->] (-5,0) -- (9,0);
\draw (1,0) circle [radius=1];
\draw (1,1.7320508075) circle [radius=2];
\node [above right] at (2,3.464) {$q$};
\draw[fill] (0,0) circle [radius=0.05];
\draw[fill] (1,1.732) circle [radius=0.05];
\draw[fill] (1,0) circle [radius=0.05];
\draw[fill] (2,0) circle [radius=0.05];
\node [below left] at (0,0) {0};
\node [above] at (1,-.12) {$x/2$};
\node [below right] at (2,0) {$x$};
\node [below right] at (2.828428,-2.828428) {$D$};
\draw [thick][->] (8,0) -- (-1.8,9.8/8);
\node [above left] at (5,-0.09295) {$\theta/2$};

\draw (0,0) --  (1.871 * 1.59636,1.871 * 0.802713);

\definecolor{redlight}{RGB}{250,150,150}
\definecolor{greendark}{RGB}{0,150,0}
\definecolor{bluelight}{RGB}{0,0,250}

\path [pattern=crosshatch dots,pattern color=blue] (1.59636,0.802713) arc (53.39:0:1) arc (-60:-32.25:2) -- cycle;
\path [pattern=crosshatch dots,pattern color=greendark] (1.59636,0.802713) --  (1.871 * 1.59636,1.871 * 0.802713) arc (-6.81: -32.25:2) -- cycle;
\path [pattern=crosshatch dots,pattern color=red] (0.619576,0.924812) arc (112.36:180:1) arc (-120:-162:2) -- cycle;

\node [below left] at (-.6,.8) {$R_1$};
\node [below right] at (2.4,.6) {$R_2$};
\node [right] at (2.9,1.0) {$R_3$};
\node [below left] at (1.93,-.807) {$F$};
\draw[fill] (2,2 * 1.732) circle [radius=0.05];

\draw[fill] (8,0) circle [radius=0.05];
\node [below] at (8,0) {$1/x$};

\draw[fill] (.619576,0.924812) circle [radius=0.05];
\draw[fill] (1.59636,0.802713) circle [radius=0.05];
\draw[fill] (-0.898,1.11402) circle [radius=0.05];
\node [above right] at (-0.99,1) {$w_2$};
\node [above] at (.6,0.87) {$w_1$};
\node [above] at (1.59636,0.802713) {$w_0$};

\node [above] at (-1.8,9.8/8) {$\ell$};



\end{tikzpicture}
\caption{}
  \label{fig:diagram}
\end{figure}

Our strategy is to bound the hyperbolic area of $S \cap A = R_1 \cup R_2$ from above by the sum of the hyperbolic area of $R_1$ and the hyperbolic area of $R_2 \cup R_3$.
First, however, we derive a bound on $\varphi_* := |\angle x 0 w_0|$ in terms of $\theta$ and $x$. 
 By the law of sines on the Euclidean triangle with vertices $w_0, x/2$ and $1/x$,
\[
\frac{\sin{(\theta/2 + 2 \varphi_*)}}{\frac{1}{x} - \frac{x}{2}} = \frac{\sin{\theta}}{\frac{x}{2}}
\]
and therefore 
\[ 
x^2 \sin{(\theta/2 + 2 \varphi_*)} = \sin{(\theta/2)} (2-x^2).
\] 
Applying the sine addition formula to the first term, we can arrange terms to obtain the following quadratic in $\sin( 2 \varphi_*):$ 
\[
  x^4 \sin^2(\theta/2) ( 1 - \sin^2( 2 \varphi_* ) ) 
  =
  \left( x^2 \cos(\theta/2) \sin(\theta/2) - \sin(\theta/2)( 2-x^2)\right)^2.
\]
Applying the quadratic formula and taking the smaller solution, since the larger solution corresponds to $| \angle x 0 w_1 |,$ we get \[ \sin{(2\varphi_*)} = \sin{(\theta/2)} \left( z \cos{(\theta/2)} - \sqrt{ 1 - z^2 \sin^2{(\theta/2)}} \right), \] where $z = \frac{2-x^2}{x^2} $. Now,
\begin{align*}
\sin{(2\varphi_*)}
&=  \sin{(\theta/2)} \left( z \cos{(\theta/2)} - \sqrt{ 1 - z^2 \sin^2{(\theta/2)}} \right)\\
&\leq \sin{(\theta/2)} \left( z \cos{(\theta/2)} - 1 + z^2 \sin^2{(\theta/2)} \right)\\
&= \sin{(\theta/2)} \cos{(\theta/2)} (z-1) + \sin{(\theta/2)} (\cos{(\theta/2)} - 1) + z^2 \sin^3{(\theta/2)}\\
 &\leq \sin{(\theta/2)} \cos{(\theta/2)} (z-1) + \sin{(\theta/2)} (-\sin^2{(\theta/2)}) + z^2 \sin^3{(\theta/2)}\\
&= \frac{1}{2} \sin{\theta} (z-1) + \sin^3{(\theta/2)} (z^2-1) \\
&\leq C_{\delta}'  \theta (1-x), \qquad \text{for $x > \delta.$}
\end{align*}
It follows that, for $x > \delta$, \begin{equation} \sin{(2\varphi_*)} \leq  C_{\delta}'  \theta (1-x) \label{phibound1} \end{equation}
for some constant $C_{\delta}'$ depending on $\delta$.   Hence, we can choose $\theta_{\delta}$ sufficiently small so that  \begin{equation} \sin{(2\varphi_*)} \leq \frac{2}{3} (1-x) \label{phibound} \end{equation} and  \begin{equation} \varphi_* \leq C_{\delta}'  \theta (1-x) \label{phibound2}. \end{equation}

\subsubsection*{The region $R_2 \cup R_3$.}

For each $0 \leq \varphi \leq \varphi_*$, consider the line through the origin that makes the angle $\varphi$ with the positive real axis. Let $p_1(\varphi)$ and $p_2(\varphi)$ be the points at which this line intersects the circle $F$ and the horocycle $H$, respectively.  Set $\ell_1(\varphi) = \left| p_1(\varphi) \right|$ and $\ell_2(\varphi) = \left| p_2(\varphi) \right|$.  Then the hyperbolic area of $R_2 \cup R_3$ is given by 
\begin{align*}
\VolH{(R_2 \cup R_3)} 
&= \int_0^{\varphi_*} \left(  \frac{2}{1 - (\ell_2(\varphi))^2} - \frac{2}{1 - (\ell_1(\varphi))^2} \right)  d\varphi\\
&= \int_0^{\varphi_*} \frac{2 \left((\ell_2(\varphi))^2 - (\ell_1(\varphi))^2 \right) }{ \left(1 - (\ell_1(\varphi))^2 \right) \left(1 - (\ell_2(\varphi))^2 \right)} d\varphi\\
&= \int_0^{\varphi_*} \frac{2 \left(\ell_2(\varphi) - \ell_1(\varphi) \right) }{ \left(1 - \ell_1(\varphi) \right) \left(1 - \ell_2(\varphi)\right)}  \frac{\ell_2(\varphi)+ \ell_1(\varphi) }{ \left(1 + \ell_1(\varphi) \right) \left(1 + \ell_2(\varphi)\right)} d\varphi\\
&= \int_0^{\varphi_*} \frac{4 \left(\ell_2(\varphi) - \ell_1(\varphi) \right) }{ \left(1 - \ell_1(\varphi) \right) \left(1 - \ell_2(\varphi)\right)} d\varphi.
\end{align*}
The following explicit formulas for $\ell_1$ and $\ell_2$ are easily verified:
\begin{align*}
\ell_1(\varphi) &= x \cos{\varphi},\\
\ell_2(\varphi) &= x \cos{\varphi} + \sqrt{1 - x^2} \sin{\varphi}.
\end{align*}
Hence
\[
\VolH{(R_2 \cup R_3)}
=
4 \int_0^{\varphi_*} \frac{ \sqrt{1-x^2} \sin{\varphi} d\varphi}{\left( 1-x \cos{\varphi}\right) \left( 1 - x \cos{\varphi} - \sqrt{1-x^2} \sin{\varphi} \right)}.
\]
Substituting $u = \cos{\varphi}$, we can rewrite this integral as
\[
\VolH{(R_2 \cup R_3)}
=
4 \int_{\cos{\varphi_*}}^{1} \frac{ \sqrt{1-x^2} du}{\left( 1-xu \right) \left( 1 - x u- \sqrt{1-x^2} \sqrt{1-u^2}  \right)}.
\]
Rationalizing the denominator, we get
\[
\VolH{(R_2 \cup R_3)}
=
4 \int_{\cos{\varphi_*}}^{1} \frac{ \sqrt{1-x^2} \left( 1 - xu + \sqrt{1-x^2}\sqrt{1-u^2} \right) du}{\left( 1-xu \right) \left(x-u \right)^2}.
\]
Since $\sqrt{1-x^2}\sqrt{1-u^2} \leq 1-xu$, we get that
\begin{align}
\nonumber
\VolH{(R_2 \cup R_3)}
&\leq\sqrt{1-x^2} \int_{\cos{\varphi_*}}^{1} \frac{ du}{\left(x-u \right)^2}\\
\nonumber&= 8 \sqrt{1-x^2}\left[ \frac{-1}{1-x} + \frac{1}{\cos{\varphi_*} - x} \right]\\
&= 8 \sqrt{1-x^2} \frac{1-\cos{\varphi_*}}{(1-x)\left(\cos{\varphi_*} - x \right)}. \label{lastline}
\end{align}
By (\ref{phibound}) above, we have 
\[ 
1 - \cos{\varphi_*} \leq \sin{(2\varphi_*)} \leq \frac{2}{3} (1-x),
\] 
and therefore 
\[ 
(\cos{\varphi_*} - x) = (\cos{\varphi_*} - 1) + (1-x) \geq \frac{1}{3} (1-x). 
\] 
Hence (\ref{lastline}) is at most
\begin{align*}
 \frac{24 \left(1-\cos{\varphi_*}\right)}{(1-x)^{3/2}}
&\leq  \frac{12 \varphi_* }{(1-x)^{3/2}}\\
&\leq 12 C_{\delta}'^2 \theta^2 (1-x^2)^{1/2} \qquad \text{by (\ref{phibound2})}\\
&\leq C_{\delta}'' \theta^2
\end{align*}
for some constant $C_{\delta}''$ depending on $\delta$.
We conclude that 
\begin{equation}
\VolH{(R_2 \cup R_3)} \leq C_{\delta}'' \theta^2. \label{R2R3} 
\end{equation}

\subsubsection*{The region $R_1$.}

Next, we consider the region $R_1$.  We claim that, for $\theta_{\delta}$ sufficiently small and $x > \delta$, the points $w_1$ and $w_2$ are contained in a ball of radius $1/\sqrt{2}$.  

First, if we let $t$ denote the unique point in the upper half-plane such that the line through $1/x$ and $t$ is tangent to $C$ at $t$, then \[\dE(0,w_1) \leq \dE(0,t) =  \frac{x}{\sqrt{2-x^2}}. \] Since the latter is monotonic and tends to $1/\sqrt{2}$ as $x \rightarrow 1$, we deduce that $w_1$ lies in the disk centered at the origin with radius $1/\sqrt{2}$.

Next, we can bound $\dE(0,w_2)$ from above by the length of the arc of the horocycle $H$ between $0$ and $w_2$. The latter is just $\pi \alpha$, where $\alpha$ is the angle of the arc.  By (\ref{phibound}), \[ 2 \theta \geq \alpha - \varphi_*  \geq \alpha - C_{\delta}' (1-\delta) \theta. \] Therefore, $\alpha$ is less than a constant (depending only on $\delta$) times $\theta$.  It follows that $\dE(0,w_2) \leq 1/\sqrt{2}$ for sufficiently small $\theta_{\delta}$. 

We conclude that, for $\theta_{\delta}$ sufficiently small and $x > \delta$, the points $w_1$ and $w_2$ are contained in a ball of radius $1/\sqrt{2}$, as claimed.  
Thus, 
\begin{align*}
\VolH{(R_1)} 
&= \int_{R_1} \frac{4r}{(1-r^2)^2} dr d\theta\\
&\leq 16 \int_{R_1} r dr d\theta\\
&= 16 \VolE{(R_1)}.
\end{align*}
Observe that $R_1$ is contained in the disk with center $1/x$ and radius $1/x + 1$.  In fact, $R_1$ is contained in the circular sector of this disk bounded by the ray $\ell$ and the real axis.  The latter sector has area $\frac{\theta}{4} \left( 1 + \frac{1}{x} \right)^2 $.  Hence, 
\begin{equation}
\VolH{(R_1)} \leq C_{\delta}''' \theta \label{R1} \end{equation}
where $C_{\delta}''' = \frac{1}{4} \left( 1 + \frac{1}{\delta} \right)^2$.

\vspace{5 mm}

Combining (\ref{R2R3}) and (\ref{R1}) proves Claim~\ref{claim:xdelta}, and therefore the proposition.

\subsection{Proof of Proposition \ref{prop:strongarea}}
\label{sec:final}
  Suppose that there is a constant $c > 0$ and a collection of Delaunay triangles $t_1,t_2, \ldots, t_k$ that are strongly connected to the origin and whose union $\cup_{i=1}^k t_i$ is simply connected for which
	\[
		\sum_{i=1}^k \VolH(t_i) \leq \tfrac12ck.
	\]
Let $\X$ be the vertex set of these triangles, and let $\ell = |\X|.$  As the complex formed by $t_1,t_2,\ldots, t_k$ is planar, we have from Euler's formula that $\ell \geq \tfrac 12 k.$
By Lemma~\ref{lem:trischeme2}, there is an ordering $\pi_\X$ of $\X$ and a triangulation scheme $f$ so that $(\pi_\X, f)$ is planar and Delaunay.  Further, we may take the ordering so that $x_1 = 0$ and $\{x_1,x_2,x_3\}$ are the vertices of $t_1 = \TFX[3].$  Finally, the condition on the sum of the area of triangles implies 
\begin{equation}
  \label{eq:contra}
  \sum_{i=3}^\ell \VolH(\TFX[i]) \leq \tfrac12ck \leq c\ell.
\end{equation}

This motivates the definition of the event $\mathcal{E}_{c,r,\ell}$ that there exists a triple $(\X,\pi_\X,f)$ where
\begin{itemize}
  \item $\X \subset \PPP \cap \HB(0,r)$ is a set containing $0$ with $|\X| = \ell,$
  \item $\pi_\X$ is an ordering of $\X$ putting $0$ first, and
  \item $f$ is a triangulation scheme
\end{itemize}
so that 
\begin{itemize}
  \item the pair $(\pi_\X, f)$ is planar and Delaunay,
  \item
\(
  \sum_{i=3}^\ell \VolH(\TFX[i]) \leq c\ell,
\) 
and
\item
  the diameter of $t_1$ is at most $\ell.$
\end{itemize}
To complete the proof, it suffices to show that there is a $c>0$ so that  
\(\Pr\left[\cup_{r=1}^\infty \mathcal{E}_{c,r,\ell}\right]\) is summable in $\ell.$  
Then by Borel-Cantelli, there is some random $\ell_0 < \infty$ larger than the diameter of all the Delaunay triangles incident to the origin so that for $\ell \geq \ell_0,$ 
\[
  \sum_{i=3}^\ell \VolH(\TFX[i]) \geq c\ell,
\]
from which follows Proposition~\ref{prop:strongarea}.

The following lemma therefore concludes the proof.
\begin{lemma}
  There are constants $c>0$ and $\delta$ so that for all $\ell \geq 3,$ 
  \[
    \Pr\left[
      \cup_{r=1}^\infty
      \mathcal{E}_{c,r,\ell}
    \right] \leq e^{-\delta \ell}.
  \]
  \label{lem:mule}
\end{lemma}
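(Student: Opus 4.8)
The plan is a first-moment estimate. Since $\mathcal E_{c,r,\ell}\subseteq\mathcal E_{c,r',\ell}$ for $r\le r'$ (a larger ball only makes more configurations available), $\Pr[\cup_{r}\mathcal E_{c,r,\ell}]=\lim_{r\to\infty}\Pr[\mathcal E_{c,r,\ell}]$, so it suffices to bound $\Pr[\mathcal E_{c,r,\ell}]$ by a quantity independent of $r$. I would bound this by the expected number of valid triples $(\X,\pi_{\X},f)$ with $\X\subseteq\PPP\cap\HB(0,r)$, and evaluate the expectation via the multivariate Mecke/Palm formula for $\PPP$ --- equivalently, condition on $|\PPP\cap\HB(0,r)|$, use that conditionally the points are i.i.d.\ uniform on $\HB(0,r)$, and that the number of $(\ell-1)$-subsets has first factorial moment $(\lambda\VolH(\HB(0,r)))^{\ell-1}$. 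What makes this go through is that for a \emph{fixed} point configuration each of the three defining conditions --- $(\pi_{\X},f)$ Delaunay (finite circumcircles), $\sum_{i=3}^{\ell}\VolH(\TFX[i])\le c\ell$, and $\operatorname{diam}(\TFX[3])\le\ell$ --- is deterministic, so no residual randomness remains in the integrand.

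Next, fix a triangulation scheme $f$. Since $x_1=0$, the diameter constraint forces the vertex $x_2$ of $\TFX[3]$ into $\HB(0,\ell)$; splitting off that part of the integral pulls out a factor $\VolH(\HB(0,\ell))\le\pi e^{\ell}$ by \eqref{eq:ballvol}. What is left, after normalising, is precisely the left-hand probability of Lemma~\ref{lem:domination} with $s=\ell$. So I would fix $\alpha>2$, let $\beta=\beta(\alpha)$ be the constant furnished by Lemma~\ref{lem:domination}, and apply that lemma to bound the normalised integral by $\Pr[\sum_{i=3}^{\ell}Z_i\le c\ell]/\VolH(\HB(0,r))^{\ell-2}$. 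The powers of $\VolH(\HB(0,r))$ then cancel against the Palm factor, leaving a bound of the shape $\lambda^{\ell-1}\pi e^{\ell}\,\Pr[\sum_{i=3}^{\ell}Z_i\le c\ell]$ per scheme --- already independent of $r$.

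The sum over triangulation schemes is the step I expect to be the real obstacle. The number of abstract schemes on $\ell$ vertices is super-exponential in $\ell$, whereas Lemma~\ref{lem:Ztail} only delivers a factor $e^{-M(\ell-2)}$, so a naive union bound over all schemes diverges. The remedy is to do the counting at the level of point configurations: for any fixed vertex set the number of \emph{planar} pairs $(\pi_{\X},f)$ is at most $(C\ell)^{\ell}$ by Lemma~\ref{lem:triangulationschemes}, and when this is paired with the $1/(\ell-1)!$ produced by working with \emph{unordered} $(\ell-1)$-subsets of $\PPP$ in the Palm step, the net combinatorial overcount is only an exponential factor $K^{\ell}$ for some $K=K(\lambda)$. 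Organising this cancellation while simultaneously keeping the per-scheme probability estimate of the previous paragraph is the delicate bookkeeping; granting it, one obtains
\[
\Pr\big[\textstyle\bigcup_{r}\mathcal E_{c,r,\ell}\big]\ \le\ K^{\ell}\lambda^{\ell-1}\pi e^{\ell}\,\Pr\big[\textstyle\sum_{i=3}^{\ell}Z_i\le c\ell\big].
\]

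Finally I would invoke Lemma~\ref{lem:Ztail}: choose $M$ large enough that $K\lambda e\,e^{-M}<e^{-1}$, let $\epsilon=\epsilon(\alpha,\beta,M)>0$ be the constant it provides, and \emph{define} $c:=\epsilon/3$; then $c\ell\le\epsilon(\ell-2)$ for every $\ell\ge3$, so $\Pr[\sum_{i=3}^{\ell}Z_i\le c\ell]\le e^{-M(\ell-2)}$. Substituting gives $\Pr[\cup_{r}\mathcal E_{c,r,\ell}]\le C'e^{-\ell}$ for all $\ell\ge3$ (absorbing the finitely many small $\ell$ into the constant), which is of the required form $e^{-\delta\ell}$ after shrinking $\delta$. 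Apart from the combinatorial bookkeeping, the only inputs are Lemmas~\ref{lem:domination} and~\ref{lem:Ztail} together with the Palm calculus for the Poisson process, all of which are routine here.
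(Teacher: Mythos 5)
Your proposal is correct and follows essentially the same route as the paper: a first-moment/Palm bound over $(\ell-1)$-subsets of $\PPP\cap\HB(0,r)$, the diameter constraint confining $x_2$ to $\HB(0,\ell)$ (the paper's $(N_\ell-1)$ factor), Lemma~\ref{lem:triangulationschemes} to reduce the union over schemes to $(C\ell)^\ell$ planar pairs per vertex set cancelled against the factorial from unordered subsets, Lemmas~\ref{lem:domination} and~\ref{lem:Ztail} for the per-scheme probability, and nestedness of the events to pass to the union. The ``delicate bookkeeping'' you defer is exactly the paper's conditional display and limit computation, so there is no gap.
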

\begin{proof}
  Combining Lemmas~\ref{lem:domination} and \ref{lem:Ztail} it follows that for any $M > 0$ there is an $\epsilon(M) > 0$ so that with $\X = \{ 0 = x_1, x_2, x_3,\ldots, x_\ell \}$ like in the statement of Lemma~\ref{lem:domination} we have for any fixed triangulation scheme $f$
  \begin{equation}
   	\Pr \left[
		\sum_{i=3}^\ell \VolH(\TFX[i]) \leq \epsilon(\ell-2)
		\text{ and $(\pi_{\X},f)$ Delaunay}
	\right]
	\leq
	\frac{e^{-M(\ell-2)}}{
		\VolH(\HB(0,r))^{\ell-2}
	}.
    \label{eq:epsi}
  \end{equation}

  Let $N_r = | \PPP \cap \HB(0,r)|$ and $N_\ell = | \PPP \cap \HB(0,\ell) |.$  By Lemma~\ref{lem:triangulationschemes} we have that for any set $\X \subset \PPP$ of size $\ell$ there are at most $(C\ell)^\ell$ pairs $(\pi_\X,f)$ that are planar and Delaunay. Thus, we have for any $r \geq \ell$
  \begin{equation}
    \Pr\left[
      \mathcal{E}_{c,r,\ell} ~|~ N_r, N_\ell
    \right]
    \leq (N_\ell-1) \binom{N_r - 2}{\ell - 2} (C\ell)^\ell 
	\frac{e^{-M(\ell-2)}}{
		\VolH(\HB(0,r))^{\ell-2}
	}.
    \label{eq:condi}
  \end{equation}

  It is easily checked that for $\PPP,$
  \[
    \lim_{r \to \infty} \Exp \left[ \frac{N_\ell N_r^{\ell - 2}}{
    \VolH(\HB(0,r))^{\ell-2}}\right] = \lambda^{\ell - 1} \VolH(\HB(0,\ell)).
  \]
  Hence taking expectations and limits in~\eqref{eq:condi}, we get
  \begin{align*}
    \lim_{r \to \infty}
    \Pr\left[
	\mathcal{E}_{c,r,\ell}
    \right]
    &\leq
    \lim_{r \to \infty}
    \frac{ (C\ell)^{\ell}e^{-M(\ell-2)}}{(\ell -2)!}
    \Exp \left[ \frac{N_\ell N_r^{\ell - 2}}{
    \VolH(\HB(0,r))^{\ell-2}}\right] \\
    &\leq
    \frac{ (C\ell)^{\ell}e^{-M(\ell-2)}\lambda^{\ell-1} \VolH(\HB(0,\ell))}{(\ell -2)!}.
  \end{align*}
  By making $M$ sufficiently large, this can be made smaller than some $e^{-\delta \ell}$ for all $\ell\geq 3.$ Note that since the events $\mathcal{E}_{c,r,\ell}$ are nested, the lemma follows from monotone convergence. 

\end{proof}


\section{Miscellaneous properties of $\HPV$ and $\HPD$}
\label{sec:other}

We begin with:
\begin{proof}[Proof of Proposition~\ref{prop:stationarity}]
  We will recall the notation from the statement of the proposition.
  Let $\mathbb{M}$ be a Riemannian symmetric space.  Let $\mathcal{X}$ be a Poisson point process with positive intensity $\lambda \cdot dV$ where $\lambda > 0$ and $dV$ denotes the volume element and which is conditioned to have a point at some fixed $x \in \mathbb{M}$.  Let $\mathscr{G}$ be the dual graph of the Voronoi tessellation, so that two nuclei $x,y \in \mathcal{X}$ are adjacent if and only if there is an open metric ball $B$ with $\{x,y\} \subset \partial B$ and $B \cap \mathcal{X} = \emptyset.$  Let $\rho$ denote the vertex of $\mathscr{G}$ whose Voronoi cell has nucleus $x.$  We will let $P$ denote the law on $(\mathscr{G},\rho)$ that arises as the push forward of the point process distribution, and we will let $Q$ be the law on rooted graphs with Radon-Nikodym derivative $\frac{dQ}{dP} = \frac{\deg \rho}{\Exp \deg \rho}.$  We will continue to use $\Pr$ to denote the law of the Poisson point process and $\Exp$ to denote expectation with respect to $\Pr.$

For any two nonequal points $y,z \in \mathbb{M}$ there is a geodesic $\gamma$ connecting them.  On this geodesic, we may find the midpoint $m$ between $y$ and $z.$   As $\mathbb{M}$ is a symmetric space, there is an isometry $\tau_{y,z}$ of $\mathbb{M}$ that interchanges $y$ and $z$ and fixes $m.$  Let $\mathbb{B}_{y,z}$ be the bisector of $y$ and $z,$ i.e.\,the submanifold of $\mathbb{M}$ consisting of points that are equidistant from $y$ and $z,$ and let $\mathcal{C}_{y,z}$ be the event that $\left\{ x,y \right\} \subset \mathcal{X}$ and $y$ connects to $z$ in $\mathscr{G}.$  For any bijection $\tau : \mathbb{M} \to \mathbb{M},$ let $\tau^* : \sigma(\mathcal{X}) \to \sigma(\mathcal{X})$ be the induced map, i.e.
  \[
    \tau^*(\left\{ \mathcal{X} \in V \right\}) = 
    \left\{ \tau(\mathcal{X}) \in V \right\}
  \]
  where $V$ is any subset of $\mathbb{M}^{\mathbb{N}}.$  As $\mathbb{B}_{x,y}$ is invariant under $\tau_{x,y}$ it follows that $\tau_{x,y}^*( \mathcal{C}_{x,y} ) = \mathcal{C}_{x,y}.$

  We begin by showing that $\Exp \deg \rho < \infty.$  If $\mathbb{M}$ is compact, there is nothing to do, as the space has finite volume and hence $|\mathcal{X}| < \infty$ almost surely.  Therefore, it suffices to consider the case that $\mathbb{M}$ is non-compact, in which case it decomposes as a Riemannian product $\mathbb{M}_1 \times \mathbb{M}_2$ where $\mathbb{M}_1$ is non-positively curved and $\mathbb{M}_2$ is compact (see \cite[Chapter 4]{Helgason} for an overview).  We will assume for simplicity that $\mathbb{M}_2$ is trivial so that $\mathbb{M} = \mathbb{M}_1$ is non-positively curved.  In the general case, the proof can be adapted by noting the projection of $\mathbb{M}$ onto $\mathbb{M}_1$ is a quasi-isometry.
  
  Let $f(r)=\VolM(\MB(x,r)),$ noting that the definition is independent of $x.$  As $\mathbb{M}$ is non-positively curved, we have that $f(r) \geq cr^d$ for some constant $c>0$ where $d$ is the dimension of $\mathbb{M}.$  By a standard packing argument, it is possible to find a $1$-net of the ball $\MB(x,r)$ of cardinality at most $ f(r)/f(1/2).$  Hence, by packing $\MB(x,r-1),$ we have that all of $\MB(x,r)$ is contained in the union of $2$-balls centered at points in the net, so that 
  \begin{equation}
    f(r) \leq f(r-1)f(2)/f(1/2).
    \label{eq:volrecurrence}
  \end{equation}
Note this implies that $f(r)$ grows at most exponentially.

Let $\Pr_{y}$ be the law of $\mathcal{X}$ conditioned to have points at both $x$ and $y.$ To show that $\Exp \deg \rho < \infty$ it will suffice to show that
\begin{equation}
  \Pr_y[ \mathcal{C}_{x,y}] \leq C\exp( - f(\dM(x,y)/2)/C)
  \label{eq:connectiondecay}
\end{equation}
for some constant $C,$ as having shown this it follows that
\begin{align*}
  \Exp \deg \rho
  &= \Exp \sum_{y \in \mathcal{X}} \one[x,y \text{ connected in } \mathscr{G}] \\
  &= \lambda \int_{\mathbb{M}} \Pr_y(\mathcal{C}_{x,y})\,dV(y) \\
  &\leq C\lambda \int_0^\infty \exp(-f(r/2)/C)f'(r)\,dr \\
  &<\infty.
\end{align*}

Turning to the proof of \eqref{eq:connectiondecay}, recall that $x$ and $y$ are connected if and only if there is some $u \in \mathbb{B}_{x,y}$ so that $\MB(u,\dM(u,x))$ contains no points of $\mathcal{X}.$  As balls are geodesically convex in a complete non-positively curved space (see \cite[Section 1.6]{Eberlein}), this ball also contains the midpoint $m$ of $x$ and $y.$  Hence $\dM(u,x) \geq \dM(u,m).$  Additionally, the bisector is disjoint from $\MB(x,\dM(x,m))$ and so $\dM(u,x) \geq \dM(x,m).$

Let $\mathcal{U}$ be a maximal $1$-separated subset of $\mathbb{B}_{x,y}.$  Note that if $\mathcal{C}_{x,y}$ occurs, then there is some $u \in \mathcal{U}$ so that $\MB(u,\dM(u,x)-1)$ contains no points of $\mathcal{X}.$  Hence we have the bound
\begin{equation}
  \Pr_y[ \mathcal{C}_{x,y}]
  \leq
  \sum_{u \in \mathcal{U}} \exp(-f(\dM(u,x)-1)).
  \label{eq:degunion}
\end{equation}

As all the $1/2$ balls around $\mathcal{U}$ are disjoint, we have that for any $r>0$
\[
  \left|
  \mathcal{U} \cap \MB(m,r)
  \right|f(1/2)
  \leq \VolM(\MB(m,r)) = f(r).
\]
Therefore, we may estimate \eqref{eq:degunion} by partitioning the points into annuli of radius $r-1$ to $r,$ which yields
\begin{align*}
  \Pr_y[ \mathcal{C}_{x,y}]
  &\leq \sum_{r=1}^\infty \frac{f(r)}{f(1/2)} \exp( -f( \max(\dM(x,m),r)-2)). 
\end{align*}
It suffices to estimate the sum under the additional assumption that $\dM(x,m) >1$ by adjusting constants in \eqref{eq:connectiondecay}.
Subdivide the sum according to $r < \dM(x,m)$ and $r > \dM(x,m).$  For $r < \dM(x,m),$ we have
\[
  \sum_{r=1}^{\lfloor \dM(x,m) \rfloor} \frac{f(r)}{f(1/2)}
  e^{ -f( \max(\dM(x,m),r)-2)}
\leq
\dM(x,m) \frac{f(\dM(x,m))}{f(1/2)} e^{- f( \dM(x,m) -2)}.
\]
As for $r > \dM(x,m),$ recalling that $f(r)$ grows at least polynomially large in $r,$ the sum is no more than some absolute constant times its first term.  Combining these cases, we get that
\[
  \Pr_y[ \mathcal{C}_{x,y}]
  \leq C \dM(x,m) f(\dM(x,m)) \exp (-f (\dM(x,m) - 2))
\]
for some absolute constant $C>0.$  Applying \eqref{eq:volrecurrence} to the exponent, the desired \eqref{eq:connectiondecay} therefore follows for some other sufficiently large constant $C>0.$

We now turn to the second claim, that $(\mathscr{G},X_0)$ is reversible under the law $Q$.
  Let $X_0 = \rho$ and let $X_1$ be simple random walk on $\mathscr{G}$ after one step.  We will show that
  \[
    (\mathscr{G},X_0,X_1) 
    \lawequals
    (\mathscr{G},X_1,X_0), 
  \]
  as distributions on birooted equivalence classes of random graphs.  For this purpose, it suffices to show that for any $r \geq 1,$ and any finite rooted graphs $(g,v)$ and $(h,w)$
  \begin{equation*}
    Q\left[ 
      B_{\mathscr{G}}(X_0,r) 
    \cong g 
  \text{ and }
      B_{\mathscr{G}}(X_1,r) 
    \cong h
\right]
    =
    Q\left[ 
      B_{\mathscr{G}}(X_0,r) 
    \cong h 
  \text{ and }
      B_{\mathscr{G}}(X_1,r) 
    \cong g
\right],
  \end{equation*}
  where $\cong$ denotes equality up to rooted isomorphism.  Equivalently, it suffices to show that
  \begin{equation}
    \label{eq:fixedg}
    \frac{P\left[ 
      B_{\mathscr{G}}(X_0,r) 
    \cong g 
  \text{ and }
      B_{\mathscr{G}}(X_1,r) 
    \cong h
  \right]}
  {
    P\left[ 
      B_{\mathscr{G}}(X_0,r) 
    \cong h 
  \text{ and }
      B_{\mathscr{G}}(X_1,r) 
    \cong g
  \right]}
  =\frac{\deg v}{\deg w}.
  \end{equation}
  
  For clarity, let $\pi : \vso(\mathscr{G}) \to \mathcal{X}$ be the embedding of the vertices of $\mathscr{G}$ into $\mathbb{M}.$  Note that $\pi$ can be taken to be a function of the rooted isomorphism class $(\mathscr{G},\rho).$  Let $\tau$ be shorthand for $\tau_{x,\pi(X_1)}$, let $\mathcal{X}' = \tau(\mathcal{X}),$ and let $\mathscr{G}'$ be the dual graph of the Voronoi tessellation with nuclei $\mathcal{X}'.$  Let $\rho'$ be the vertex of $\mathscr{G}'$ whose embedding is at $x.$  Then $(\mathscr{G}',\rho')$ is isomorphic to $(\mathscr{G},X_1)$ as rooted graphs.  

  Let $\mathcal{E} \in \sigma(\mathcal{X},X_1)$ be any event on which $\deg \rho$ and $\deg X_1$ are both almost surely constants.  
  Denote these by $d_1$ and $d_2$ respectively.  Note that on $\tau^*(\mathcal{E}),$ $\deg \rho = d_2$ and $\deg X_1 = d_1.$
  By \eqref{eq:fixedg}, it suffices to show that 
  \[
    d_2 \Pr\left[ \tau^*(\mathcal{E}) \right] = d_1 \Pr\left[ \mathcal{E} \right].
  \]
  Let $\mu$ be the marginal probability measure of $\pi(X_1)$ on $\mathbb{M}.$  As the law $\mathcal{X}$ can be viewed as a tight Borel measure on a complete separable metric space
  (the boundedly finite measures under vague convergence, see \cite[Appendix A2.6]{VereJones}), we have the existence of a regular conditional probability measure $\Pr\left[\, \cdot\, \middle\vert \pi(X_1)=y \right].$  In particular, we may write
  \[
\Pr\left[ \mathcal{E} \right]
=
\int_{\mathbb{M}} \Pr\left[ \mathcal{E} \middle\vert \pi(X_1)=y \right]\,d\mu(y).
  \]

  Let $\Pr_{y}$ be the law of $\mathcal{X}$ conditioned to have a points at both $x$ and $y.$  Note that for any $y$ we have that $\Pr_y[ \pi(X_1) = y] > 0.$  
  Further, by a standard limiting argument, it is easily verified that 
  \[
\Pr\left[ \mathcal{E} \middle\vert \pi(X_1)=y \right]
=\frac{\Pr_y\left[ \mathcal{E} \cap \left\{ \pi(X_1) = y \right\} \right] }
{ 
  \Pr_y\left[ \pi(X_1) = y \right]
}
=
\Pr_y\left[ \mathcal{E} \middle\vert \pi(X_1)=y \right].
  \]
  Using that the degrees of $X_0$ and $X_1$ are specified on $\mathcal{E},$
  \begin{align*}
    \Pr_y\left[ \tau^*(\mathcal{E}) \cap \left\{\pi(X_1)=y\right\} \right]
    &=
    \Pr_y\left[ \tau_{x,y}^*(\mathcal{E}) \cap \left\{\pi(X_1)=y\right\} \right] \\
    &=
    \Pr_y\left[ \tau_{x,y}^*(\mathcal{E}) \cap \mathcal{C}_{x,y}\right]\frac{1}{d_2}, \\
    \intertext{where we have used that $X_1$ is simple random walk.  As $\tau_{x,y}$ is an isometry that interchanges $x$ and $y,$ we have $\Pr_y \circ \tau_{x,y}^* = \Pr_y.$  As $\mathcal{C}_{x,y}$ is invariant under $\tau^*_{x,y}$, we have} 
    \Pr_y\left[ \tau^*(\mathcal{E}) \cap \left\{\pi(X_1)=y\right\} \right]
    &=
    \Pr_y\left[ \mathcal{E} \cap \mathcal{C}_{x,y}\right]\frac{1}{d_2}, \\
    &=
    \Pr_y\left[ \mathcal{E} \cap \left\{\pi(X_1)=y\right\}\right]\frac{d_1}{d_2}.
  \end{align*}
Hence, integrating out the conditioning, we have
  \begin{align*}
d_2 \Pr\left[ \tau^*(\mathcal{E}) \right]
&=\int_{\mathbb{M}} d_2 \Pr\left[ \tau^*(\mathcal{E}) \middle\vert \pi(X_1)=y \right]\,d\mu(y)\\
&=\int_{\mathbb{M}} d_1 \Pr\left[ \mathcal{E} \middle\vert \pi(X_1)=y \right]\,d\mu(y)\\
&=d_1 \Pr\left[ \mathcal{E}\right].
\end{align*}
  
  \end{proof}

\begin{proposition}
  For either $G=\HPV$ or $\HPD,$ $(G,0)$ is a random weak limit of finite graphs.
  \label{prop:sofic}
\end{proposition}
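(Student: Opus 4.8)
The plan is to realize $\HPV$ and $\HPD$ as Benjamini--Schramm limits of Poisson--Voronoi tessellations on a sequence of closed hyperbolic surfaces whose injectivity radius tends to infinity. Fix a closed hyperbolic surface and a sequence of finite covers $\Sigma_n$ with $\operatorname{inj}(\Sigma_n)\to\infty$ (such covers are classical, e.g.\ via residual finiteness of surface groups). On each $\Sigma_n$ sample a Poisson point process $\mathcal{X}_n$ of intensity $\lambda\cdot dA$, form its Voronoi tessellation and Delaunay triangulation on $\Sigma_n$, and let $G_n$ be the dual graph of whichever of the two we wish to approximate; $G_n$ is a.s.\ finite. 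Rooting $G_n$ at a uniformly random vertex, I claim that $(G_n,\mathrm{root})\to (G,0)$ in the local topology, which suffices since a random weak limit may equivalently be defined through random finite graphs.

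Two ingredients are needed. The first is a \emph{locality} (stabilization) statement: for every $r$, the rooted ball $B_G(0,r)$ --- the induced subgraph on the cells within graph distance $r$ of the cell at $0$, decorated by the positions of their nuclei --- is a.s.\ a measurable function of $\PPP\cap\HB(0,L)$ for some random radius $L=L_r$, and $L_r$ has a super-exponentially decaying tail. This is proved by iterating the estimates behind Lemma~\ref{lem:triangletail}: a Delaunay edge incident to a nucleus $p$ is witnessed by an empty hyperbolic circumdisk through $p$, and an empty disk of radius $a$ has probability $\exp(-2\pi\lambda(\cosh a-1))$; so with overwhelming probability all circumdisks witnessing Delaunay edges within graph distance $r$ of $0$, together with all Voronoi cells of nuclei within that distance, lie inside a ball of radius $O(r)$, whence no point of $\PPP$ outside a bounded region can alter $B_G(0,r)$. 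For $\HPD$ one argues identically, using that cells of the Voronoi tessellation near $0$ have the same super-exponential diameter tail.

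The second ingredient relates a uniform vertex of $G_n$ to $\PPP$ via the Mecke/Palm formula. For a bounded functional $F$ of the configuration in the $\HB(0,L)$-neighbourhood of a point, the Mecke equation gives
\[
  \Exp \sum_{\xi\in\mathcal{X}_n} F\big(\mathcal{X}_n\ \text{near}\ \xi\big)
  = \lambda\int_{\Sigma_n}\Exp\, F\big((\mathcal{X}_n\cup\{y\})\ \text{near}\ y\big)\,dA(y),
\]
and once $\operatorname{inj}(\Sigma_n)>L$ the ball $B_{\Sigma_n}(y,L)$ is isometric to $\HB(0,L)$, so the integrand equals $\Exp\,F(\PPP\ \text{near}\ 0)$ for every $y$. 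Dividing by the vertex count and using the law of large numbers together with concentration of this local empirical average (a Poincar\'e inequality for Poisson functionals), the empirical distribution of $F$ over the vertices of $G_n$ converges to $\Exp\,F(\PPP)$. Taking $F$ to be the indicator that the $\HB(0,L)$-determined version of $B_{G_n}(\xi,r)$ is isomorphic to a prescribed rooted graph, and letting first $n\to\infty$ and then $L\to\infty$ (so the truncation error from the first ingredient vanishes by its tail bound), gives the convergence $(G_n,\mathrm{root})\to(G,0)$. For $\HPD$ the same scheme applies with the uniform vertex being a uniform Delaunay triangle re-centred at its circumcentre, the Mecke computation being replaced by the Palm calculus of the point process of Delaunay circumcentres; this produces the version of $\HPD$ rooted as in Remark~\ref{rem:same}.

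I expect the main obstacle to be making the locality statement of the first ingredient fully rigorous, i.e.\ controlling the influence of distant points of $\PPP$: this requires summing the doubly-exponential emptiness bound over the a priori unboundedly many candidate witness disks and Voronoi cells reachable within $r$ graph steps from $0$. The remaining pieces --- existence of the surfaces $\Sigma_n$, the Mecke/Palm identifications, and the reduction from random to deterministic approximating graphs --- are routine.
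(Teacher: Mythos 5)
Your proposal is correct and follows essentially the same route as the paper: there, too, one builds Poisson--Voronoi tessellations on compact hyperbolic surfaces in which every ball of radius $2r$ is isometric to $\HB(0,2r)$ (obtained from a construction of Schmutz rather than from finite covers via residual finiteness), roots at a uniform vertex, and uses the empty-disk tail behind Lemma~\ref{lem:triangletail} to localize the $k$-ball to a bounded window. The only cosmetic difference is that you spell out the Palm/Mecke identification and the vertex-count normalization, where the paper invokes the resulting equality of the two localized probabilities directly.
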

\begin{remark}
  This was observed earlier by the first author and Oded Schramm, see~\cite[``Hyperbolic Surfaces'' proof of Theorem 6.2]{BenjaminiSchramm01}, before the notion of local limit was codified.  We elaborate on their idea here.
\end{remark}
\begin{proof}
  We will give the proof for $\HPV;$ the proof for $\HPD$ is identical. 
  Let $\{S_r\}_{r=1}^\infty$ be a family of compact Riemann surfaces so that $S_r$ has the property that any disk of radius $2r$ in $S_r$ is isometric to $\HB(0,2r) \subseteq \Htwo.$  Such a family of surfaces is known to exist, see~\cite[Proposition 1, Lemma 2]{Schmutz}.  Hence, on $S_r$ we can define a Poisson point process $\PPP_{S_r}$ whose intensity measure on any disk of radius $r$ is the pullback of the intensity of $\PPP$ on $\HB(0,2r).$  We can also associate to $\PPP_{S_r}$ its associated Voronoi tessellation, and we define $(G_r,\rho_r)$ to be the dual graph, where $\rho_r$ is a uniformly chosen vertex of $G_r.$  We claim that $(G,0)$ is the local limit of $(G_r,\rho_r),$ i.e. $(G,0)$ is the random weak limit of $G_r.$  To this end, let $G'_r$ be the induced subgraph of $G_r$ with vertices $\PPP_{S_r} \cap B_{S_r}(\rho_r, r).$ 

  As in the proof of Lemma~\ref{lem:triangletail}, the event that there is a vertex in $\PPP \cap \HB(0,r)$ which is in a triangle of diameter larger than $r$ is contained in the the event that there is empty disk of similar large radius.  In particular this event can be estimated by an event
  which is measurable with respect to $\PPP \cap \HB(0,2r).$  
Thus, the same argument shows that there is an event $\mathcal{E}_{S_r}$ for which
\[
  \left\{ \exists~v,w \in \PPP_{S_r} : d_{S_r}(\rho_r,v) \leq r, d_{S_r}(\rho_r,w) > 2r, \text{ and } d_{G_r}(v,w)=1 \right\} \subseteq \mathcal{E}_{S_r}
\]
that is measurable with respect to $\PPP_{S_r} \cap B_{S_r}(0,2r)$ and $\Pr(\mathcal{E}_{S_r}) = e^{-\omega(r)}.$ 

  
For any $r > 0,$ let $\PPP_r = \PPP \cap \HB(0,r),$ let $\HPV_r$ be the the induced subgraph of $\HPV$ on vertices $\PPP_r.$  Like with $\mathcal{E}_{S_r},$ we may find an event $\mathcal{E}_r$ so that
\[
  \left\{ \exists~v,w \in \PPP : \dH(0,v) \leq r, \dH(0,w) > 2r, \text{ and } d_{\HPV}(v,w)=1 \right\} \subseteq \mathcal{E}_{r}
\]
and $\Pr(\mathcal{E}_{r}) = e^{-\omega(r)}.$ 

For any finited rooted graph $(H,o)$ we thus have that
\[
  \Pr\left[ \left\{ (G_r',\rho_r) \cong (H,o) \right\} \cap \mathcal{E}_{S_r}^c \right]
  =
  \Pr\left[ \left\{ (\HPV_r,0) \cong (H,o) \right\} \cap \mathcal{E}_{r}^c \right].
\]
Observe that taking $r\to \infty,$ we have that      
\[
  \lim_{r \to \infty} B_{\HPV_r}(0,k) = B_{\HPV}(0,k) 
\]
almost surely, with the limit in the discrete topology.  Similarily, it is easy to see that $(G_r',\rho_r)$ contains $B_{G_r}(\rho_r, k)$ with probability going to $1$ as $r \to \infty$ for any fixed $k.$  Hence, we get that for any $k >0$ and any finite rooted graph $(H,o),$
\[
  \lim_{r\to\infty}
  \Pr \left[ B_{G_r}(\rho_r,k) \cong (H,o) \right] = 
  \Pr \left[ B_{\HPV}(0,k) \cong (H,o) \right],
\]
which completes the proof.
%


\end{proof}

\begin{proposition}
  For $G=\HPV,$  
  \[
    \limsup_{r\to \infty}|B_G(0,r)|^{1/r} < \infty,
  \]
  almost surely.
  \label{prop:expgrowth}
\end{proposition}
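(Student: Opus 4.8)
The plan is to show that, almost surely, there is a finite (random) constant $C$ with $B_G(0,r)\subseteq\HB(0,Cr)$ for all sufficiently large $r$, where $G=\HPV$. This suffices: the vertices of $G$ are exactly the points of $\PPP$, so $|B_G(0,r)|\le|\PPP\cap\HB(0,Cr)|$, and the latter is (up to the conditioned point at $0$) a Poisson variable with mean $2\pi\lambda(\cosh(Cr)-1)\le\pi\lambda e^{Cr}$, so a Poisson tail bound and Borel--Cantelli give $|\PPP\cap\HB(0,Cr)|\le e^{2Cr}$ for all large $r$ almost surely; since $\Exp\Deg\rho<\infty$ by Proposition~\ref{prop:stationarity}, $G$ is almost surely locally finite and $|B_G(0,r)|<\infty$ for every $r$, so one concludes $\limsup_{r\to\infty}|B_G(0,r)|^{1/r}\le e^{2C}<\infty$.

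To obtain $B_G(0,r)\subseteq\HB(0,Cr)$ eventually, I would first record a crude a priori bound: by Lemma~\ref{lem:triangletail} (and its translates via the Mecke formula, cf.\ Remark~\ref{rem:same}) together with Borel--Cantelli, almost surely for all large $r$ every Delaunay triangle with a vertex in $\HB(0,r^2)$ has diameter at most $r$, and an easy induction on $r$ then yields $B_G(0,r)\subseteq\HB(0,r^2)$. It therefore suffices to prove $\sum_r\Pr[\mathcal A_r]<\infty$, where, for a large absolute constant $C$ to be chosen, $\mathcal A_r$ is the event that some self-avoiding path $0=v_0,v_1,\dots,v_k$ in $G$ with $k\le r$ and contained in $\HB(0,r^2)$ satisfies $\sum_{i=1}^k\dH(v_{i-1},v_i)>Cr$. (If no vertex at hyperbolic distance $>Cr$ from $0$ lies in $B_G(0,r)$ then $B_G(0,r)\subseteq\HB(0,Cr)$, since $\dH(0,v_k)\le\sum_i\dH(v_{i-1},v_i)$ along any path.)

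I would estimate $\Pr[\mathcal A_r]$ by a first moment over such paths, computed with the Mecke formula. The mechanism is that making an edge long is enormously costly: if $v_{i-1}v_i$ is a Delaunay edge it lies on a Delaunay triangle whose circumdisk $D_i$ is empty and has radius at least $\tfrac12\dH(v_{i-1},v_i)$, hence hyperbolic area at least $2\pi(\cosh(\tfrac12\dH(v_{i-1},v_i))-1)$, which is exponentially large in $\dH(v_{i-1},v_i)$; so the probability weight attached to such an edge is doubly exponentially small in its length, whereas the number of ways of laying down a path of length $k$ is only singly exponential in $k$. Splitting the edges of a candidate path into those of length $<C/2$ and those of length $\ge C/2$, the former contribute at most $\tfrac C2 r$ to the total, so on $\mathcal A_r$ the long edges must carry total length exceeding $\tfrac C2 r$; since there are at most $r$ edges in all, a short computation --- using that the weight $\varepsilon_C:=\lambda\int_{C/2}^{\infty}e^{-2\pi\lambda(\cosh(\rho/2)-1)}\,2\pi\sinh\rho\,d\rho$ of a single long edge tends to $0$ as $C\to\infty$, that the weight of a short edge is bounded by $\Exp\Deg\rho<\infty$, a Chernoff bound for the total length of the long edges, and the trivial inequality $k\le r$ --- shows that for $C$ a sufficiently large absolute constant, $\Pr[\mathcal A_r]\le e^{-r}$ for all large $r$, which is summable.

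The main obstacle is making this last step rigorous, because the events $\{v_{i-1}v_i\text{ is an edge of }G\}$ are positively correlated (they are decreasing events of $\PPP$), so the probability that a path is present is \emph{not} bounded by the product of the edge probabilities. The remedy is to decorate each edge with an empty circumdisk $D_i$ through its endpoints (summing over the possible third vertices via the Mecke formula), so that the probability of a decorated configuration is exactly $e^{-\lambda\VolH(\bigcup_i D_i)}$ and the correlation disappears; the problem is thereby reduced to a lower bound of the form $\VolH(\bigcup_i D_i)\ge c_0\sum_i\dH(v_{i-1},v_i)$ for an absolute constant $c_0>0$ --- i.e.\ the union of the circumdisks along a path has hyperbolic volume at least a constant times the length of the path --- which is precisely where the geometry of $\Htwo$ must be handled with care, since balls in $\Htwo$ are not uniformly doubling and the usual Euclidean (Vitali-type) covering arguments do not transfer directly. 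The a priori confinement to $\HB(0,r^2)$ and a further application of Lemma~\ref{lem:triangletail} to exclude anomalously large circumdisks are what keep the resulting estimate under control.
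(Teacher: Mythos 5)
Your overall reduction --- show that almost surely $B_G(0,r)\subseteq\HB(0,Cr)$ for all large $r$ and then count Poisson points in $\HB(0,Cr)$ --- is exactly the architecture of the paper's proof (Lemma~\ref{lem:dcompare} plus the short deduction that follows it), and the a priori confinement to $\HB(0,r^2)$ via Lemma~\ref{lem:triangletail} is fine. The gap is in the step you yourself flag as the main obstacle, and it is not merely a technical loose end: the inequality $\VolH\bigl(\bigcup_i D_i\bigr)\ge c_0\sum_i\dH(v_{i-1},v_i)$ is \emph{false}. Take $k=r$ points inside a fixed ball $\HB(0,10C)$ forming a self-avoiding path with every step of length between $C$ and $2C$, and take each $D_i$ to be a circumdisk of radius just above $\dH(v_{i-1},v_i)/2$; then every $D_i$ lies in $\HB(0,12C)$, so $\VolH(\bigcup_i D_i)\le\pi e^{12C}$ is bounded while $\sum_i\dH(v_{i-1},v_i)\ge Cr\to\infty$. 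Since the Mecke integral ranges over all such configurations, the bound $e^{-\lambda\VolH(\cup_iD_i)}\le e^{-\lambda c_0\sum_i\rho_i}$ you want to insert in the integrand is simply unavailable, and no Vitali-type repair can produce it: the obstruction is that a path can accumulate arbitrary total length while remaining in a bounded region, so total length is the wrong quantity to charge volume against. (Relatedly, it is not clear that your event $\mathcal A_r$ even has summable probability, precisely because it includes such ``wandering'' paths.)

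What is true, and what the paper proves, is a volume lower bound in terms of the \emph{radial displacement} $\max_i\dH(0,y_i)$ rather than the total length. In the proof of Lemma~\ref{lem:dcompare} the path is filtered through its record times $t_j$ for the distance to the origin; for each record increment one extracts an empty disk $B_j$ of diameter comparable to $r_j-r_{j-1}$ lying in the annulus between radii $r_{j-1}$ and $r_j$. These disks are pairwise disjoint by construction (they live in nested annuli), so their diameters genuinely add up to a constant fraction of $\dH(0,x)$, and the resulting empty volume is compared to a Chernoff bound over a fixed grid of polar rectangles rather than to a per-path first moment. If you want to salvage your first-moment framework, you would need to replace the event $\{\sum_i\rho_i>Cr\}$ by $\{\dH(0,v_k)>Cr\}$ and charge volume only to the radially outward progress of the path --- at which point you have essentially rederived the paper's argument.
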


Proposition~\ref{prop:expgrowth} follows by a distance comparison.  Namely, the graph distance of  any nucleus to the origin is at least a fixed multiple of the hyperbolic distance, save for a finite number of exceptions.  To show this, we develop the following bound.
\begin{lemma}
  There are constants $\delta,\epsilon, r_0 > 0$ so that for all $r > r_0,$
  \[
    \Pr\left[
      \exists~x \in \PPP~:~r \leq \dH(0,x) < r+1 \text{ and } d_G(0,x) < \delta r
    \right] \leq e^{-\epsilon r}.
  \]
  \label{lem:dcompare}
\end{lemma}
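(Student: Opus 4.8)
The plan is to show that a path in $G=\HPV$ from the Voronoi cell containing $0$ to a cell at hyperbolic distance roughly $r$ must pass through many cells, because each cell has small hyperbolic diameter. Recall that a vertex of $\HPV$ is a Voronoi cell, geometrically realized at its nucleus, and two cells are adjacent exactly when they share a boundary edge; if cells with nuclei $p$ and $q$ are adjacent then the hyperbolic segment of $\partial V_p \cap \partial V_q$ lies in both cells, so $\dH(p,q)$ is at most $\operatorname{diam}_{\Htwo}(V_p) + \operatorname{diam}_{\Htwo}(V_q)$. Hence a $G$-path $0 = p_0, p_1, \ldots, p_m = x$ gives $\dH(0,x) \le \sum_{i=0}^{m}\operatorname{diam}_{\Htwo}(V_{p_i}) + \sum_{i=0}^{m}\operatorname{diam}_{\Htwo}(V_{p_i})$, so if every cell met by the path has hyperbolic diameter at most some constant $D$, then $m \ge \dH(0,x)/(2D) \ge \delta r$ with $\delta = 1/(2D)$ once $\dH(0,x)\ge r$.

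So the event in the lemma forces one of two things: either there is a nucleus $x$ with $r \le \dH(0,x) < r+1$ and a $G$-path of length $< \delta r$ from $0$ to $x$ all of whose cells have diameter $\le D$ — which is impossible by the previous paragraph for $\delta$ small enough — or some cell on such a short path has hyperbolic diameter exceeding $D$. A cell of diameter exceeding $D$ that is within graph distance $\delta r$ of $0$ is, by the diameter-comparison just used (applied with the crude bound $D$ replaced by any fixed value, say $1$, on the first few steps — more carefully, one bootstraps), contained in a hyperbolic ball $\HB(0, C r)$ for a fixed constant $C$ (any path of at most $\delta r$ cells stays within hyperbolic distance $\le 2\delta r \cdot D_{\max}$ of $0$, but since we cannot assume an a priori diameter bound we instead argue as follows). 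The clean way: let $\mathcal{E}_r$ be the event that some cell $V_p$ with nucleus $p \in \HB(0, 3r)$ has $\operatorname{diam}_{\Htwo}(V_p) > r/10$. By Lemma~\ref{lem:triangletail} (which controls the diameter of the union of Delaunay triangles at a point, equivalently, via the dual description, the size of the largest empty disk touching a cell, and which by Remark~\ref{rem:same} applies at any fixed point, here at $p$), together with a union bound over a $1$-net of $\HB(0,3r)$ of cardinality $O(e^{3r})$ and the fact that the number of nuclei in $\HB(0,3r)$ is Poisson with mean $O(e^{3r})$, we get $\Pr[\mathcal{E}_r] \le e^{-\omega(r)}$, which is certainly $\le e^{-\epsilon r}$ for large $r$. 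On the complement of $\mathcal{E}_r$, every cell within hyperbolic distance $3r$ of the origin has diameter $\le r/10$; an inductive argument along the path then shows that a $G$-path of length $\le \delta r$ starting at $0$ cannot leave $\HB(0,3r)$ (each step moves the nucleus by at most $2 \cdot r/10$, so $\delta r$ steps move it by at most $r/5 \cdot \delta r$, which is $< 2r$ for $\delta$ small, hence all cells on the path have diameter $\le r/10$), and therefore $\dH(0,x) \le (\delta r + 1) \cdot r/5 \cdot 2$... wait, this is not linear in $r$; instead use the sharper per-step bound: each step contributes $\operatorname{diam}_{\Htwo}(V_{p_{i-1}}) + \operatorname{diam}_{\Htwo}(V_{p_i})$, and we need a \emph{summable} bound, so take $D$ a fixed constant and instead define $\mathcal{E}_r$ to be the event that some cell with nucleus in $\HB(0, 3r)$ has diameter exceeding the fixed constant $D$; on $\mathcal{E}_r^c$ the path of $m \le \delta r$ cells has $\dH(0,x) \le 2mD \le 2\delta D r < r$ for $\delta < 1/(2D)$, a contradiction with $\dH(0,x)\ge r$. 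Thus the event of the lemma is contained in $\mathcal{E}_r$, and $\Pr[\mathcal{E}_r]\le e^{-\epsilon r}$.

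It remains only to bound $\Pr[\mathcal{E}_r]$, where $\mathcal{E}_r$ is the event that some Voronoi cell with nucleus in $\HB(0,3r)$ has hyperbolic diameter larger than the fixed constant $D$. Cover $\HB(0,3r)$ by a $1$-net $\mathcal{N}$ with $|\mathcal{N}| = O(e^{3r})$. For each $z \in \mathcal{N}$, on the event that there is a nucleus $p$ within distance $1$ of $z$ whose cell has diameter $> D$, there is a point of $\partial V_p$ at distance $> D/2 - 1$ from $p$, hence a point $w \in \Htwo$ (on that boundary edge, equidistant to $p$ and its neighbor) with $\HB(w, D/2 - 1) \cap \PPP = \emptyset$ and $\dH(z, w) \le \dH(z,p) + \dH(p,w) \le 1 + (\text{distance from }p\text{ to }w)$; re-expressed in the form of Lemma~\ref{lem:triangletail} this is a large empty disk near $z$, whose probability is $\le C e^{-\lambda \pi e^{(D/2-1)/4}/C}$ — a fixed constant that we take $< 1$ by choosing $D$ large, but for the union bound over $\mathcal{N}$ we actually need the diameter threshold to \emph{grow}. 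So the final, correct, setup: let $\mathcal{E}_r$ be the event that some cell with nucleus in $\HB(0,3r)$ has diameter $> \log r$ (say); then by Lemma~\ref{lem:triangletail} plus a union bound over the $O(e^{3r})$ net points and over the Poisson count, $\Pr[\mathcal{E}_r] \le C e^{3r} \cdot e^{3\log r/4 - \lambda\pi e^{(\log r)/4}} = e^{-\omega(r)}$; and on $\mathcal{E}_r^c$, every cell within graph distance $\delta r$ of $0$ (which by induction stays in $\HB(0,3r)$ as long as $\delta r \cdot \log r \ll r$, valid for large $r$) has diameter $\le \log r$, so $\dH(0,x) \le 2 \delta r \log r$, contradicting $\dH(0,x) \ge r$ once $\delta$ is small and $r$ large. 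Hence the event of the lemma lies in $\mathcal{E}_r$, giving the bound. The main obstacle is the bookkeeping that makes the ``short path stays in a bounded ball'' induction self-consistent with the diameter threshold — one must pick the threshold (here $\log r$) small enough that $\delta r$ steps cannot escape $\HB(0,3r)$, yet large enough that Lemma~\ref{lem:triangletail} still beats the $e^{3r}$ union bound; the stretched-exponential tail in Lemma~\ref{lem:triangletail} gives enormous room, so any slowly growing threshold works.
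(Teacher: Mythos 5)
There is a genuine gap, and it is exactly at the point you flag at the end as ``bookkeeping'': no choice of diameter threshold can make both halves of your argument work simultaneously. If the threshold is a fixed constant $D$, the geometric half is fine ($m\leq \delta r$ steps of size $\leq 2D$ cannot reach distance $r$ once $\delta<1/(2D)$), but the probabilistic half fails badly: $\HB(0,3r)$ contains order $e^{3r}$ Voronoi cells, each of which has probability bounded \emph{below} by a positive constant (depending on $D,\lambda$) of having diameter exceeding $D$, so your event $\mathcal{E}_r$ has probability tending to $1$, not to $0$. Indeed the largest cell in $\HB(0,3r)$ typically has diameter of order $\log r$, so the uniform bound you want is simply false with high probability. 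If instead the threshold grows, say equals $\log r$, the union bound works but the geometry breaks: a path of $m\leq\delta r$ steps each of size $\leq 2\log r$ can reach hyperbolic distance $2\delta r\log r$, which for any fixed $\delta>0$ \emph{exceeds} $r$ (and $3r$) for large $r$ — the inequality ``$\delta r\cdot\log r\ll r$, valid for large $r$'' in your write-up is false. So the containment of the lemma's event in $\mathcal{E}_r$ fails, and the proof does not close.

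The fix requires a different accounting: rather than bounding the maximum cell diameter, one must control the \emph{total} amount of empty space that a short path forces. The paper's proof extracts, from a geodesic path $0=y_0,\ldots,y_m=x$, a \emph{disjoint} family of empty disks $B_1,\ldots,B_{m'}$ (coming from the empty half-disks witnessing adjacency) arranged radially in annuli, with $\sum_i \diamH(B_i)\gtrsim r/2$ regardless of how the individual step sizes are distributed. It then covers $\HB(0,r+1)$ by a grid of polar rectangles of fixed hyperbolic size $q$, with $\ell\approx r/q$ annuli, and notes that the empty disks force at least $r/q-c_q m$ rectangles (one designated per annulus) to be empty of $\PPP$; if $m<\delta r$ with $\delta$ small this is at least $\ell/2$. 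A Chernoff bound for one fixed choice of rectangles, combined with a union bound over the $O_q(e^r)$ ways of designating one rectangle per annulus, shows this has probability at most $e^{-\epsilon r}$. The essential idea you are missing is that a few very large steps are permitted — they are just very expensive in empty volume, and it is the aggregate empty volume, localized into a grid, that is exponentially unlikely.
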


\begin{proof}[Proof of Lemma~\ref{lem:dcompare}]
  Let $q > 1$ be a parameter to be determined later.  For each integer $j$ with $1 \leq j \leq \ell \Def \lceil (r+1)/q \rceil,$ define the annulus 
  \[
    A_j = \HB(0, j q) \setminus \overline{ \HB(0, (j-1)q) }.
  \]
  Subdivide each annulus $A_j$ into $\lceil \sinh(jq) \rceil$ equally sized Euclidean polar rectangles.  Let $\mathcal{D}$ be this collection of rectangles, and let $S_\ell$ be the set of all $\ell$-tuples $(d_i)_{i=1}^\ell$ of rectangles in $\mathcal{D}$ with the property that $d_i \subset A_i.$

  Let $\mathcal{E}$ be the event
  \begin{equation}
    \mathcal{E} = \left\{
      \exists\,(d_j)_1^\ell \in S_\ell~:~ | j : d_j \cap \PPP = \emptyset | \geq \ell/2 
    \right\}.
    \label{eq:gridevent}
  \end{equation}
  We claim that we can find some $q$ and $r_0$ large and some $\epsilon >0$ small so that with $r > r_0$
  \begin{equation}
    \Pr\left[
      \mathcal{E}
    \right] \leq e^{-\epsilon r}.
    \label{eq:gridbound}
  \end{equation}
  Set $p = \exp(-\lambda \min_{d \in \mathcal{D}} \Vol(d)).$  It can be verified that $\min_{d \in \mathcal{D}} \Vol(d) = \Omega(q).$ 
  By Chernoff bounds, for any fixed sequence $(d_j)_1^\ell \in S_\ell,$ 
  \begin{equation}
    \Pr\left[
      | j : d_j \cap \PPP = \emptyset | \geq \ell p + \ell/3 
    \right] \leq \exp\left( - \frac{\ell}{18 p} \right).
    \nonumber
  \end{equation}
  Hence, increasing $q$ so that $p < \tfrac 16$ and so that $1/(18pq) > 1+\epsilon,$ we have that
  \begin{equation}
    \Pr\left[
      | j : d_j \cap \PPP = \emptyset | \geq \ell/2 
    \right] 
    \leq \exp\left( - \frac{\ell}{18 p} \right)
    \leq \exp\left( -(1+\epsilon) r \right),
    \label{eq:chernoff}
  \end{equation}
  provided that $r$ is taken sufficiently large.
  It is easily verified that $|S_\ell| = O_q(e^r),$ and so \eqref{eq:gridbound} follows from a union bound.  

  Now suppose that $x \in \PPP$ is any point with $ r \leq \dH(0,x) < r+1.$  Let $0 = y_0, y_1,y_2, \ldots , y_m = x$ be the nuclei that appear in order in a path in $G$ from $0$ to $x$ that achieves the graph distance.  Recall that for any pair of Voronoi nuclei $y,z \in \PPP$ that are adjacent, there is a hyperbolic ball $B$ centered on the bisector of $y$ and $z$ and containing the geodesic between $y$ and $z$ so that $B \cap \PPP = \emptyset.$  All such hyperbolic balls contain one of two half-disks centered at $m$ of diameter $\dH(y,z)$ on either side of the geodesic from $y$ to $z,$ and hence one of these is empty as well.
  We will use this property to show that if $m$ is too small, there is a sequence $(d_i)_{i=1}^\ell \in S_\ell$ with too many empty rectangles. 

  Let $t_1, t_2, \ldots, t_{m'}$ be defined inductively by $t_1 = 1$ and 
  \[
    t_j = \min \{ i > t_{j-1}~:~ \dH(0,y_{i}) > \dH(0,y_{t_{j-1}}) \}.
  \]
  Let $r_i = \dH(0,y_{t_i}),$ so that $r_i$ form a strongly increasing sequence.  
  We will construct a disjoint family of disks $B_1, \ldots, B_{m'}$ with $m' \leq m$ so that 
  \begin{enumerate}
    \item $\sum_{i=1}^{m'} \diamH( B_i ) = \frac{1}{2}\max_i \dH(0,y_i),$
    \item $B_i$ is contained in the annulus centered at $0$ of outer radius $r_i$ and inner radius $r_{i-1}.$ 
    \item For all $1 \leq i \leq m',$ $B_i \cap \PPP = \emptyset.$
  \end{enumerate}
  For each $i \geq 1,$ we define $B_i$ as follows.  Consider the hyperbolic geodesic $g$ connecting $y_{t_i}$ and $y_{t_i - 1}.$  From the construction of $t_i,$ $y_{t_i - 1} \in \HB(0, r_{i-1}).$  Hence $g$ crosses $\partial \HB(0, (r_i + r_{i-1})/2)$ at some point $z.$  Let $V$ be the hyperbolic disk with diameter given by $g,$ and let $B$ be the disk $\HB(z, (r_i - r_{i-1})/2).$  Let $V'$ one of the half disks of $V$ on one of the sides of $g$ that contains no points of $\PPP.$  As $g$ contains a diameter of $B,$ $V' \cap B$ is also a half disk that no points of $\PPP.$  Within this half disk, there is a unique disk of half the diameter of $B,$ centered on the bisector of $y_{t_i-1}$ and $y_{t_i}.$  Let $B_i$ be this disk.  It is easily checked that all three properties hold for these choices of $B_i.$ 

  For any disk $B \subset \HB(0,r+1)$ we can look at the collection of recantangles $D(B) \subset \mathcal{D}$ that intersect the ray from $0$ to $z$ that are contained in $B.$  As these rectangles have radial hyperbolic diameter $q$ and each is contained in a $1$-neighborhood of some ray from $0$ to $\infty,$ we have there is some constant $c_q > 0$ so that $|D(B)| > \diamH(B)/q - c_q.$  
  
  Applying this construction to every ball $B_1, B_2, \ldots B_{m'},$ we have the existence of a sequence $(d_i)_{i=1}^\ell \in S_\ell$ so that at least
  \[
    \sum_{i=1}^{m'} (\diamH(B_i) / q - c_q) \geq r / q - c_qm
  \]
  of these balls are empty.  Hence on the complement of the event $\mathcal{E}$, we must have that $c_qm \geq r/2q.$  Taking $\delta < 1/2c_qq,$ the lemma therefore follows from~\eqref{eq:gridbound}.
\end{proof}

\begin{proof}[Proof of Proposition~\ref{prop:expgrowth}]

  Applying Borel-Cantelli together with the bound in {Lemma~\ref{lem:dcompare}}, we have that there is an $r_0 < \infty$ random and a $\delta >0$ so that for all $x \in \PPP$ with $\dH(0,x) > r_0,$
  \[
  d_G(0,x) \geq \delta \dH(0,x).
\]
The number of Voronoi cells intersecting $\HB(0,r_0)$ is almost surely finite, and thus there is an $r_1 > 0$ so that if $x \in \PPP$ satisfies $d_G(0,x) > r_1,$ then $\dH(0,x) > r_0.$  
Thus, for all $r > r_1$ the ball $B_G(0,r)$ is embedded in $\HB(0, r/\delta).$  

The number of Poisson points in $\HB(0, r/\delta)$ satisfies
\[
  \lim_{r \to \infty} \frac{|\PPP \cap \HB(0,r/\delta)|}{\VolH( \HB(0,r/\delta) ) } = \lambda
\]
almost surely.  Therefore,
\[
  \limsup_{r \to \infty} \frac{|B_G(0,r)|}{\VolH( \HB(0,r/\delta) ) } \leq \lambda,
\]
and the claim follows.
\end{proof}

\begin{proposition}
  With $G = \HPV$ or $\HPD,$ for almost evey realization of $G$, simple random walk started at $0$, considered as a process in the Poincar\'e disk, converges almost surely in the topology of $\C$ to a point on $S^1.$
  \label{prop:S1convergence}
\end{proposition}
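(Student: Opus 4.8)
The plan is to work directly with the embedding of $G$ into the Poincar\'e disk and to show that the embedded walk positions, which I also denote $X_n$, form a Cauchy sequence in the Euclidean metric whose limit lies on $S^1$; the mechanism is that the direction $\Theta_n\in S^1$ of $X_n$ as seen from $0$ has summable increments, because the walk escapes to hyperbolic infinity fast enough that a bounded step near $X_n$ subtends an exponentially small visual angle. First I would collect the consequences of positive speed: $G$ has positive anchored expansion (Theorem~\ref{thm:hpv}), at most exponential growth (Proposition~\ref{prop:expgrowth}, or trivially for $\HPD$), and is stationary (Proposition~\ref{prop:stationarity}), so Theorem~\ref{thm:speed} gives $s>0$ with $d_G(0,X_n)/n\to s$ and in particular transience. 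Since any compact subset of $\Htwo$ contains only finitely many embedded vertices (the point process and the Voronoi complex are locally finite a.s.), transience forces the trajectory to leave every compact set, so $\dH(0,X_n)\to\infty$ almost surely.

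The key additional input is a reverse form of the distance comparison behind Lemma~\ref{lem:dcompare}: almost surely there are constants $C,r_0$ so that $d_G(0,x)\le C\,\dH(0,x)^{5}$ for every vertex $x$ with $\dH(0,x)\ge r_0$ (the exponent is immaterial — any fixed power works). To prove it I would follow the hyperbolic geodesic $[0,x]$: by convexity of Voronoi cells it meets each cell in a single subarc, and (avoiding the countably many Voronoi vertices, an almost sure event) successive cells it meets share an edge, so the number of cells it crosses bounds $d_G(0,x)$. On the event that every cell meeting $\HB(0,\dH(0,x)+1)$ has hyperbolic diameter at most $M\asymp\log\dH(0,x)$ — whose complement is exponentially small in $\dH(0,x)$ by the empty-disk estimate behind Lemma~\ref{lem:triangletail}, once $M$ is taken with a large enough constant — the nuclei of the crossed cells lie in the $M$-neighborhood of $[0,x]$, of hyperbolic area $O(\dH(0,x)\,e^{M})=O(\dH(0,x)^{5})$, so a Poisson tail bound makes the number of crossed cells $O(\dH(0,x)^{5})$ off an event of probability $e^{-\Omega(\dH(0,x)^{5})}$; summing these failures over the at most exponentially many vertices at each integer distance, and over all distances, Borel--Cantelli gives the claim. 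Combined with positive speed this yields $\dH(0,X_n)\ge c\,n^{1/5}$ for all large $n$.

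For the angular step, note the edges of $G$ are hyperbolic geodesic segments, so $\dH(X_{n-1},X_n)=L_n$, the length of the $n$th edge; by the same empty-disk tails and Borel--Cantelli, $L_n=O(\log\log n)$, hence $L_n\ll\dH(0,X_{n-1})$ eventually. Every point of the $n$th edge is then at hyperbolic distance at least $\dH(0,X_{n-1})-L_n$ from $0$, so in geodesic polar coordinates about $0$ (where $ds^2=dr^2+\sinh^2 r\,d\theta^2$) the $\theta$-variation along that edge — which dominates $|\Theta_n-\Theta_{n-1}|$ — is at most $L_n/\sinh\!\big(\dH(0,X_{n-1})-L_n\big)\le C\,(\log n)^{4}e^{-c\,n^{1/5}}$, which is summable. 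Thus $\Theta_n\to\theta_\infty$; put $\xi=e^{i\theta_\infty}\in S^1$. Finally, for $n$ large $|X_n-\xi|\le\big(1-|X_n|\big)+\big|X_n/|X_n|-\xi\big|\le 2e^{-\dH(0,X_n)}+|\Theta_n-\theta_\infty|\to0$, so $X_n\to\xi$ in $\C$. The argument is identical for $\HPV$ and $\HPD$ (for the latter one reads "cell" with the roles of the Voronoi and Delaunay complexes interchanged, or simply invokes that $\HPD$ is planar of bounded degree).

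I expect the reverse distance comparison of the second paragraph to be the main obstacle: everything else is either quoted (positive speed) or elementary hyperbolic trigonometry, but getting a lower bound on $\dH(0,X_n)$ that beats $\log n$ — which is exactly what rules out the walk "spiralling" at logarithmic radius — requires this polynomial comparison, whose proof is a somewhat more delicate cousin of Lemma~\ref{lem:dcompare} since the cell-diameter threshold $M$ must be allowed to grow with the radius in order for the union bound over nuclei to close.
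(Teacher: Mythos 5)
Your proposal is correct, and its core mechanism is the same as the paper's: the angular increments of the embedded walk are summable because the walk escapes to hyperbolic infinity at a quantitative rate while the embedded edge lengths near $X_n$ are only logarithmic in the radius; the paper packages the visual-angle estimate as $|\theta(X_{k+1})-\theta(X_k)|\le c_2\,e^{-\dH(0,X_k)+\dH(X_k,X_{k+1})/2}$ via the hyperbolic law of cosines, which is your $L_n/\sinh(\cdot)$ bound in different clothing. The substantive difference is at exactly the step you flag as the crux. The paper passes from $d_G(0,X_k)\ge ck$ to $\dH(0,X_k)\ge c'k/\log k$ with the words ``it follows,'' but the distance comparisons it actually proves (Lemma~\ref{lem:dcompare} and the containment $B_G(0,r)\subseteq\HB(0,r/\delta)$ used for Proposition~\ref{prop:expgrowth}) run in the opposite direction --- they bound $\dH$ above by $d_G$ --- and the edge-length bound \eqref{eq:distortion} only yields upper bounds on $\dH(0,X_k)$ along the walk. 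What is needed is an upper bound on $d_G(0,x)$ in terms of $\dH(0,x)$, and your reverse comparison $d_G(0,x)\le C\,\dH(0,x)^5$ --- crossing geodesically convex Voronoi cells along $[0,x]$, forcing all cells meeting $\HB(0,\dH(0,x)+1)$ to have diameter $O(\log \dH(0,x))$ off a superexponentially small event, and counting nuclei in the resulting neighborhood of the geodesic --- supplies precisely that; your union bound over the exponentially many vertices per annulus does close because the empty-disk tail is $\exp(-c\,\dH(0,x)^{\Omega(1)})$ once the constant in $M$ is large. The resulting $\dH(0,X_n)\gtrsim n^{1/5}$ is weaker than the paper's asserted $k/\log k$ but more than enough, since any escape rate beating $(1+\epsilon)\log n$ makes the angular series converge. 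Two harmless slips: the empty-disk tails give $L_n=O(\log\dH(0,X_{n-1}))=O(\log n)$ rather than $O(\log\log n)$, since the relevant radius is polynomial in $n$; and for $\HPD$, ``planar of bounded degree'' does not by itself yield the reverse distance comparison, but your interchanged-roles argument does, Delaunay triangles being likewise geodesically convex. Neither affects summability or the conclusion.
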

\begin{proof}
  The proof here is a small modification of \cite[Theorem 4.1]{BenjaminiSchramm01}.
  For any point $z \in \C,$ we let $\theta(z) = \frac{z}{|z|}$ be the corresponding point in $S^1.$
  From the hyperbolic law of cosines, we can see that
  there are absolute constants $c_1,c_2 > 0$ so that
  for any two points $p,q \in \Htwo$ with $\dH(p,q) \leq \dH(q,0)$ with $\dH(q,0) \geq c_1,$
  \[
    \left| \theta(p) - \theta(q)\right|
    \leq c_2 e^{-\dH(0,q) + \dH(p,q)/2}.
  \]

  By Lemma~\ref{lem:triangletail} and Borel-Cantelli, we can show that for $G = \HPV$ or $G= \HPD,$ 
    \begin{equation}
      \label{eq:distortion}
    M = \sup_{x \in \PPP} \max_{ \substack{y \in \PPP,  \\ d_{G}(x,y) = 1}} \frac{ \dH(x,y)}{\log(2+\dH(0,x))} < \infty
  \end{equation}
  almost surely. 
  From the almost sure positive speed of $X_k,$ we have that there is a $c>0$ so that $d_G(X_k,0) \geq ck$ for all $k$ sufficiently large.  
  It follows that $\dH(X_k,0) \geq c' k/\log(k)$ for some other $c' > 0$ and all $k$ sufficiently large.  Hence, we get the estimate that
    \begin{align*}
    \left| \theta(X_{k+1}) - \theta(X_k)\right|
    &\leq \sup_{r > c' k/\log(k)} c_2 e^{-r + M\log(2+r)/2} \\
    &\leq c_2 e^{-c' k / \log(k) + M\log(2+k)/2}
  \end{align*}
  for all $k$ sufficiently large.  This is summable in $k$, and hence $\theta(X_k)$ converges almost surely.  As $\dH(X_k,0) \to \infty$ as well, the proof is complete.
\end{proof}

\section*{Acknowledgements.}
We would like to thank Pablo Lessa and Matias Piaggio for sharing an advance version of his work~\cite{Lasso} and for pointing out an error in an earlier draft.

\bibliographystyle{alphaabbr}
\bibliography{anchored}

\end{document}